\newtheorem{theorem}{Theorem}[section]
\newtheorem{remark}{Remark}[section]
\newtheorem{lemma}{Lemma}[section]
\newtheorem{proposition}{Proposition}[section]
\newtheorem{corollary}{Corollary}[section]
\newtheorem{definition}{Definition}[section]
\numberwithin{equation}{section}	
\title[Fractional Kirchhoff Problem]{Nehari manifold approach for fractional Kirchhoff problems with extremal value of the parameter}
\author[Pawan Kumar Mishra]{P. K. Mishra}
\address[Pawan Kumar Mishra]{\newline\indent
	Department of Mathematics
	\newline\indent 
	Indian Institute of Technology Bhilai
	\newline\indent
	492015, Raipur, Chhattisgarh, India}
\email{pawan@iitbhilai.ac.in}
\author[Vinayak Mani Tripathi]{V. M. Tripathi}
\address[Vinayak Mani Tripathi]{\newline\indent
	Department of Mathematics
	\newline\indent 
	Indian Institute of Technology Bhilai
	\newline\indent
	492015, Raipur, Chhattisgarh, India}
\email{vinayakm@iitbhilai.ac.in}
\subjclass[2010]{35R11, 35A15, 49J35}
\keywords{Nehari manifold, variational methods, extremal parameter,   concave-convex, multiplicity}
\begin{document}
	\begin{abstract}
		\noindent In this work we study the following  nonlocal problem 
		\begin{equation*}
			\left\{
			\begin{aligned}
				M(\|u\|^2_X)(-\Delta)^s u&= \lambda {f(x)}|u|^{\gamma-2}u+{g(x)}|u|^{p-2}u &&\mbox{in}\ \ \Omega, \\
				u&=0                                   &&\mbox{on}\ \ \mathbb R^N\setminus \Omega,
			\end{aligned}
			\right.
		\end{equation*}
		where  $\Omega\subset \mathbb R^N$ is open and bounded with smooth boundary,  $N>2s, s\in (0, 1), M(t)=a+bt^{\theta-1},\;t\geq0$ with $ \theta>1, a\geq 0$ and $b>0$. The exponents satisfy $1<\gamma<2<{2\theta<p<2^*_{s}=2N/(N-2s)}$ (when $a\neq 0$) and $2<\gamma<2\theta<p<2^*_{s}$ (when $a=0$). The parameter $\lambda$ involved in the problem is real and positive.  The problem under consideration has nonlocal behaviour due to the presence of nonlocal fractional Laplacian operator as well as the nonlocal Kirchhoff term $M(\|u\|^2_X)$, where  $\|u\|^{2}_{X}=\iint_{\mathbb R^{2N}} \frac{|u(x)-u(y)|^2}{\left|x-y\right|^{N+2s}}dxdy$. The weight functions $f, g:\Omega\to \mathbb R$  are continuous, $f$ is positive while $g$ is allowed to change sign. In this paper an extremal value of the parameter, a threshold to apply Nehari manifold method, is characterized variationally for both degenerate and non-degenerate Kirchhoff cases to show an existence of at least two positive solutions  even when $\lambda$ crosses the extremal parameter value by executing fine analysis based on fibering maps and Nehari manifold.
		
		\keywords{Nehari manifold (primary) \and variational methods \and extremal parameter \and  concave-convex \and multiplicity}

		
	\end{abstract} 

\maketitle
	\section{Introduction}
	In the paper, we study the following nonlocal problem
	\begin{equation}\label{p}
		\tag{$P_{\lambda}$}
		\left\{
		\begin{aligned}
			M(\|u\|^2_X)(-\Delta)^s u&= \lambda {f(x)}|u|^{\gamma-2}u+{g(x)}|u|^{p-2}u &&\mbox{in}\ \ \Omega, \\
			u&=0                                   &&\mbox{on}\ \ \mathbb R^N\setminus \Omega,
		\end{aligned}
		\right.
	\end{equation}
	where  $\|u\|^{2}_{X}=\iint_{\mathbb R^{2N}} \frac{|u(x)-u(y)|^2}{\left|x-y\right|^{N+2s}}dxdy$,  $\Omega\subset \mathbb R^N$ is open and bounded with smooth boundary,  $N>2s, s\in (0, 1), M(t)=a+bt^{\theta-1},\;t\geq0$ with $ p/2>\theta>1, a\geq 0$ and $b>0$. The exponents satisfy $1<\gamma<2<{2\theta<p<2^*_{s}}$ (when $a\neq 0$) and $2<\gamma<2\theta<p<2^*_{s}$ (when $a=0$), where  $2^*_{s}=2N/(N-2s)$ is fractional Sobolev critical exponent. Here $(-\Delta)^{s}$ is well know fractional Laplacian operator which is defined, up to a normalization constant, as
	$$(-\Delta)^s \varphi(x)=\int_{\mathbb{R}^N}\frac{2\varphi(x)-\varphi(x+y)-\varphi(x-y)}{|y|^{N+2s}}dy,\quad x\in\mathbb R^N,$$
	for any $\varphi\in C_0^\infty(\Omega)$. The weight functions $f, g:\Omega\to \mathbb R$  are continuous and  satisfy  following assumptions:
	\begin{itemize}
		\item [(F)]  $f\in C(\Omega)\cap L^\infty(\Omega) $ and there exists some $f_0>0$ such that $f(x)>f_0$ for all $x$ near $\partial \Omega$, 
		\item [(G)] $g\in C(\Omega)\cap L^\infty(\Omega)$ with $0\not\equiv g^+=\max\{0, g(x)\}.$ 
	\end{itemize}
	
	The Kirchhoff term is called degenerate if $a=0$ and non-degenerate otherwise. The problem is non-local due to the presence of the operator  formed by the nonlocal Kirchhoff term and fractional Laplacian operator. The nonlocal nature of the operator does not allow us to compare the equation in \eqref{p} pointwise.  The	problem \eqref{p} has a variational structure and the suitable functional space (inspired from \cite{Val1})  to look for solutions can be 
	$$
	X=\big\{u\in H^s(\mathbb R^N):\,\,u=0\mbox{ a.e. in } \mathbb R^N\setminus \Omega\big\},
	$$
	where $H^s(\mathbb R^N)$ is the fractional Sobolev space (see \cite{Hitchhker} for more details). We recall that $X$ is a Hilbert space endowed with the following norm
	\begin{equation*}
		\left\|u\right\|_{X}:=\left(\iint_{\mathbb R^{2N}} \frac{|u(x)-u(y)|^2}{\left|x-y\right|^{N+2s}}dxdy\right)^{1/2}.
	\end{equation*}
	\begin{definition}\label{def}
		A function $u\in X$ is said to be a (weak) solution of \eqref{p} if for every $\phi\in X$ 
		\begin{equation*}
			\begin{aligned}
				(a+b\|u\|^{2(\theta-1)}_{X})&\iint_{\mathbb R^{2N}}  \frac{(u(x)-u(y))(\phi(x)-\phi(y))}{|x-y|^{N+2s}}dxdy\\&-{\lambda}\int_\Omega f u^{\gamma-1} \phi dx -\int_\Omega gu^{p-1}\phi dx=0.
			\end{aligned}
		\end{equation*}
	\end{definition}
	
	To study the problem via variatonal methods, we define the associated  energy functional $\mathcal E_{\lambda}: X\to \mathbb{R}$ as
	\begin{equation*}
		\mathcal E_{\lambda}(u)=\frac{a}{2}\|u\|^2_{X}+\frac{b}{2\theta}\|u\|^{2\theta}_{X}-\frac{\lambda}{\gamma}\int_\Omega f|u|^{\gamma} dx -\frac{1}{p} \int_\Omega g|u|^p dx.
	\end{equation*}

	One can see that $\mathcal{E}_{\lambda}(u)$ is of class $C^1(X)$ and solutions of \eqref{p}, in the light of above Definition \ref{def}, can be viewed as the critical points of $\mathcal{E}_{\lambda}(u)$. The class of problem under consideration has widely studied  in recent past  because of their vast application in many areas of science. The above  class of problems can be seen as stationary state of the following problem	
	\begin{equation*}
		\left\{
		\begin{aligned}
			u_{tt}-M\left(\int_{ \Omega}|\nabla u|^2dx\right)\Delta u&= F(x,u) &&\mbox{in}\ \ \Omega, \\
			u&=0                                   &&\mbox{on}\ \ \partial\Omega,
		\end{aligned}
		\right.
	\end{equation*}
	which was firstly introduced by Kirchhoff  to deal with free transversal oscillations of elastic strings (see \cite{Kirchhoff}). The term $M$ measures the tension in the string caused by any change in the length of the string during vibration and is directly proportional to the Sobolev norm of the displacement of the string. The degenerate case, arises if the initial tension of the string is zero, physically a very realistic model. We refer the interested readers to the servey \cite{PUCCI} for recent advances in nonlocal Kirchhoff problems. 
	
	The class of nonlinearity under consideration in this paper is referred as concave-convex nonlinearity and attracted many researchers to explore after the pioneering work of Ambrosetti et al. in  \cite{Ambrosetti} where, authors have established global multiplicity result.  
	The work on this class of nonlinearity can be found in \cite{Azorero, Barrios,kj, tfw,Wu} and references therein for both local and {nonlocal problems}. In particular, \cite{Barrios} and \cite{Azorero} contain the study of quasilinear and fractional counter part of \cite{Ambrosetti}, respectively. Precisely the problem involving non-local fractional operator of the type
	\[(-\Delta)^s u=\lambda f u^{q-1}+gu^{p-1},\; u>0, \ \ \textrm{in} \ \Omega, \ \ u=0 \ \textrm{on} \  \mathbb R^n\setminus\Omega,\] 
	has been studied in \cite{sarika}( see \cite{tfw, Wu} for $s=1$) for existence of at least two solutions when $1<q<2$ and ${1<p<2^*_s-1}$ via Nehari manifold technique introduced by Pokhozhaev (see \cite{Poho}) and Nehari (see \cite{N1,N2}). The multiplicity results obtained obtained in \cite{sarika} (also in \cite{tfw, Wu}) heavily relies on the fact that the  nonlinearity  exhibit the convex-concave behaviour which is essential for fibering maps to have two critical points.  In this paper, we have shown that the degenerate fractional Kirchhoff operator allows us to break this structure of concave-convex nonlinearity in order to show multiple solutions via Nehari manifold technique.    
	
	Another class of nonlocal problems involves the operator formed with the fusion of Kirchhoff term and fractional Laplacian. In \cite{Valdinoci} (see the Appendix) authors have given a motivation for this class of problem by modelling a fractional Kirchhoff problem coming out of  string vibrations. These Kirchhoff counterparts  have also occupied the enough space in the  literature due to its vast applications in applied science, see for reference \cite{Autuori,Binlin,NA2,Valdinoci,mana,saldi,pucci,Molica} and references therein . We also refer readers to  \cite{chen,mana} for non-degenerate structure of Kirchhoff term where results have been established  with a control on the parameter $\lambda<\Lambda$ for some small $\Lambda>0$ via Nehari manifold minimization argument. The control on the parameter $\lambda$ is natural as for  $\lambda\geq \Lambda$ it is not obvious to show that the constrained minimizers obtained in different decompositions of Nehari set are the critical points of the associated energy functional of the problem because $\mathcal N_\lambda$ is no longer a manifold.

	In this paper, inspired from the work of \cite{Il'yasov,Silva}, we have included this delicate case in our study when $\lambda \geq \lambda^*_{a,b}$ where $\lambda^*_{a,b}$ is termed as extremal parameter introduced in \cite{IL2} while studying generalized Rayleigh quotient. The value $\lambda^*_{a,b}$  is extremal in the sense that the Nehari set will no longer remain a manifold if the parameter $\lambda$ crosses this threshold $\lambda^*_{a,b}$. By overcoming the above technical difficulty, for $\epsilon>0$ sufficiently small, we have shown existence of at least two positive solutions by constrained minimization on Nehari manifold based on fibering analysis for $\lambda\in (0, \lambda^*_{a,b}+\epsilon)$. 
	
	The work of this paper is divided in the following order.  In section [2],  we have  provided  framework of Nehari setup and section [3] includes some technical results together with our main result of the paper. In section [4], [5] and [6], we have shown the existence of minimizers for the energy functional when $\lambda<\lambda^*_{a,b},\lambda=\lambda^*_{a,b}$ and $\lambda>\lambda^*_{a,b}$ respectively. We have attempted to address both the degenerate and non-degenerate Kirchhoff cases collectively and distinguished whenever it's required.
	
	\section{Nehari manifold structure}
	The main objective of this paper is to look for critical points of $\mathcal E_\lambda$ over $X$.	The critical points of   $\mathcal E_\lambda$ can be obtained via minimization of $\mathcal E_\lambda$. Since $\mathcal E_\lambda$, fails to be bounded from below in $X$ and therefore, we constrained the minimization problem  in a proper subset of $X$, namely Nehari manifold (a natural constraint as it contains all the critical points of $\mathcal E_\lambda$) defined as
	\begin{equation*}
		\mathcal{N}_{\lambda}=\left\{u\in  X \setminus\{0\}: D_{u}\mathcal E_\lambda(u)(u)=0\right\}.
	\end{equation*}
	It can be observed that $\mathcal{N}_{\lambda}$ is a disjoint union of the following sets,
	\begin{align*}
		\mathcal{N}_{\lambda}^+=\{
		u\in  \mathcal N_{\lambda}:&D_{uu}\mathcal E_\lambda(u)(u, u)>0\},
		\mathcal{N}_{\lambda}^-=\{
		u\in  \mathcal N_{\lambda}:D_{uu}\mathcal E_\lambda(u)(u, u)<0\},	\\
		\mathcal{N}_{\lambda}^0=\{
		u\in  \mathcal N_{\lambda}:&D_{uu}\mathcal E_\lambda(u)(u, u)=0\}.	
	\end{align*}
	Consider the fibering functions $\psi_{\lambda, u} :[0, \infty)\to \mathbb R$ for every $u\in X$, defined as $\psi_{\lambda,u}(t)=\mathcal E_{\lambda}(tu)$, that is,
	\[
	\psi_{\lambda,u}(t)=\frac{at^2}{2}\|u\|^2_{X}+\frac{b t^{2\theta}}{2\theta}\|u\|^{2\theta}_{X}-\frac{\lambda t^{\gamma}}{\gamma}\int_\Omega f |u|^{\gamma} dx -\frac{t^{p}}{p}  \int_\Omega g|u|^p dx.
	\]
	Note that $u\in \mathcal{N}_{\lambda}$ if and only if $\psi'_{\lambda,u}(1)=0$ and, in general,  $tu\in \mathcal N_{\lambda}$ if and only if $\psi'_{\lambda, u}(t)=0$. Therefore, we can read  Nehari submanifolds in terms of $t=1$ being local maxima or local minima or saddle point of $\psi_{\lambda, u}(t)$ as follows
	\begin{align*}
		\mathcal N_{\lambda}^+&=\{u\in \mathcal N_{\lambda}: \psi_{\lambda,u}''(1)>0\},\;
		\mathcal N_{\lambda }^-=\{u\in \mathcal N_{\lambda}: \psi_{\lambda,u}''(1)<0\},\\
		\mathcal N_{\lambda}^0&=\{u\in \mathcal N_{\lambda}: \psi_{\lambda,u}''(1)=0\}.
	\end{align*}
	The presence of sign changing weights as well as the nature of Kirchhoff term can govern the behaviour of fibering functions and therefore we split the discussion based on the sign of the integral involving sign changing weight $g(x)$ and the nature of Kirchhoff term. For, we define the following

	\begin{align*}
		\mathcal{C}^+&=\{u\in X: \int_\Omega g|u|^p dx>0\} \;\;\textrm{and}\;\;
		\mathcal{C}^-=\{u\in X:\int_\Omega g|u|^p dx\leq 0\}.
	\end{align*} 
	
	In the next section we give a characterization of extremal parameter $\lambda^*_{a,b}$ for the both degenerate and non-degenerate Kirchhoff term.   
	
	\section{Characterization of extremal parameter}
	For $u \in \mathcal{C}^+$,	by	solving  following system of two nonlinear equations in variables $t$ and $\lambda$
	\begin{equation}\label{one}	\left\{
		\begin{aligned}a\|tu\|^2_{X}+b\|tu\|^{2\theta}_{X}-\lambda \int_\Omega f|tu|^{\gamma}dx-t^{p}\int_\Omega g(x)|u|^pdx&=0,\\
			2a\|tu\|^2_{X}+2b\theta\|tu\|^{2\theta}_{X}-\lambda\gamma\int_\Omega f|tu|^{\gamma}dx-p \ t^{p} \int_\Omega g(x)|u|^pdx&=0,\end{aligned}
		\right.
	\end{equation}
	we can find $(t_{a,b}(u),\lambda_{a,b}(u))$ uniquely.
	In fact, 
	by eliminating $\lambda$ from above equations we can see $t_{a,b}(u)$ as unique zero of the  following scalar function
	\[m_u(t)=a(2-\gamma)t^2\|u\|^2_{X}+b(2\theta-\gamma)t^{2\theta}\|u\|^{2\theta}_{X}-{(p-\gamma)t^{p}}\int_\Omega g(x)|u|^pdx,\] and  hence we have unique $\lambda_{a,b}(u)$, given as 
	\[\lambda_{a,b}(u)=\frac{a(p-2)\|t_{a,b}(u)u\|^2_{X}+2b(p-2\theta)\|t_{a,b}(u)u\|^{2\theta}_{X}}{(p-\gamma)\int_{ \Omega}f|t_{a,b}(u)u|^\gamma dx}.\]
	In the degenerate Kirchhoff case, we can get an explicit expression for the solution of the above system \eqref{one} as 
	\[t_{0,1}(u):=t(u)=\left(\frac{(2\theta-\gamma)\|u\|^{2\theta}_{X}}{{(p-\gamma)}\int_\Omega g(x)|u|^p dx}\right)^{\frac{1}{p-2\theta}} \]
	and
	\begin{align*}
		\lambda_{0,1}(u):=	\lambda(u)
		&=\left(\frac{p-2\theta}{p-\gamma}\frac{\|u\|^{2\theta}_{X}}{\int_{\Omega}f |u|^\gamma dx}\right)\left(\frac{2\theta-\gamma}{p-\gamma}\frac{\|u\|^{2\theta}_{X}}{\int_\Omega g(x)|u|^pdx}\right)^{\frac{2\theta-\gamma}{p-2\theta}}.
	\end{align*}
	For $a\geq 0, b>0$	define the extremal value  
	\begin{equation}\label{lmini}
		\lambda^*_{a, b}=\inf_{X \cap C^+}\lambda_{a,b}(u)
	\end{equation} In particular, when $a=0, b=1$, we denote $\lambda^*=\lambda^*_{0, 1}$.
	
	The following proposition is direct consequence of compact  fractional Sobolev embeddings.
	
	\begin{proposition}\label{luch}
		The function $\lambda_{a,b}(u)$ is continuous, 0-homogeneous and unbounded from above. Moreover, $\lambda^*_{a,b}>0$  and there exists $u\in \mathcal{C}^+$ such that $\lambda^*_{a,b}=\lambda_{a,b}(u)$.
		
	\end{proposition}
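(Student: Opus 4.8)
The plan is to recast $\lambda_{a,b}(u)$ as a one-dimensional maximization, which unifies the degenerate and non-degenerate cases and makes every assertion transparent. Dividing the first equation of \eqref{one} by $t^{\gamma-1}$ (equivalently, reading off $\psi_{\lambda,u}'(t)=0$) shows that $tu\in\mathcal N_\lambda$ precisely when $\lambda=\Lambda_u(t)$, where
\[
\Lambda_u(t)=\frac{a\,t^{2-\gamma}\|u\|_X^2+b\,t^{2\theta-\gamma}\|u\|_X^{2\theta}-t^{p-\gamma}\int_\Omega g|u|^p\,dx}{\int_\Omega f|u|^\gamma\,dx}.
\]
A direct computation gives $m_u(t)=t^{\gamma+1}\big(\int_\Omega f|u|^\gamma\,dx\big)\Lambda_u'(t)$, so the unique zero $t_{a,b}(u)$ of $m_u$ is the unique critical point of $\Lambda_u$; since for $u\in\mathcal C^+$ one has $\Lambda_u(t)\to0^+$ as $t\to0^+$ and $\Lambda_u(t)\to-\infty$ as $t\to\infty$ with $\Lambda_u>0$ on an interval, this critical point is the global maximizer and
\[
\lambda_{a,b}(u)=\Lambda_u\big(t_{a,b}(u)\big)=\max_{t>0}\Lambda_u(t).
\]
First I would record $0$-homogeneity: replacing $u$ by $\tau u$ and $t$ by $t/\tau$ leaves $\Lambda_u(t)$ unchanged (the powers match because $\|\cdot\|_X^2,\|\cdot\|_X^{2\theta},\int_\Omega g|\cdot|^p\,dx,\int_\Omega f|\cdot|^\gamma\,dx$ are homogeneous of degrees $2,2\theta,p,\gamma$), hence the maximum is invariant.

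For continuity, the maps $u\mapsto\|u\|_X$, $u\mapsto\int_\Omega f|u|^\gamma\,dx$ and $u\mapsto\int_\Omega g|u|^p\,dx$ are continuous on $X$ by the continuous fractional Sobolev embedding, and on $X\cap\mathcal C^+$ the denominator $\int_\Omega f|u|^\gamma\,dx$ is bounded away from $0$. In the degenerate case the explicit formula for $\lambda(u)$ gives continuity at once; in the non-degenerate case I would apply the implicit function theorem to $m_u(t)=0$, noting $m_u'(t_{a,b}(u))=t_{a,b}(u)^p\,\phi'(t_{a,b}(u))<0\neq0$, where $\phi(t)=a(2-\gamma)t^{2-p}\|u\|_X^2+b(2\theta-\gamma)t^{2\theta-p}\|u\|_X^{2\theta}$ is strictly decreasing, so $u\mapsto t_{a,b}(u)$ is continuous and therefore so is $\lambda_{a,b}=\Lambda_{(\cdot)}(t_{a,b}(\cdot))$.

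Unboundedness from above follows by driving $\int_\Omega g|u|^p\,dx\to0^+$ within $\mathcal C^+$: choosing test functions whose positive $g$-mass is made small while $\|u\|_X$ and $\int_\Omega f|u|^\gamma\,dx$ stay bounded and bounded below, the numerator of $\Lambda_u$ loses its only negative term and $\max_{t>0}\Lambda_u(t)\to+\infty$. For $\lambda^*_{a,b}>0$ I would work on the sphere $\{\|u\|_X=1\}$ (permissible by homogeneity): there $\int_\Omega f|u|^\gamma\,dx$ and $\int_\Omega g|u|^p\,dx$ are bounded above by Sobolev, whence $t_{a,b}(u)$ admits a uniform positive lower bound and the numerator of $\lambda_{a,b}(u)$ is bounded below by a positive constant, giving a uniform positive lower bound for the quotient.

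The main obstacle is the attainment, and here the maximization formulation is decisive. Taking a minimizing sequence $u_n$ with $\|u_n\|_X=1$, boundedness yields $u_n\rightharpoonup u$ in $X$, and the compact embeddings $X\hookrightarrow L^\gamma(\Omega),L^p(\Omega)$ give $\int_\Omega f|u_n|^\gamma\,dx\to\int_\Omega f|u|^\gamma\,dx$ and $\int_\Omega g|u_n|^p\,dx\to\int_\Omega g|u|^p\,dx$. The unboundedness just proved forces $\int_\Omega g|u_n|^p\,dx\geq\delta>0$ along the minimizing sequence (otherwise $\lambda_{a,b}(u_n)\to\infty$), so the limit satisfies $\int_\Omega g|u|^p\,dx\geq\delta>0$; in particular $u\neq0$, $\int_\Omega f|u|^\gamma\,dx>0$, and $u\in\mathcal C^+$. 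Finally, for each fixed $t>0$ the numerator of $\Lambda_u(t)$ is weakly lower semicontinuous (the two $\|u\|_X$-terms carry positive coefficients and the $g$-term converges), so $\Lambda_u(t)\leq\liminf_n\Lambda_{u_n}(t)\leq\liminf_n\lambda_{a,b}(u_n)=\lambda^*_{a,b}$; taking the supremum over $t$ gives $\lambda_{a,b}(u)=\max_{t>0}\Lambda_u(t)\leq\lambda^*_{a,b}$. Since $u\in\mathcal C^+$, the reverse inequality $\lambda_{a,b}(u)\geq\lambda^*_{a,b}$ holds by definition of the infimum, so $\lambda^*_{a,b}=\lambda_{a,b}(u)$ is attained. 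The delicate point throughout is precisely that the weak limit must remain in the set $\mathcal C^+$; this is guaranteed by the blow-up of $\lambda_{a,b}$ near $\{\int_\Omega g|u|^p\,dx=0\}$, which keeps minimizing sequences uniformly inside $\mathcal C^+$.
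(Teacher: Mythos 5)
Your reformulation $\lambda_{a,b}(u)=\max_{t>0}\Lambda_u(t)$ is correct (it is precisely the nonlinear Rayleigh quotient of Il'yasov, which the paper cites but never writes out: the paper's own ``proof'' of Proposition \ref{luch} is the single sentence that it follows from compact Sobolev embeddings, plus the sketchy Remark \ref{lwsc}). Your homogeneity, continuity, positivity and attainment arguments are sound; in particular the attainment step, which realizes $\lambda_{a,b}$ as a supremum over $t$ of functionals $u\mapsto\Lambda_u(t)$ that are weakly lower semicontinuous for each fixed $t$, and which uses the blow-up of $\lambda_{a,b}$ near $\partial\mathcal C^+$ to keep minimizing sequences uniformly inside $\mathcal C^+$, is a clean and complete version of what the paper only gestures at.

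There is, however, a genuine flaw in the unboundedness step. You ask for test functions with $\int_\Omega g|u|^p\,dx\to0^+$ while $\|u\|_X$ \emph{and} $\int_\Omega f|u|^\gamma\,dx$ stay bounded and bounded below. Under assumption (G) alone such functions need not exist: (G) permits $g\geq g_0>0$ on all of $\Omega$ (e.g.\ $g\equiv1$, admissible in Theorem \ref{T2}, where no sign condition on $g$ is imposed), and then $\int_\Omega g|u|^p\,dx\geq g_0\|u\|^p_{L^p}$, so small $g$-mass forces $\|u\|_{L^p}\to0$, hence $\|u\|_{L^\gamma}\to0$ by H\"older on the bounded domain, hence $\int_\Omega f|u|^\gamma\,dx\to0$, contradicting your lower bound; your construction is vacuous in that case. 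The fix is immediate, because the lower bound on $\int_\Omega f|u|^\gamma\,dx$ is never actually used: your mechanism only needs $\|u_n\|_X$ bounded \emph{below} and $\int_\Omega f|u_n|^\gamma\,dx$ bounded \emph{above} (automatic from the Sobolev embedding once $\|u_n\|_X=1$), since then for each fixed $T>0$ one has $\lambda_{a,b}(u_n)\geq\Lambda_{u_n}(T)\to\bigl(aT^{2-\gamma}+bT^{2\theta-\gamma}\bigr)/F$ with $F$ the Sobolev bound, and letting $T\to\infty$ gives $\lambda_{a,b}(u_n)\to\infty$. Admissible test functions always exist: rescale a fixed profile so that it concentrates at a point where $g>0$ (possible since $g^+\not\equiv0$ and $g$ is continuous); as $p<2^*_s$ is subcritical, all Lebesgue norms vanish under this rescaling while $\|u_n\|_X\simeq1$ and the $g$-integral stays positive. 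Note that your attainment argument is not contaminated by this flaw, since there you invoke exactly the correct mechanism ($\int_\Omega g|u_n|^p\,dx\to0^+$ with $\|u_n\|_X=1$ forces $\lambda_{a,b}(u_n)\to\infty$) rather than the defective construction. A last cosmetic point: $\int_\Omega f|u|^\gamma\,dx$ is not bounded away from zero on all of $X\cap\mathcal C^+$ (it vanishes as $u\to0$); it is only locally so near any fixed $u\neq0$, which is all that your continuity argument requires.
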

	\begin{remark}\label{lwsc}
		The function $\lambda_{a,b}(u)$ is weakly lower semicontinuous also. {For the degenerate case, it follows directly form weak lower semi-continuity of the norm. For non-degerate case,  let $u_n \rightharpoonup u$ in $\mathcal{C}^+$. If  $\|u_n\|_{X}\rightarrow \|u\|_{X},$ then from uniqueness of the solution of the system in \eqref{one}, we get $t(u_n)\rightarrow t(u)$ and $\lambda(u_n)\rightarrow\lambda(u)$. If not, then we have $\|u\|_X<\liminf_{n\rightarrow\infty}\|u_n\|$. Therefore,
			$0=\psi'_{\lambda_{a,b},u}(t(u))<\liminf_{n\rightarrow\infty}\psi'_{\lambda_{a,b},u_n}(t(u)),$ i.e., $\psi'_{\lambda,u_n}(t(u))>0$ for large value of $n$. Thus graph of $\psi_{\lambda_{a,b},u_n}(t)>0$ will be of the type fig 2(b) for large values of $n$. Equivalently,  $\lambda_{a,b}(u)<\lambda_{a,b}(u_n)$ for large $n$ or $\lambda_{a,b}(u)\leq\liminf_{n\to\infty}\lambda_{a,b}(u_n)$.} 
	\end{remark}

	In the following Proposition, we show that Nehari decompositions are non-empty.
	\begin{proposition}{\label{p,2.2}}
		For a given $u\in X$ and $a>0, b>0$ with $1<\gamma<2$ or $a=0, b>0$ with $2<\gamma<2\theta$ there are the following two cases:
		
		\begin{enumerate}
			\item [(i)]If $u\in \mathcal \mathcal{C}^-$, then for  any $\lambda>0$, there exists a unique $t_{\lambda}^+(u)>0$ such that  $t_{\lambda}^+u \in \mathcal N_{\lambda}^+.$ 
			\item [(ii)]If $u\in \mathcal{C}^+$, then we have following three situations.
			\begin{enumerate}
				\item [(a)]  $0<\lambda<\lambda_{a,b} (u)$, there exist unique $0<t_{\lambda}^+(u)<t_{\lambda}^-(u)$ such that $t_{\lambda}^+ u\in \mathcal N_{\lambda}^+ $ and $t_{\lambda}^-u\in \mathcal N_{\lambda}^- $. 
				\item [(b)]  $\lambda=\lambda_{a,b}(u)$, then there exists unique $t^0_{\lambda}(u)>0$ such that $t^0_{\lambda}u\in \mathcal N_{\lambda}^0$.
				\item [(c)] $\lambda>{\lambda_{a,b}}(u)$, then $tu\not \in \mathcal N_{\lambda}$ for any $t>0$. In particular, $u\not\in \mathcal N_{\lambda}.$
			\end{enumerate}
		\end{enumerate}
	\end{proposition}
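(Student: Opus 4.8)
The plan is to reduce the whole statement to a one–variable study of the fibering map $\psi_{\lambda,u}$ via two algebraic identities. Writing $C=\int_\Omega f|u|^\gamma\,dx$, which is strictly positive by (F), and $D=\int_\Omega g|u|^p\,dx$, differentiation gives the factorisation $\psi'_{\lambda,u}(t)=t^{\gamma-1}\bigl(\phi_u(t)-\lambda C\bigr)$, where $\phi_u(t)=a\|u\|_X^2\,t^{2-\gamma}+b\|u\|_X^{2\theta}\,t^{2\theta-\gamma}-D\,t^{p-\gamma}$. Hence for $t>0$ one has $tu\in\mathcal N_\lambda$ exactly when $\phi_u(t)=\lambda C$, and the proposition becomes the bookkeeping of how the horizontal level $\lambda C$ meets the graph of $\phi_u$. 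To identify which of $\mathcal N_\lambda^+,\mathcal N_\lambda^-,\mathcal N_\lambda^0$ a critical point belongs to, I would use the second identity $m_u(t)=t^2\psi''_{\lambda,u}(t)+(1-\gamma)\,t\,\psi'_{\lambda,u}(t)$, obtained by eliminating $\lambda$ exactly as in the derivation of $m_u$ in Section 3 (note also that $m_u=t^{\gamma+1}\phi'_u$). On $\mathcal N_\lambda$ the second summand vanishes, so at any critical point $t_*$ we get $\psi''_{\lambda,u}(t_*)=t_*^{-2}m_u(t_*)$, and the membership of $t_* u$ is read off directly from the sign of $m_u(t_*)$.

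For part (i), $u\in\mathcal C^-$ means $D\le 0$, so every coefficient of $\phi_u$ is nonnegative while the exponents $2-\gamma,\,2\theta-\gamma,\,p-\gamma$ are all positive; therefore $\phi_u$ is a strictly increasing bijection of $(0,\infty)$ onto $(0,\infty)$. Since $\lambda C>0$, the equation $\phi_u(t)=\lambda C$ has a unique root $t^+_\lambda(u)$; and because $D\le 0$ makes $m_u$ strictly positive on $(0,\infty)$, the identity $\psi''_{\lambda,u}(t_*)=t_*^{-2}m_u(t_*)$ gives $\psi''_{\lambda,u}(t^+_\lambda)>0$, i.e. $t^+_\lambda u\in\mathcal N_\lambda^+$. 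The degenerate case $a=0$ is covered verbatim, the term $t^{2-\gamma}$ (ill-behaved for $\gamma>2$) being simply absent.

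For part (ii), $u\in\mathcal C^+$ gives $D>0$, whence $\phi_u(t)\to 0^+$ as $t\to 0^+$ and $\phi_u(t)\to-\infty$ as $t\to\infty$. I would invoke the fact, already established in Section 3, that $m_u$ has a unique positive zero $t_{a,b}(u)$; combined with $m_u>0$ for small $t$ and $m_u<0$ for large $t$, this forces $m_u>0$ on $(0,t_{a,b}(u))$ and $m_u<0$ on $(t_{a,b}(u),\infty)$. Through $m_u=t^{\gamma+1}\phi'_u$ this makes $\phi_u$ strictly increasing and then strictly decreasing, hence unimodal with maximum at $t_{a,b}(u)$; and since $(t_{a,b}(u),\lambda_{a,b}(u))$ solves the system \eqref{one}, which is precisely the pair of conditions $\psi'_{\lambda,u}(t)=0$ and $\psi''_{\lambda,u}(t)=0$, the maximal value $\phi_u(t_{a,b}(u))$ equals $\lambda_{a,b}(u)\,C$. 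The three subcases are then the three ways the level $\lambda C$ can meet a unimodal curve: for $\lambda<\lambda_{a,b}(u)$ the level lies strictly below the apex and is crossed once on each monotone branch, yielding $0<t^+_\lambda(u)<t_{a,b}(u)<t^-_\lambda(u)$ with $m_u>0$ at the first (so $t^+_\lambda u\in\mathcal N_\lambda^+$) and $m_u<0$ at the second (so $t^-_\lambda u\in\mathcal N_\lambda^-$); for $\lambda=\lambda_{a,b}(u)$ the level touches the apex, giving the single root $t^0_\lambda(u)=t_{a,b}(u)$ with $m_u=0$, i.e. $t^0_\lambda u\in\mathcal N_\lambda^0$; and for $\lambda>\lambda_{a,b}(u)$ one has $\phi_u(t)<\lambda C$ for every $t>0$, so $\psi'_{\lambda,u}(t)<0$ throughout, no positive multiple of $u$ lies on $\mathcal N_\lambda$, and in particular $u\notin\mathcal N_\lambda$.

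The monotone counting in both parts is mechanical once the two identities are in place; the genuinely delicate point is the unimodality of $\phi_u$ in the convex case, that is, that $m_u$ changes sign exactly once and from $+$ to $-$. I would settle this uniformly in $a\ge 0$ by factoring $m_u(t)=t^2\bigl(a(2-\gamma)\|u\|_X^2+b(2\theta-\gamma)\|u\|_X^{2\theta}t^{2\theta-2}-(p-\gamma)D\,t^{p-2}\bigr)$ and observing that $t\mapsto a(2-\gamma)\|u\|_X^2\,t^{-(p-2)}+b(2\theta-\gamma)\|u\|_X^{2\theta}\,t^{-(p-2\theta)}$ decreases strictly from $+\infty$ to $0$, since both exponents are negative because $p>2\theta>2$; hence it meets the positive constant $(p-\gamma)D$ at exactly one point, which yields both the uniqueness of $t_{a,b}(u)$ and the single sign change. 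This is precisely the step that consumes the exponent orderings $1<\gamma<2<2\theta<p$ and $2<\gamma<2\theta<p$, so it is where the structural hypotheses on $\gamma,\theta,p$ are indispensable.
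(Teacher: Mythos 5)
Your proof is correct and follows essentially the same route as the paper: both reduce membership of $tu$ in $\mathcal N_\lambda$ to the scalar equation $\phi_u(t)=\lambda\int_\Omega f|u|^\gamma\,dx$ (the paper's $\Phi_u$ and equation \eqref{sceq}) and then count intersections of the level $\lambda C$ with the graph of $\phi_u$, which is strictly increasing for $u\in\mathcal{C}^-$ and unimodal with apex of height $\lambda_{a,b}(u)C$ for $u\in\mathcal{C}^+$. The only deviations are minor refinements in execution: your rescaled-monotonicity argument for the single sign change of $m_u$ replaces the paper's second differentiation of $\mathcal{H}_u$, and your identity $\psi''_{\lambda,u}(t_*)=t_*^{-2}m_u(t_*)$ on $\mathcal N_\lambda$ is a precise form of the paper's relation $\psi''_{\lambda,u}(t)=t^{1+\gamma}\Phi'_u(t)$, which is valid only at critical points of the fibering map.
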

	
	\begin{proof}
		Proof of this proposition is divided for degenerate and non-degenerate cases respectively as follows.\\
		\textbf{Case 1:}(Degenrate Kirchhoff case) To see the graph of fibering map $\psi_{\lambda,u}$ define a map  $\Phi_{u}:\mathbb{R}^{+}\rightarrow\mathbb{R}$ such that
		\[\Phi_{u}(t)=t^{2\theta-\gamma}\|u\|^{2\theta}_{X}-t^{p-\gamma}\int_{ \Omega}g|u|^pdx,\]
		which has a unique critical point when $u\in\mathcal{C}^{+}$ and can be obtained by solving	 the following equation 
		\[\Phi'_{u}(t)=(2\theta-\gamma)t^{2\theta-\gamma-1}\|u\|^{2\theta}_{X}-(p-\gamma)t^{p-\gamma-1}\int_{ \Omega}g|u|^pdx.\] 
		Moreover, as $\lim_{t\rightarrow 0^+}\Phi_{u}(t)=0$ and $\lim_{t\rightarrow \infty}\Phi_{u}(t)=-\infty$ the graph of $\Phi_u$ be as shown in the fig 1(b). When  $u\in \mathcal{C}^{-}$,  $\Phi'_{u}(t)>0$ for all $t\geq0$ and hence no critical point for $\Phi_u(t)$ leading to the graph as shown in fig 1(a).  
		
		\begin{figure}
			
			\flushleft
			\begin{tikzpicture}[scale=.45]
				\draw[thick, ->] (-1, 0) -- (7, 0);
				\draw[thick, ->] (0, -1) -- (0, 5);
				\draw[thick] (0,0) .. controls (3.8,2) and (3.8,2) .. (4.6,5);
				\draw (7,0) node[below]{$t$};
				\draw (-1.2,5.5) node[below]{$\Phi_u(t)$};
				\draw (11,-7) node[below]{fig 1: Behaviour of $\Phi_u(t)$ };
				\draw (2,-3.1) node[below]{fig 1(a)\;$u\in \mathcal{C}^-$};
				\hspace*{6cm}
				\draw[thick, ->] (-1, 0) -- (7, 0);
				\draw[thick, ->] (0, -1) -- (0, 5);
				\draw[thick] (0,0) .. controls (0,0.2) and (0,0.2) .. (1,1.2);
				\draw[thick] (1, 1.2) .. controls (3.3,3)  and  (4.3, 0) ..(4.33,0);
				\draw[thick] (4.33, 0) .. controls (4.4,0) and  (4.2, 0) ..(5.8,-3.2);
				\draw (7,0) node[below]{$t$};
				\draw (-1.2,5.5) node[below]{$\Phi_u(t)$};
				\draw (2,-3.1) node[below]{fig 1(b)\;$u\in  \mathcal{C}^+$};
			\end{tikzpicture}
			
		\end{figure}

		\textbf{Case 2:}(Non-degenrate Kirchhoff case) For a given $u\in X\setminus 0$, define a  map  $\Phi_{u}:\mathbb{R}^{+}\rightarrow\mathbb{R}$ such that
		\[\Phi_{u}(t)=at^{2-\gamma}\|u\|^2_{X}+bt^{2\theta-\gamma}\|u\|^{2\theta}_{X}-t^{p-\gamma}\int_{ \Omega}g|u|^pdx.\]
		Note that $tu\in \mathcal N_\lambda$ (or $t$ is a critical point of $\psi_{\lambda, u}$) if and only if the following equation has a scalar solution in $t$
		\begin{equation}\label{sceq}
			\Phi_{u}(t)=\lambda\int_\Omega f|u|^\gamma dx\end{equation}
		In order to look for solution of above scalar equation \eqref{sceq}, we analyze the behaviour of $\Phi_u(t)$. For,  \[\Phi'_{u}(t)=a(2-\gamma)t^{1-\gamma}\|u\|^2_{X}+b(2\theta-\gamma)t^{2\theta-\gamma-1}\|u\|^{2\theta}_{X}-(p-\gamma)t^{p-\gamma-1}\int_{ \Omega}g|u|^pdx.\]
		When $u\in \mathcal{C}^-$, $\Phi'_{u}(t)>0$ for all $t\geq0$ and hence \eqref{sceq} has unique solution for all values of $\lambda>0$. Consequently, unique projection of $u$, namely $t_\lambda^+ u$ in $\mathcal N_\lambda^+$ (as $\psi_{\lambda, u}^{\prime\prime}(t)=t^{1+\gamma} \Phi'_u(t)$) In order to see the behaviour of $\Phi_u(t)$ when $u\in \mathcal{C}^+$,
		we can rewrite $\Phi'_{u}(t)=t^{1-\gamma}\mathcal{H}_{u}(t)$, where
		\[\mathcal{H}_u(t)=a (2-\gamma)\|u\|^{2}_{X}+b(2\theta-\gamma)t^{2\theta-2}\|u\|^{2\theta}_X-(p-\gamma)t^{p-2}\int_{\Omega}g|u|^pdx.\] 
		Then
		\[\mathcal{H}'_{u}(t)=b(2\theta-2){(2\theta-\gamma)}t^{2\theta-1}\|u\|^{2\theta-2}_{X}-(p-2)(p-\gamma)t^{p-1}\int_{ \Omega}g|u|^pdx.\]
		Thus one can observe that there exist a unique critical point $t^*>0$ such that $\mathcal{H}'_{u}(t^*)=0$, where
		\[t^*=\displaystyle{\left(\frac{b(2\theta-2){(2\theta-\gamma)}\|u\|^{2\theta}_{X}}{(p-2)(p-\gamma)\int_{ \Omega}g|u|^pdx}\right)^\frac{1}{p-2\theta}}.\] When $u\in\mathcal{C}^{+}$, $\mathcal{H}'_{u}(t)>0$ as $t\rightarrow 0^+$ and $\mathcal{H}'_{u}(t)\rightarrow-\infty$ as $t\rightarrow \infty$. { As $\mathcal{H}_{u}(t)>0$, as $t\rightarrow 0^+$ and   $\mathcal{H}_{u}(t)\rightarrow-\infty$ as $t\rightarrow \infty$, there exists unique $t_*>t^*>0$ such that $\mathcal{H}_u(t_*)=0$.} Therefore $\Phi_u(t)$ has global maximum at unique point $t=t_*$. Consequently, equation \eqref{sceq} has exactly two solutions for suitable control over $\lambda$. In otherwords, there are unique projections of $u$, namely $t_\lambda^+u$ and $t_\lambda^- u$ such that $t_\lambda^+u\in \mathcal N_\lambda^+$ and $t_\lambda^- u\in \mathcal N_\lambda^-$ for $\lambda<\lambda_{a, b}(u)$. 
		\begin{figure}
			
			\flushleft
			\begin{tikzpicture}[scale=.45]
				\draw[thick, ->] (-1, 0) -- (7, 0);
				\draw[thick, ->] (0, -1) -- (0, 5);
				\draw[thick] (0,0) .. controls (0.7,-0.1) and (0.7,-0.1) .. (0.8,-0.2);
				\draw[thick] (0.8,-0.2) .. controls (3.8,-4)  and  (4, 2) ..(5,4);
				\draw[dotted] (2.42,0) .. controls (2.42,-1.5) and  (2.42,-1.5) ..(2.42,-1.5);
				\draw (2,1.1) node[below]{$t^{+}_{\lambda}(u)$};
				\draw (7,0) node[below]{$t$};
				\draw (-1.2,5.5) node[below]{$\psi_{\lambda,u}$};
				\draw (2,-3.1) node[below]{fig 2(a)\;$u\in \mathcal{C}^-, \;\lambda>0$};
				\hspace*{6cm}
				\draw[thick, ->] (-1, 0) -- (7, 0);
				\draw[thick, ->] (0, -1) -- (0, 5);
				\draw[thick] (0,0) .. controls (0.7,-0.1) and (0.7,-0.1) .. (0.8,-0.2);
				\draw[thick] (0.8, -0.2) .. controls (2.8,-3)  and  (3, 0) ..(3.36,0);
				\draw[thick] (3.3, 0) .. controls (4.6,3) and  (6, 0) ..(6.6,-2);
				\draw[dotted] (2.25,0) .. controls (2.25,-1.5) and  (2.25,-1.5) ..(2.25,-1.5);
				\draw[dotted] (4.5,0) .. controls (4.5,1.35) and  (4.5,1.35) ..(4.5,1.35);
				\draw (4.5,-0.2) node[below]{$t^{-}_{\lambda}(u)$};
				\draw (2,1.1) node[below]{$t^{+}_{\lambda}(u)$};
				\draw (7,0) node[below]{$t$};
				\draw (-1.2,5.5) node[below]{$\psi_{\lambda,u}$};
				\draw (2,-3.1) node[below]{fig 2(b)\;$u\in \mathcal{C}^+, \;\lambda<{\lambda_{a,b}}(u)$};
			\end{tikzpicture}
			\flushleft
			\begin{tikzpicture}[scale=.45]
				\draw[thick, ->] (-1, 0) -- (7, 0);
				\draw[thick, ->] (0, -4) -- (0, 2);
				\draw[thick] (0,0) .. controls (0.6,-0.1) and (0.6,-0.1) .. (0.7,-0.2);
				\draw[thick] (0.7, -0.2) .. controls (2,-2) and  (2, -2) .. (3,-2);
				\draw[thick] (3, -2) .. controls (4,-2) and  (4, -2) ..(5.6,-4);
				\draw[dotted] (2.8,0) .. controls (2.8,-1.9) and  (2.8,-1.9) ..(2.8,-1.9);
				\draw (2.7,1.1) node[below]{$t^{0}_{\lambda}(u)$};
				\draw (2.5,-5.1) node[below]{fig 2(c)\;$u\in \mathcal{C}^+, \;\lambda={\lambda_{a,b}}(u)$};
				\draw (7,0) node[below]{$t$};
				\draw (-1.2,2.5) node[below]{$\psi_{\lambda,u}$};
				\draw (11,-7) node[below]{fig. 2: Behaviour of $\psi_{\lambda,u}(t)$};
				\hspace*{6cm}
				\draw[thick, ->] (-1, 0) -- (7, 0);
				\draw[thick, ->] (0, -4) -- (0, 2);
				\draw[thick] (0,0) .. controls (0.7,-0.1) and (0.7,-0.1) .. (0.74,-0.1);
				\draw[thick] (0.7, -0.1) .. controls (3.7,-1) and  (3.7, -1) ..(6,-4);
				\draw (2,-5.1) node[below]{fig 2(d)\;$u\in \mathcal{C}^+,\;\lambda>{\lambda_{a,b}}(u)$};
				\draw (7,0) node[below]{$t$};
				\draw (-1.2,2.5) node[below]{$\psi_{\lambda,u}$};
			\end{tikzpicture}
		\end{figure}
		
		{The result of this proposition is depicted in the figure fig 2 above. Observe that fig 2$(a)$ corresponds to the part $(i)$  and fig 2$(b)-2(d)$ are related with parts $(ii)(a)-(c)$ respectively.} 
	\end{proof}
	
	\begin{remark}It is clear in light of the above results, that  $\mathcal{N}_{\lambda}^-\neq\emptyset$ and  $\mathcal{N}_{\lambda}^+\neq\emptyset$ for all $\lambda>0$. Moreover for $0<\lambda<\lambda^*_{a,b}$,
		$\mathcal{N}_{\lambda}^0=\emptyset$. By setting, $N(u)=D_{u}\mathcal{E}_{\lambda}(u)u$, we have $D_{u}N(u)(u)=D_{uu}\mathcal{E}_{\lambda}(u)(u,u)+D_{u}\mathcal{E}_{\lambda}(u)u=D_{uu}\mathcal{E}_{\lambda}(u)(u,u)\neq0$ as $u\in \mathcal{N}_{\lambda}$ and $\mathcal{N}_{\lambda}^0=\emptyset$ for $\lambda<\lambda^*_{a,b}$. Therefore from implicit function theorem one can view $\mathcal{N}_{\lambda}$ as $C^1$ manifold of codimension one when $\lambda<\lambda^*_{a,b}$. But once $\mathcal{N}^{0}_{\lambda}\neq \emptyset$, we can have a $u\in \mathcal{N}_{\lambda}$ such that $D_{uu}\mathcal{E}_{\lambda}(u)(u,u)=0$,  in that case the set $\mathcal{N}_{\lambda}$ will be no longer a manifold. In coming section we will see that Nehari decomposition $\mathcal{N}^0_{\lambda}\neq \emptyset$ when $\lambda$ crosses the extremal value.
	\end{remark}
	
	The main results of this paper are stated in the form of the following theorems.
	\begin{theorem}\label{T1}
		Let $g<0$ near $\partial\Omega$, $M$ be defined as $M(t)=a+bt^{\theta-1}$ with $\theta>1$ satisfying $2\theta<p<2^*_s$ and $a> 0$ then for $\gamma\in(1, 2)$ there exist at least two positive solutions for the problem \eqref{p}  for $\lambda\in(0,\lambda^*_{a,b}+\epsilon)$, where $\epsilon>0$ is sufficiently small.	\end{theorem}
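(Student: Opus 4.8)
The plan is to produce the two solutions as constrained minimizers of $\mathcal E_\lambda$ on the two smooth pieces $\mathcal N_\lambda^+$ and $\mathcal N_\lambda^-$ of the Nehari set, handle the classical regime $\lambda<\lambda^*_{a,b}$ by the fibering analysis of Proposition \ref{p,2.2}, and then push slightly past the threshold by a continuity argument. The starting point is coercivity: using $\psi'_{\lambda,u}(1)=0$ to substitute $\int_\Omega g|u|^p\,dx = a\|u\|_X^2+b\|u\|_X^{2\theta}-\lambda\int_\Omega f|u|^\gamma\,dx$, one obtains, for $u\in\mathcal N_\lambda$,
\[
\mathcal E_\lambda(u)=a\Big(\tfrac12-\tfrac1p\Big)\|u\|_X^2+b\Big(\tfrac1{2\theta}-\tfrac1p\Big)\|u\|_X^{2\theta}-\lambda\Big(\tfrac1\gamma-\tfrac1p\Big)\int_\Omega f|u|^\gamma\,dx .
\]
Since $1<\gamma<2<2\theta<p$ all three bracketed constants are positive, and the Sobolev/H\"older bound $\int_\Omega f|u|^\gamma\,dx\le C\|u\|_X^\gamma$ with $\gamma<2\theta$ shows the $\|u\|_X^{2\theta}$ term dominates; hence $\mathcal E_\lambda$ is coercive and bounded below on $\mathcal N_\lambda$. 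Write $c_\lambda^\pm=\inf_{\mathcal N_\lambda^\pm}\mathcal E_\lambda$.

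\textbf{First solution (on $\mathcal N_\lambda^+$).} First I would check $c_\lambda^+<0$ by noting that for small $t>0$ the concave term $-\tfrac{\lambda}{\gamma}t^\gamma\int_\Omega f|u|^\gamma\,dx$ makes $\psi_{\lambda,u}(t)<0$, so $c_\lambda^+\le\psi_{\lambda,u}(t^+_\lambda(u))<0$. Then take a minimizing sequence; coercivity gives boundedness, and the compact embeddings $X\hookrightarrow\hookrightarrow L^r(\Omega)$ for $r<2^*_s$ give strong convergence of the $f$- and $g$-integrals along a subsequence $u_n\rightharpoonup u_1$. The negativity of $c_\lambda^+$ excludes $u_1=0$, and the fibering picture of Proposition \ref{p,2.2} keeps $u_1$ inside $\mathcal N_\lambda^+$, away from $\mathcal N_\lambda^0$. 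Because $\mathcal N_\lambda$ is a $C^1$-manifold wherever $D_{uu}\mathcal E_\lambda(u)(u,u)\neq0$, a Lagrange-multiplier argument combined with $u_1\notin\mathcal N_\lambda^0$ forces the multiplier to vanish, so $u_1$ is a genuine critical point of $\mathcal E_\lambda$. Replacing $u_1$ by $|u_1|$ does not raise the energy (it can only lower $\|\cdot\|_X$ while leaving the integral terms unchanged), so the minimizer may be taken nonnegative, and the fractional strong maximum principle upgrades this to $u_1>0$ in $\Omega$.

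\textbf{Second solution (on $\mathcal N_\lambda^-$), $\lambda<\lambda^*_{a,b}$.} Here $\mathcal N_\lambda^0=\varnothing$, so every $u\in\mathcal C^+$ projects onto $\mathcal N_\lambda^-$ and the same compactness-plus-multiplier scheme produces a second positive critical point $u_2\in\mathcal N_\lambda^-$, necessarily distinct from $u_1$ since $\mathcal N_\lambda^+$ and $\mathcal N_\lambda^-$ are disjoint. The one extra point is to rule out a strict drop $\|u\|_X<\liminf\|u_n\|_X$ in the weak limit: by the monotone/fibering argument already used in Remark \ref{lwsc}, such a drop would place $t=1$ beyond the maximum of $\psi_{\lambda,u}$ and contradict minimality, giving strong convergence in $X$ and $u_2\in\mathcal N_\lambda^-$.

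\textbf{Crossing the threshold — the main obstacle.} For $\lambda\ge\lambda^*_{a,b}$ the set $\mathcal N_\lambda^0$ is nonempty and $\mathcal N_\lambda$ is no longer a manifold, so the multiplier step can break down exactly when an $\mathcal N_\lambda^-$-minimizing sequence slides toward $\mathcal N_\lambda^0$ (where $t_\lambda^-$ and $t_\lambda^+$ have merged). The hard part will be showing this does not occur for $\lambda$ only slightly above $\lambda^*_{a,b}$. I would prove that $\lambda\mapsto c_\lambda^-$ is continuous and that at the threshold there is a \emph{strict energy gap} $c_{\lambda^*_{a,b}}^-<\inf_{\mathcal N_{\lambda^*_{a,b}}^0}\mathcal E_{\lambda^*_{a,b}}$; here the hypothesis $g<0$ near $\partial\Omega$ is decisive, since together with $f>f_0>0$ near $\partial\Omega$ it localizes the extremal minimizer $u^*$ (which attains $\lambda^*_{a,b}=\lambda_{a,b}(u^*)$ by Proposition \ref{luch}) in the interior and prevents minimizing sequences from concentrating at the boundary, so compactness and the gap are genuine. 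By continuity the gap persists for $\lambda\in(\lambda^*_{a,b},\lambda^*_{a,b}+\epsilon)$, which confines the $\mathcal N_\lambda^-$-minimizing sequence to a region bounded away from $\mathcal N_\lambda^0$; its strong limit is then an interior constrained minimizer, the multiplier argument applies verbatim, and positivity follows as before. The first solution needs no new idea past the threshold, because its minimizer sits near the origin where $\psi''_{\lambda,u}(1)>0$ is stable under small perturbations of $\lambda$, so $\mathcal N_\lambda^+$ remains a manifold there.
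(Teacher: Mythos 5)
Your handling of the regime $\lambda<\lambda^*_{a,b}$ matches the paper (coercivity, negative level on $\mathcal N^+_\lambda$, strong convergence via the fibering comparison, Lagrange multiplier valid because $\mathcal N^0_\lambda=\emptyset$), but the threshold-crossing step—the actual content of Theorem \ref{T1}—rests on an assertion you do not prove and, as stated, cannot simply assume: the strict energy gap $c^-_{\lambda^*_{a,b}}<\inf_{\mathcal N^0_{\lambda^*_{a,b}}}\mathcal E_{\lambda^*_{a,b}}$. Note that every $w\in\mathcal N^0_{\lambda^*_{a,b}}$ is a degenerate point of its own fiber, so $c^-_{\lambda^*_{a,b}}\le\mathcal E_{\lambda^*_{a,b}}(w)$ automatically; strictness would follow only if one could show that an energy-minimizing point lying \emph{on} $\mathcal N^0$ must be a critical point of $\mathcal E_{\lambda^*_{a,b}}$ (so as to contradict the non-existence of solutions in $\mathcal N^0$)—but the failure of the multiplier argument at points of $\mathcal N^0$ is precisely the obstruction the whole construction is trying to circumvent, so this is circular as it stands. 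Your heuristic for the gap also misidentifies the role of the hypotheses: the problem is subcritical, embeddings are compact, and there is no boundary-concentration phenomenon to exclude; the conditions $g<0$ and $f>f_0>0$ near $\partial\Omega$ enter (in the paper's Proposition \ref{cor,2.7}) through a pointwise argument combining the Euler equation of the constraint with the equation itself, which forces $g|u|^{p-\gamma}\ge c\,f>0$ near $\partial\Omega$ for any solution in $\mathcal N^0_{\lambda^*_{a,b}}$, impossible since $u\to 0$ at the boundary. The paper then never needs an energy gap: it uses this non-existence plus compactness of $\mathcal N^0_{\lambda^*_{a,b}}$ to get a \emph{distance} gap $\mathrm{dist}(\mathcal M^\pm_{\lambda^*_{a,b}},\mathcal N^0_{\lambda^*_{a,b}})>0$, minimizes $\mathcal J^\pm_\lambda$ over the $\lambda$-independent sets $\mathcal N^\pm_{\lambda^*_{a,b},d^\pm,c^\pm}$ (which makes the value function monotone and right-continuous at $\lambda^*_{a,b}$, Lemma \ref{prop6.1}), and uses Proposition \ref{prml} to show that sequences whose projections degenerate must approach $\mathcal N^0_{\lambda^*_{a,b}}$—hence the restricted minimizers project into $\mathcal N^\pm_\lambda$ for $\lambda\in(\lambda^*_{a,b},\lambda^*_{a,b}+\epsilon)$. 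Also missing from your scheme is any continuity of $\lambda\mapsto c^-_\lambda$ \emph{from the right} at $\lambda^*_{a,b}$: for $\lambda>\lambda^*_{a,b}$ the admissible cone $\{\lambda<\lambda_{a,b}(u)\}$ shrinks while $\mathcal J^-_\lambda$ decreases, so monotonicity of the unrestricted infimum is not clear; fixing the constraint set at $\lambda^*_{a,b}$ is exactly how the paper resolves this.

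Two further points. First, your claim that the $\mathcal N^+$ branch "needs no new idea past the threshold" is unjustified: for $\lambda>\lambda^*_{a,b}$ a minimizing sequence in $\mathcal N^+_\lambda$ may consist of projections $t^+_\lambda(u_n)u_n$ with $u_n\in\mathcal C^+$ and $\lambda_{a,b}(u_n)\downarrow\lambda$, which degenerate toward $\mathcal N^0_\lambda$ just as on the $\mathcal N^-$ side; the paper accordingly runs the same separation machinery for both signs (part (ii) of Proposition \ref{prml} and the sets $\mathcal N^+_{\lambda^*_{a,b},d^+,c^+}$). Second, the positivity step "replace $u_1$ by $|u_1|$" is not available here: in the fractional setting $\||u|\|_X\le\|u\|_X$ with strict inequality in general, so $|u|$ leaves the Nehari constraint, and—worse for the $\mathcal N^-$ solution—the formula for $\lambda_{a,b}$ shows $\lambda_{a,b}(|u|)\le\lambda_{a,b}(u)$, so $|u|$ may fall outside the admissible cone and admit no $t^-$-projection at all. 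The paper avoids this by passing to the truncated functional with nonlinearity $\lambda f(u^+)^{\gamma-1}+g(u^+)^{p-1}$, testing with $u^-$ to get $u\ge 0$, and then invoking the fractional maximum principle.
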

	In the sequal for the degenerate Kirchhoff case we established the multiplicity results even when the non-linearity looses the concave-convex behaviour with no control on the sign of $g$ near boundary. Precisely, we have the following result.
	\begin{theorem}\label{T2}
		Let $M(t)=t^{\theta-1}, \theta>1$  satisfying $2\theta<p<2^*_s$, then for $\gamma\in(2,2\theta)$ there exist two positive solutions for \eqref{p} when $\lambda\in(0,\lambda^*+\bar{\epsilon})$, where  $\bar\epsilon>0$ is sufficiently small.
	\end{theorem}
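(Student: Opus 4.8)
The plan is to produce the two solutions as minimizers of $\mathcal E_\lambda$ on the two pieces $\mathcal N^+_\lambda$ and $\mathcal N^-_\lambda$ of the Nehari set, treating the three ranges $\lambda<\lambda^*$, $\lambda=\lambda^*$ and $\lambda\in(\lambda^*,\lambda^*+\bar\epsilon)$ in turn. Since here $a=0$ and $b=1$, for $u\in\mathcal N_\lambda$ the identity $\psi'_{\lambda,u}(1)=0$ gives $\|u\|^{2\theta}_X=\lambda\int_\Omega f|u|^\gamma\,dx+\int_\Omega g|u|^p\,dx$, and substituting this into $\mathcal E_\lambda$ yields
\[
\mathcal E_\lambda(u)=\Big(\frac1{2\theta}-\frac1p\Big)\|u\|^{2\theta}_X-\lambda\Big(\frac1\gamma-\frac1p\Big)\int_\Omega f|u|^\gamma\,dx.
\]
As $\gamma<2\theta<p$ and $\int_\Omega f|u|^\gamma\,dx\le C\|u\|^\gamma_X$ by the subcritical fractional Sobolev embedding, the leading $\|u\|^{2\theta}_X$-term dominates, so $\mathcal E_\lambda$ is coercive and bounded below on $\mathcal N_\lambda$. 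I set $\alpha^\pm_\lambda=\inf_{\mathcal N^\pm_\lambda}\mathcal E_\lambda$ and record, from the fibering analysis of Proposition~2.2, that $\mathcal N^-_\lambda\subset\mathcal C^+$ and that $\|u\|_X$ is bounded away from $0$ on $\mathcal N^-_\lambda$ (the projection $t^-_\lambda(u)u$ lies beyond the maximizer of $\Phi_u$, whose location is bounded below once $\|u\|_X$ is normalized).

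For $0<\lambda<\lambda^*$ the preceding remark guarantees $\mathcal N^0_\lambda=\emptyset$, so $\mathcal N_\lambda$ is a $C^1$-manifold and the Lagrange-multiplier rule applies. Evaluating the fibering map at the projection $t^+_\lambda(u)u$ of a fixed $u\in\mathcal C^+$ (the descending initial branch in fig.~2(b)) gives $\psi_{\lambda,u}(t^+_\lambda(u))<\psi_{\lambda,u}(0)=0$, hence $\alpha^+_\lambda<0$. I would take a minimizing sequence in $\mathcal N^+_\lambda$, bounded by coercivity, extract a weak limit, pass to the limit in the lower-order integrals using the compact embedding $X\hookrightarrow L^q(\Omega)$ for $q<2^*_s$, and use weak lower semicontinuity of $\|\cdot\|_X$; the negativity of $\alpha^+_\lambda$ rules out vanishing of the weak limit, which therefore lies in $\mathcal N^+_\lambda$ and attains $\alpha^+_\lambda$. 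The same scheme handles $\mathcal N^-_\lambda$, where minimizing sequences cannot vanish since $\|u\|_X$ is bounded away from $0$; the only delicate point is strong convergence, which I would settle by the usual projection argument (were $\|u\|_X<\liminf\|u_n\|_X$, the inequality $\psi_{\lambda,u}(t)<\liminf_n\psi_{\lambda,u_n}(t)$ evaluated at $t=t^-_\lambda(u)$ would place $t^-_\lambda(u)u$ below $\alpha^-_\lambda$, a contradiction). Replacing each minimizer by its modulus, which does not increase $\|\cdot\|_X$ and leaves the weighted integrals unchanged, lets me take the solutions nonnegative, and the strong maximum principle for $(-\Delta)^s$ makes them strictly positive; they are distinct because $\mathcal N^+_\lambda\cap\mathcal N^-_\lambda=\emptyset$.

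The genuinely hard range is $\lambda\ge\lambda^*$, where $\mathcal N^0_\lambda\neq\emptyset$ and $\mathcal N_\lambda$ ceases to be a manifold, so a constrained critical point need not be a free critical point of $\mathcal E_\lambda$. Following the extremal-parameter philosophy of Il'yasov and Silva, I would exploit Proposition~\ref{luch} (continuity, $0$-homogeneity and attainment of $\lambda^*=\inf_{X\cap\mathcal C^+}\lambda(u)$) together with the weak lower semicontinuity of $u\mapsto\lambda(u)$ from Remark~\ref{lwsc}. The key is to show that the minimizers over $\mathcal N^+_\lambda$ and $\mathcal N^-_\lambda$ still satisfy the open conditions $\psi''_{\lambda,u}(1)>0$, respectively $<0$, so that they avoid $\mathcal N^0_\lambda$; near such a point $\mathcal N_\lambda$ is locally a manifold and the multiplier argument of the previous paragraph carries over verbatim. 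For a minimizing sequence the weak limit $u$ obeys $\lambda(u)\ge\lambda^*$ by lower semicontinuity, and the quantitative crux—where I expect the main obstacle to lie—is to produce $\bar\epsilon>0$ such that for every $\lambda<\lambda^*+\bar\epsilon$ the limit in fact satisfies the strict inequality $\lambda<\lambda(u)$, so that $u$ possesses two genuine projections $t^\pm_\lambda(u)u$ and the minimizer cannot degenerate onto $\mathcal N^0_\lambda$. I anticipate obtaining this threshold from the continuity of $\lambda(\cdot)$ and the isolation of the extremal directions realizing $\lambda^*$, after which existence, positivity and distinctness of the two solutions follow exactly as in the case $\lambda<\lambda^*$.
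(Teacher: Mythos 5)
Your treatment of the range $0<\lambda<\lambda^*$ matches the paper's Section 4 in substance (coercivity, negative infimum on $\mathcal N^+_\lambda$, compact embeddings, strong convergence via the projection argument), but for $\lambda\geq\lambda^*$ your proposal stops exactly where the real work begins, and the ingredient you hope for is not the one that makes the argument run. You want to deduce, from ``continuity of $\lambda(\cdot)$ and the isolation of the extremal directions,'' that weak limits $u$ of minimizing sequences satisfy the strict inequality $\lambda<\lambda(u)$ for every $\lambda<\lambda^*+\bar\epsilon$. No such isolation is available (the paper only obtains compactness of $\mathcal N^0_{\lambda^*}$), and in general nothing prevents a minimizing sequence on $\mathcal N^\pm_\lambda$ from degenerating onto the cone over $\mathcal N^0_{\lambda^*}$, where $\lambda(u)=\lambda^*\leq\lambda$. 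What actually breaks this deadlock in the paper is a PDE-level fact that is entirely absent from your sketch: \emph{no weak solution of $(P_{\lambda^*})$ can lie in $\mathcal N^0_{\lambda^*}$} (Proposition 5.2 and, for the degenerate case of this theorem, the remark following it). This is proved by combining the equation satisfied by elements of $\mathcal N^0_{\lambda^*}$ with the equation $(P_{\lambda^*})$ itself to eliminate the Kirchhoff term, which in the degenerate case yields the pointwise identity $g|u|^{p-\gamma}=\lambda^*\tfrac{2\theta-\gamma}{p-2\theta}f$; this is absurd near $\partial\Omega$ because $u\to0$ there while $f\geq f_0>0$ by assumption (F). That non-existence result is what forces the minimizers at $\lambda=\lambda^*$ (obtained in the paper as strong limits of \emph{solutions} along $\lambda_n\uparrow\lambda^*$, using the monotone convergence $\hat{\mathcal J}^\pm_{\lambda_n}\to\hat{\mathcal J}^\pm_{\lambda^*}$ of Proposition 5.5) to avoid $\mathcal N^0_{\lambda^*}$, and for $\lambda>\lambda^*$ it is also what guarantees $d^\pm=\mathrm{dist}(\mathcal M^\pm_{\lambda^*},\mathcal N^0_{\lambda^*})>0$, so that the paper can pose the minimization \emph{not} on $\mathcal N^\pm_\lambda$ (as you propose) but on truncated subsets $\mathcal N^\pm_{\lambda^*,d,c}$ of the Nehari set at the extremal parameter kept at distance at least $d$ from $\mathcal N^0_{\lambda^*}$, project their elements into $\mathcal N^\pm_\lambda$, and invoke Proposition 6.1 (sequences whose projections degenerate must approach $\mathcal N^0_{\lambda^*}$) to show the projected minimizers stay in the open regions $\psi''_{\lambda,u}(1)\gtrless0$. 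Without these two pillars, your plan for $\lambda\geq\lambda^*$ is a statement of intent rather than a proof.

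A second, smaller but genuine gap is your positivity step. Replacing a minimizer by its modulus works for the local Laplacian but fails here, as the paper explicitly warns: for the Gagliardo seminorm one only has $\||u|\|_X\leq\|u\|_X$, possibly strictly, so $\mathcal E_\lambda(|u|)\neq\mathcal E_\lambda(u)$ in general and, worse, $|u|$ need not lie on $\mathcal N_\lambda$ at all --- lowering the norm lowers $\lambda_{a,b}(\cdot)$ (visible from the explicit degenerate formula for $\lambda(u)$), so the fibering map of $|u|$ may have lost its local maximum and $t^-_\lambda(|u|)$ may simply not exist when $\lambda$ is close to $\lambda(u)$. The paper instead perturbs the nonlinearity to $\lambda f(u^+)^{\gamma-1}+g(u^+)^{p-1}$, tests the resulting Euler equation with $u^-$, uses the elementary inequality $(u(x)-u(y))(u^-(x)-u^-(y))\leq-|u^-(x)-u^-(y)|^2$ to conclude $u^-\equiv0$, and then applies the fractional maximum principle to get strict positivity. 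You should adopt that route, or justify separately why your modulus replacement lands back in the correct Nehari component.
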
 
	\begin{remark} The results obtained in this paper contributes to the literature in the following way. We have  complimented the work of \cite{sarika,Silva} for nonlocal fractional Kirchhoff problem and the work of \cite{sarika} has further extended for the parameter lying beyond the extremal parameter value.  Even when the nonlinearity lacks the sublinear-superlinear combination, we have shown multiplicity results for degenerate Kirchhoff case.
	\end{remark}
	Observe that, from Proposition \ref{p,2.2}, $u$ may has its projection in $\mathcal N_\lambda^-$ only if $u\in \mathcal C^+$ but $\mathcal N_\lambda^+$ can have projections of $u$ irrespective of $u$ lying in $\mathcal C^+$ or not. We require this distinction on later stages while studying the minimization problems. Therefore,  for each $\lambda>0$,  we define the following sets noting this distinction
	\begin{equation}\label{capnl}
		\begin{aligned}
			\mathcal{\hat{N}}_{\lambda}&=\left\{u\in X\setminus\{0\}: u\in \mathcal{C}^+  \ \textrm{and} \ \lambda<\lambda_{a,b}(u) \right\},\\
			\hat {\mathcal{N}}^{+}_{\lambda}&=\left\{u\in X\setminus\{0\}:u\in \mathcal{C}^- \right\}.
		\end{aligned}
	\end{equation}
	
	\begin{remark} \label{remc}One can observe that when $u\in \mathcal{\hat{N}}_{\lambda}\cup \hat {\mathcal{N}}^{+}_{\lambda}$ then $tu\in \mathcal{\hat{N}}_{\lambda}\cup \hat {\mathcal{N}}^{+}_{\lambda}$ as $\lambda_{a,b}(u)$ is $0-$ homogeneous and thus we can say	that the set $\mathcal{\hat{N}}_{\lambda}\cup \hat {\mathcal{N}}^{+}_{\lambda}$ represents a cone generated by $\mathcal{N}_{\lambda}^+\cup \mathcal{N}_{{\lambda}}^-$, that is, 
		\[\mathcal{\hat{N}}_{\lambda}\cup \hat {\mathcal{N}}^{+}_{\lambda}= \{tu: t>0, u\in \mathcal{N}_{\lambda}^+\cup \mathcal{N}_{\lambda}^- \}.\] 
	\end{remark}

	To prove our main result, for all $\lambda>0$, let us consider the following minimization problems
	\[\hat{\mathcal{J}}^{-}_{\lambda}=\textrm{inf}\{\mathcal{J}^{-}_{\lambda}(u):u\in  \mathcal{N}^{-}_{\lambda}\}   \ \ \textrm{and} \ \  
	\hat{\mathcal{J}}^{+}_{\lambda}=\textrm{inf}\{\mathcal{J}^{+}_{\lambda}(u):u\in  \mathcal{N}^{+}_{\lambda}\}, 
	\]
	where functions $\mathcal{J}^{-}_{\lambda}: \hat{\mathcal{N}}_{\lambda}\rightarrow \mathbb{R}$ and  $\mathcal{J}^{+}_{\lambda}: \mathcal{\hat{N}}_{\lambda} \cup \hat {\mathcal{N}}^{+}_{\lambda} \rightarrow \mathbb{R}$ are defined as follows
	\[\mathcal{J}^{-}_{\lambda}(u)= \mathcal E_{\lambda} (t^{-}_\lambda(u)u) \ \  \textrm{and} \ \ \ \mathcal{J}^{+}_{\lambda}(u)= \mathcal E_{\lambda} (t^{+}_\lambda(u)u).\] 
	We have the following observations about ${\mathcal J}^{\pm}_{\lambda}$ as $1<\gamma<p$ and $2<2\theta<p$ .
	\begin{proposition}
		The  functional ${\mathcal J}^{\pm}_{\lambda}$ are coercive on Nehari set.
	\end{proposition}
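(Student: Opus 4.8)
The plan is to reduce the coercivity of $\mathcal J^{\pm}_\lambda$ to that of $\mathcal E_\lambda$ restricted to the Nehari manifold. The infima defining $\hat{\mathcal J}^{\pm}_\lambda$ are taken over $\mathcal N^{\pm}_\lambda$, and for $u\in\mathcal N^{\pm}_\lambda$ one has $t^{\pm}_\lambda(u)=1$ (by the uniqueness of the projections in Proposition \ref{p,2.2}), so that $\mathcal J^{\pm}_\lambda(u)=\mathcal E_\lambda(u)$ there. Indeed $\mathcal J^{\pm}_\lambda$ is $0$-homogeneous, so the asserted coercivity is to be read on the submanifolds $\mathcal N^{\pm}_\lambda\subset\mathcal N_\lambda$. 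Thus it suffices to show that $\mathcal E_\lambda(u)\to+\infty$ as $\|u\|_X\to\infty$ along $u\in\mathcal N_\lambda$.

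First, for $u\in\mathcal N_\lambda$ the Nehari identity $a\|u\|^2_X+b\|u\|^{2\theta}_X=\lambda\int_\Omega f|u|^\gamma\,dx+\int_\Omega g|u|^p\,dx$ lets me solve for $\int_\Omega g|u|^p\,dx$ and substitute it into $\mathcal E_\lambda$, eliminating the sign-indefinite $p$-th order term. After collecting terms this yields
\[
\mathcal E_\lambda(u)=a\left(\tfrac12-\tfrac1p\right)\|u\|^2_X+b\left(\tfrac{1}{2\theta}-\tfrac1p\right)\|u\|^{2\theta}_X-\lambda\left(\tfrac1\gamma-\tfrac1p\right)\int_\Omega f|u|^\gamma\,dx.
\]
Since $2<2\theta<p$ and $\gamma<p$, all three bracketed coefficients are strictly positive, and the only negative contribution is the last integral.

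Next I would control that integral by the embedding. Because $\gamma<2^*_s$ (indeed $\gamma<2$ in the non-degenerate case and $\gamma<2\theta<2^*_s$ in the degenerate case), the continuous embedding $X\hookrightarrow L^\gamma(\Omega)$ together with $f\in L^\infty(\Omega)$ gives $\int_\Omega f|u|^\gamma\,dx\le \|f\|_{L^\infty}C^\gamma\|u\|^\gamma_X$ for a Sobolev constant $C>0$. Hence
\[
\mathcal E_\lambda(u)\ge a\left(\tfrac12-\tfrac1p\right)\|u\|^2_X+b\left(\tfrac{1}{2\theta}-\tfrac1p\right)\|u\|^{2\theta}_X-\lambda\left(\tfrac1\gamma-\tfrac1p\right)\|f\|_{L^\infty}C^\gamma\|u\|^\gamma_X.
\]
In the non-degenerate case ($a>0$, $1<\gamma<2$) the positive $\|u\|^2_X$ term has strictly larger exponent than the negative $\|u\|^\gamma_X$ term; in the degenerate case ($a=0$, $2<\gamma<2\theta$) the positive $\|u\|^{2\theta}_X$ term plays this role. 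Either way the surviving leading positive power beats $\gamma$, so the right-hand side tends to $+\infty$ as $\|u\|_X\to\infty$, which gives the coercivity and hence the proposition.

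The argument is essentially computational, and I do not expect a genuine obstacle; the one point that must be checked carefully is precisely the exponent comparison, namely that in each regime the surviving leading positive power ($2$ when $a>0$, $2\theta$ when $a=0$) strictly exceeds $\gamma$. This is exactly what the standing hypotheses $1<\gamma<2$ (non-degenerate) and $2<\gamma<2\theta$ (degenerate) guarantee. Note also that the sign-changing behaviour of $g$ causes no difficulty here, since the $g$-term has been removed through the Nehari constraint before any estimate is made.
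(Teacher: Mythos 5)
Your proof is correct, and it is precisely the argument the paper leaves implicit: the paper states this proposition without proof (as an observation following from $1<\gamma<p$ and $2<2\theta<p$), but the key identity you derive by eliminating the $g$-term through the Nehari constraint, namely $\mathcal E_\lambda(u)=a\bigl(\tfrac12-\tfrac1p\bigr)\|u\|^2_X+b\bigl(\tfrac{1}{2\theta}-\tfrac1p\bigr)\|u\|^{2\theta}_X-\lambda\bigl(\tfrac1\gamma-\tfrac1p\bigr)\int_\Omega f|u|^\gamma\,dx$, together with the H\"older--Sobolev bound on the $f$-term, is exactly what the authors use in the proof of Lemma \ref{l3.1} (see \eqref{bound}). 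Your preliminary reduction via $t^{\pm}_\lambda(u)=1$ on $\mathcal N^{\pm}_\lambda$ and the remark on $0$-homogeneity correctly pin down the sense in which coercivity must be read, so nothing is missing.
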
 
	The following lemma is about the behaviour of energy functional with respect to the parameter $\lambda$. 
	\begin{lemma}\label{p:2.10}
		Let $u\in X\setminus \{0\}$. Let $I$ be an open interval in $\mathbb{R}^+$ such that $t^{\pm}_\lambda$ are well defined for all $\lambda\in I$. Then,
		\begin{enumerate}
			\item [(i)]the functions $I\ni \lambda\rightarrow t^{\pm}_\lambda(u)$ are $C^1$. Moreover, $I\ni \lambda\rightarrow t^{-}_\lambda(u)$ is decreasing while $I\ni \lambda\rightarrow t^{+}_\lambda(u)$ is increasing.
			\item [(ii)] the functions $I\ni \lambda\rightarrow {\mathcal J}^{\pm}_{\lambda}(u)$ are $C^1$ and  decreasing.
		\end{enumerate}
	\end{lemma}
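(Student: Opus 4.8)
The plan is to read both $t^\pm_\lambda(u)$ as implicitly defined roots of the stationarity equation $\psi'_{\lambda,u}(t)=0$, to differentiate them in $\lambda$ via the implicit function theorem for part (i), and then to obtain the monotonicity of the energy along the projections by an envelope-type argument for part (ii). Throughout fix $u\in X\setminus\{0\}$ and write $A:=\int_\Omega f|u|^\gamma\,dx$, which is strictly positive since $f$ is positive and $u\neq 0$. Recall that $tu\in\mathcal N_\lambda$ exactly when $\psi'_{\lambda,u}(t)=0$, and that at such a critical point the sign of $\psi''_{\lambda,u}(t)$ coincides with that of $\Phi'_u(t)$ (since $\psi'_{\lambda,u}(t)=t^{\gamma-1}(\Phi_u(t)-\lambda A)$); in particular $t^+_\lambda(u)<t_*<t^-_\lambda(u)$, with $\psi''_{\lambda,u}(t^+_\lambda)>0$ and $\psi''_{\lambda,u}(t^-_\lambda)<0$, where $t_*$ is the unique maximiser of $\Phi_u$ identified in Proposition \ref{p,2.2}.

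For part (i), set $G(t,\lambda)=\psi'_{\lambda,u}(t)$, a smooth function of $(t,\lambda)$ on $(0,\infty)\times I$. On $I$ the roots $t^\pm_\lambda(u)$ are well defined and distinct, hence non-degenerate, i.e. $\partial_t G(t^\pm_\lambda,\lambda)=\psi''_{\lambda,u}(t^\pm_\lambda)\neq 0$ (positive at $t^+_\lambda$, negative at $t^-_\lambda$). The implicit function theorem then gives that $\lambda\mapsto t^\pm_\lambda(u)$ are $C^1$ on $I$ with
\[
\frac{d}{d\lambda}t^\pm_\lambda(u)=-\frac{\partial_\lambda G(t^\pm_\lambda,\lambda)}{\partial_t G(t^\pm_\lambda,\lambda)}=\frac{(t^\pm_\lambda)^{\gamma-1}A}{\psi''_{\lambda,u}(t^\pm_\lambda)},
\]
using $\partial_\lambda G(t,\lambda)=-t^{\gamma-1}A$. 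Since the numerator is strictly positive, the sign of the derivative is that of $\psi''_{\lambda,u}(t^\pm_\lambda)$: hence $\lambda\mapsto t^+_\lambda(u)$ is increasing and $\lambda\mapsto t^-_\lambda(u)$ is decreasing.

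For part (ii), observe that $\mathcal J^\pm_\lambda(u)=\mathcal E_\lambda(t^\pm_\lambda(u)\,u)=\psi_{\lambda,u}(t^\pm_\lambda(u))$ is a composition of the $C^1$ maps $\lambda\mapsto t^\pm_\lambda(u)$ from part (i) with the smooth map $(t,\lambda)\mapsto\psi_{\lambda,u}(t)$, hence $C^1$ on $I$. Differentiating and using that $t^\pm_\lambda$ is a critical point of $\psi_{\lambda,u}$ (so the $\partial_t$-contribution vanishes),
\[
\frac{d}{d\lambda}\mathcal J^\pm_\lambda(u)=\psi'_{\lambda,u}(t^\pm_\lambda)\,\frac{d t^\pm_\lambda}{d\lambda}+\partial_\lambda\psi_{\lambda,u}(t^\pm_\lambda)=-\frac{(t^\pm_\lambda)^\gamma}{\gamma}\,A<0,
\]
because $\partial_\lambda\psi_{\lambda,u}(t)=-\tfrac{t^\gamma}{\gamma}A$ with $A>0$ and $t^\pm_\lambda>0$. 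Thus both $\lambda\mapsto\mathcal J^\pm_\lambda(u)$ are strictly decreasing.

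The computations themselves are short; the only point requiring care is the applicability of the implicit function theorem, which rests on the non-degeneracy $\psi''_{\lambda,u}(t^\pm_\lambda)\neq 0$. This is precisely where the hypothesis that $t^\pm_\lambda$ be well defined throughout the open interval $I$ enters: it excludes the critical value $\lambda=\lambda_{a,b}(u)$ at which the two projections coalesce into the single degenerate root $t^0_\lambda\in\mathcal N^0_\lambda$ (where $\psi''_{\lambda,u}=0$), so on $I$ the two branches stay separated and the denominator never vanishes.
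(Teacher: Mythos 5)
Your proposal is correct and follows exactly the route the paper takes: the paper's entire proof is the single line ``The proof follows from implicit function theorem,'' and your argument is precisely that approach carried out in detail (IFT applied to $G(t,\lambda)=\psi'_{\lambda,u}(t)$ with the non-degeneracy $\psi''_{\lambda,u}(t^\pm_\lambda)\neq 0$, followed by the envelope computation $\frac{d}{d\lambda}\mathcal J^\pm_\lambda(u)=\partial_\lambda\psi_{\lambda,u}(t^\pm_\lambda)=-\frac{(t^\pm_\lambda)^\gamma}{\gamma}\int_\Omega f|u|^\gamma\,dx<0$). Your sign bookkeeping for the monotonicity of $t^\pm_\lambda$ and the observation that well-definedness on the open interval $I$ is what keeps the two branches away from the degenerate root are both accurate, so nothing is missing.
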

	\begin{proof} The proof follows from implicit function theorem.
	\end{proof}
	\begin{remark}
		When $I=(0,\lambda^*_{a, b})$, both the claims in Lemma \ref{p:2.10} remain true, independent of $u\in X$.
	\end{remark}
	
	In order to prove our main result we have designed the following three sections dealing with the case $\lambda<\lambda^*_{a,b}, \lambda=\lambda^*_{a,b}$ and $\lambda>\lambda^*_{a,b}$ respectively. 
	\section{\textbf{Existence of  solutions when $0<\lambda<\lambda^*_{a,b}$}}
	In order to prove Theorem $\ref{T1}$ first we have proved existence of at least two solutions of \eqref{p} for $\lambda\in (0,\lambda^*_{a,b})$. For that,  we begin by showing that the functional $\mathcal E_{\lambda}$ achieves its minimizers in the Nehari decomposition $\mathcal{N}^{-}_{\lambda}$ and $\mathcal{N}^{+}_{\lambda}$ for all $\lambda\in(0,\lambda^*_{a,b})$  for both  degenerate and non-degenerate cases.  We close this section by verifying  that these minimizers are the solutions to the problem in $\eqref{p}$.
	
	\begin{lemma}\label{l3.1}
		For each $0<\lambda<\lambda^*_{a,b}$,	there exists $ v_\lambda \in \mathcal{N}^{+}_{\lambda}$  such that  $\mathcal{J}^{+}_{\lambda}(v_\lambda)=\hat{\mathcal{J}}^{+}_{\lambda}$.
	\end{lemma}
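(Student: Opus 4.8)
The plan is to run the direct method over $\mathcal N^+_\lambda$. Take a minimizing sequence $\{u_n\}\subset\mathcal N^+_\lambda$, so that $\mathcal E_\lambda(u_n)=\mathcal J^+_\lambda(u_n)\to\hat{\mathcal J}^+_\lambda$ (recall $t^+_\lambda(u_n)=1$ on $\mathcal N^+_\lambda$, hence $\mathcal J^+_\lambda=\mathcal E_\lambda$ there). Since $\mathcal J^+_\lambda$ is coercive on the Nehari set, $\{u_n\}$ is bounded in the Hilbert space $X$; passing to a subsequence, $u_n\rightharpoonup v_\lambda$ in $X$, and the compact embeddings $X\hookrightarrow L^\gamma(\Omega),L^p(\Omega)$ (valid as $\gamma,p<2^*_s$) give $\int_\Omega f|u_n|^\gamma dx\to\int_\Omega f|v_\lambda|^\gamma dx$ and $\int_\Omega g|u_n|^p dx\to\int_\Omega g|v_\lambda|^p dx$, while $\|v_\lambda\|_X\le\liminf_n\|u_n\|_X$.

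First I would record two facts. (a) $\hat{\mathcal J}^+_\lambda<0$: for $u$ with $\int_\Omega f|u|^\gamma dx>0$ the fibering map $\psi_{\lambda,u}(t)$ is dominated near $t=0^+$ by $-\tfrac{\lambda t^\gamma}{\gamma}\int_\Omega f|u|^\gamma dx$ (as $\gamma<2$ in the non-degenerate case and $\gamma<2\theta$ in the degenerate case), so $\psi_{\lambda,u}$ starts at $0$ and strictly decreases; its first local minimizer $t^+_\lambda(u)$ thus satisfies $\mathcal J^+_\lambda(u)=\psi_{\lambda,u}(t^+_\lambda(u))<0$. (b) Using the Nehari identity to eliminate $\int_\Omega g|u|^p dx$, one has on $\mathcal N_\lambda$
\[
\mathcal E_\lambda(u)=a\Big(\tfrac12-\tfrac1p\Big)\|u\|^2_X+b\Big(\tfrac1{2\theta}-\tfrac1p\Big)\|u\|^{2\theta}_X-\lambda\Big(\tfrac1\gamma-\tfrac1p\Big)\int_\Omega f|u|^\gamma dx,
\]
where the first two coefficients are positive. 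If $v_\lambda=0$, then $\int_\Omega f|u_n|^\gamma dx\to0$, forcing $\liminf_n\mathcal E_\lambda(u_n)\ge0$ and contradicting (a). Hence $v_\lambda\neq0$ and $\int_\Omega f|v_\lambda|^\gamma dx>0$.

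The main obstacle is upgrading weak to strong convergence. I would argue by contradiction, supposing $\|v_\lambda\|^2_X<s_*:=\lim_n\|u_n\|^2_X$ along a subsequence. Since $\lambda<\lambda^*_{a,b}\le\lambda_{a,b}(v_\lambda)$, Proposition \ref{p,2.2} supplies the projection $t_0:=t^+_\lambda(v_\lambda)$ with $t_0v_\lambda\in\mathcal N^+_\lambda$, so $\mathcal J^+_\lambda(v_\lambda)=\psi_{\lambda,v_\lambda}(t_0)\ge\hat{\mathcal J}^+_\lambda$. Write $c:=\lambda\int_\Omega f|v_\lambda|^\gamma dx$ and let $\widetilde\Phi$ be the single-humped map $\Phi$ built from the limiting squared norm $s_*$ together with the (convergent) weight integrals. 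The constraint $t^+_\lambda(u_n)=1$ passes to the limit as $\widetilde\Phi(1)=c$ with $t=1$ the first (increasing-branch) crossing, so $\widetilde\Phi\le c$ on $(0,1]$. On the other hand $\|v_\lambda\|^2_X<s_*$ yields $\Phi_{v_\lambda}(t)<\widetilde\Phi(t)$ for all $t>0$; hence the set where $\Phi_{v_\lambda}$ exceeds the level $c$ cannot meet $(0,1]$ and must lie in $(1,\infty)$, forcing $t_0=t^+_\lambda(v_\lambda)>1$. Consequently $\psi_{\lambda,v_\lambda}$ is strictly decreasing on $(0,t_0)\ni1$, so by weak lower semicontinuity $\psi_{\lambda,v_\lambda}(t_0)<\psi_{\lambda,v_\lambda}(1)\le\liminf_n\psi_{\lambda,u_n}(1)=\hat{\mathcal J}^+_\lambda$, contradicting $\psi_{\lambda,v_\lambda}(t_0)\ge\hat{\mathcal J}^+_\lambda$. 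I expect this comparison forcing $t^+_\lambda(v_\lambda)>1$, and the role of $\lambda<\lambda^*_{a,b}$ in guaranteeing the projection exists, to be the crux.

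It follows that $\|u_n\|_X\to\|v_\lambda\|_X$, which with $u_n\rightharpoonup v_\lambda$ gives $u_n\to v_\lambda$ strongly in $X$. Passing to the limit in the Nehari identity yields $v_\lambda\in\mathcal N_\lambda$ with $\mathcal E_\lambda(v_\lambda)=\hat{\mathcal J}^+_\lambda$, and since $\lambda<\lambda^*_{a,b}$ forces $\mathcal N^0_\lambda=\emptyset$, the limit $\psi''_{\lambda,v_\lambda}(1)=\lim_n\psi''_{\lambda,u_n}(1)\ge0$ is in fact strictly positive, so $v_\lambda\in\mathcal N^+_\lambda$ and $\mathcal J^+_\lambda(v_\lambda)=\hat{\mathcal J}^+_\lambda$, as required. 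The degenerate and non-degenerate cases run in parallel throughout, differing only in which low-order power dominates near $t=0$.
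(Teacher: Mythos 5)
Your proposal is correct and follows essentially the same route as the paper: minimizing sequence, negativity of the infimum, nonzero weak limit, a contradiction argument showing that failure of strong convergence forces the projection $t^{+}_{\lambda}(v_\lambda)$ strictly past $1$ (so that monotonicity of the fibering map on $(0,t^{+}_{\lambda}(v_\lambda))$ together with weak lower semicontinuity drives the projected energy below the infimum), and finally $\mathcal{N}^{0}_{\lambda}=\emptyset$ to place the limit in $\mathcal{N}^{+}_{\lambda}$. The only differences are cosmetic: you obtain $t^{+}_{\lambda}(v_\lambda)>1$ by comparing $\Phi_{v_\lambda}$ with the limiting profile $\widetilde{\Phi}$ of the $\Phi_{u_n}$, whereas the paper evaluates $\psi'_{\lambda,v_n}$ at $t^{+}_{\lambda}(v_\lambda)$ and uses the sign information coming from $v_n\in\mathcal{N}^{+}_{\lambda}$, and you prove $\hat{\mathcal{J}}^{+}_{\lambda}<0$ via the fibering-map geometry near $t=0$ where the paper uses an algebraic estimate on the Nehari set.
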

	\begin{proof}
		Let $\{v_n\} \subset \mathcal{N}^{+}_{\lambda}$ be an arbitrary minimizing sequence  for ${\mathcal{J}}^{+}_{\lambda}$, that is ${\mathcal{J}}^{+}_{\lambda}(v_n)\rightarrow \hat{\mathcal{J}}^{+}_{\lambda}$. First we claim that $\{v_n\}$ is bounded. In fact, 	when $v_n\in \mathcal{N}^{+}_{\lambda}$ using  H\"older inequality, we have
		\begin{equation}\label{bound}
			b(p-2\theta)\|v_n\|^{2\theta}_{X}<a(p-2)\|v_n\|^2_{X}+	b(p-2\theta)\|v_n\|^{2\theta}_{X}<
			\lambda(p-\gamma)S^{-\frac{\gamma}{2}} \|f\|_{\frac{2^*_s}{2^*_s -\gamma}}\|v_n\|^{\gamma}_{X},
		\end{equation}
		implies that sequence $\{v_n\}$ is bounded.
		Moreover, one can observe that when $v\in\mathcal{N}^{-}_{\lambda}$, we have
		\begin{align*}
			\mathcal E_{\lambda}(v)&=a\left(\frac{1}{2}-\frac{1}{p}\right)\|v_n\|^2_{X}+b\left(\frac{1}{2\theta}-\frac{1}{p}\right)\|v_n\|^{2\theta}_{X}-\left(\frac{1}{\gamma}-\frac{1}{p}\right)\lambda\int_\Omega f|v_n|^{\gamma}dx\\
			&<			a(p-2)\left(\frac{\gamma-2}{2p\gamma}\right)\|v\|^2_{X}+b(p-2\theta)\left(\frac{\gamma-2\theta}{2p\theta\gamma}\right)\|v\|^{2\theta}_{X}.
		\end{align*} For for non-degenerate case (when $\gamma\in(1,2)$), we get $\mathcal E_{\lambda}(v)<0.$
		and hence,  $\hat{\mathcal{J}}^{+}_{\lambda}<0$. 
		For degenerate case (when $\gamma\in(2,2\theta)$), we get
		\[\mathcal E_{\lambda}(v)<(p-2\theta)\left(\frac{\gamma-2\theta}{2p\theta\gamma}\right)\|v\|^{2\theta}_{X}<0,\] and hence $\hat{\mathcal{J}}^{+}_{\lambda}<0$.
		Moreover,  $\hat{\mathcal{J}}^{+}_{\lambda}>-\infty$ from definition of $\mathcal E_{\lambda}$. Thus  up to a subsequence, $v_n\rightharpoonup v_\lambda\geq 0$ in $X$. Let us prove $v_{\lambda}\not \equiv 0$. If $v_{\lambda}\equiv0$, then $0=\mathcal E_{\lambda}(v_{\lambda})\leq \displaystyle \liminf_{n\to \infty}\mathcal E_{\lambda}(v_n)=\hat{\mathcal{J}}^{+}_{\lambda}<0$, a contradiction and hence, $v_{\lambda}\not \equiv 0$. Observe that, irrespective of $v_\lambda\in \mathcal C^\pm$, there exists $t^{+}_{\lambda}(v)>0$ such that $t^{+}_{\lambda}(v_\lambda)v_\lambda\in \mathcal{N}^{+}_{\lambda}$ and $\psi_{\lambda,v_{\lambda}}$ is decreasing in $(0,t^{+}_{\lambda}(v_\lambda))$  with $\psi'_{\lambda,v_{\lambda}}(t^{+}_{\lambda}(v_\lambda))=0$.
		
		Our claim is $v_n \rightarrow v_{\lambda}$ in $X$. Suppose on contrary $v_n\rightharpoonup v$ then $v_n\rightarrow v$ in $L^q(\Omega)$ where $q\in [1,2^*_{s})$ and  $|v_n(x)|\leq h(x)$ a.e. in $\Omega$ for some $h(x)\in L^q(\Omega)$. Under the assumptions $(F)$ and $(G)$, using H\"older inequality and Lebesgue dominated convergence theorem, we obtain
		\[\int_{ \Omega}f(|v_n|^{\gamma}-|v_\lambda|^{\gamma})\rightarrow0,\;\;
		\int_\Omega g(x)|v_n(x)|^p\rightarrow \int_\Omega g(x)|v_\lambda(x)|^p.\] From weak lower semicontinuity of norm, $\|v_\lambda\|_{X}<\liminf_{n\to\infty}\|v_n\|_{X}$. Using all this information, we have 
		\begin{align*}
			&	\liminf_{n\rightarrow\infty} \psi'_{\lambda,v_n}(t^{+}_{\lambda}(v_{\lambda}))=	\liminf_{n\rightarrow\infty}\left[a(t^{+}_{\lambda})(v_\lambda)\|v_n\|^{2}_{X}+b (t^{+}_{\lambda})^{2\theta-1}(v_\lambda)\|v_n\|^{2\theta}_{X}\right]\\
			&-\liminf_{n\to \infty}\left[(t^{+}_{\lambda}(v_{\lambda}))^{\gamma-1}\lambda\int_{ \Omega}f|v_n|^{\gamma}dx+(t^{+}_{\lambda}(v_{\lambda}))^{p-1}\int_\Omega g(x)|u_n(x)|^p\right]\\
			&>a(t^{+}_{\lambda})(v_\lambda)\|v_\lambda\|^{2}_{X}+b (t^{+}_{\lambda})^{2\theta-1}(v_\lambda)\|v_\lambda\|^{2\theta}_{X}-(t^{+}_{\lambda}(v_{\lambda}))^{\gamma-1}\lambda\int_{ \Omega}f|v_\lambda|^{\gamma}dx\\
			&-(t^{+}_{\lambda}(v_{\lambda}))^{p-1}\int_\Omega g(x)|u_\lambda(x)|^p\\
			&= \psi'_{\lambda,v_\lambda}(t^{+}_{\lambda}(v_{\lambda}))=0.
		\end{align*}

		Therefore, $\psi'_{\lambda,v_n}(t^{+}_{\lambda}(v_{\lambda}))>0$, for sufficiently large $n$. Since $v_n\in \mathcal{N}^{+}_{\lambda}$, by possible fibering map it is easy to see that $\psi'_{\lambda,v_n}(t)<0$ for $0<t<1$ and $\psi '_{\lambda,v_n}(1)=0$ ($t^{+}_{\lambda}(v_n)=1$), therefore $t^{+}_{\lambda}(v_{\lambda})>1=t^{+}_{\lambda}(v_{n})$ for large $n$.
		Thus, we have
		\[\hat{\mathcal{J}}^{+}_{\lambda}\leq{\mathcal E_\lambda}((t^{+}_{\lambda}(v_{\lambda})v_{\lambda}))=\mathcal{J}^{+}_{\lambda}(v_{\lambda})<\liminf_{n\rightarrow\infty}\mathcal{J}^{+}_{\lambda}(v_n)=\hat{\mathcal{J}}^{+}_{\lambda},\]
		which is an absurd. Hence $v_{n}\rightarrow v_{\lambda}$ in $X$. As a consequence of strong convergence, we have
		\[\lim_{n\rightarrow \infty}\psi^{'}_{\lambda,v_n}(1)=\psi^{'}_{\lambda,v_{\lambda}}(1)=0 \ \ \textrm{and} \ \ \lim_{n\rightarrow \infty}\psi^{''}_{\lambda,v_n}(1)=\psi^{''}_{\lambda,v_{\lambda}}(1)\geq 0 ,\]
		and since $\mathcal{N}^{0}_{\lambda}=\emptyset$ for $0<\lambda<\lambda^*_{a,b}$, we have
		$v_{\lambda}\in \mathcal{N}^{+}_{\lambda}$ and $\mathcal{J}^{+}_{\lambda}(v_{\lambda})=\hat{\mathcal{J}}^{+}_{\lambda}.$
	\end{proof}
	
	\begin{lemma}\label{l3.2}
		For each $0<\lambda<\lambda^*_{a,b}$,	there exists $u_\lambda\in \mathcal{N}^{-}_{\lambda}$  such that $\mathcal{J}^{-}_{\lambda}(u_\lambda)=\hat{\mathcal{J}}^{-}_{\lambda}$.
	\end{lemma}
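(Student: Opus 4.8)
The plan is to mirror the proof of Lemma \ref{l3.1}, now minimizing $\mathcal{E}_\lambda$ over $\mathcal{N}^-_\lambda$. I start with a minimizing sequence $\{u_n\}\subset \mathcal{N}^-_\lambda$ for $\mathcal{J}^-_\lambda$; since $t^-_\lambda(u_n)=1$ on $\mathcal{N}^-_\lambda$ we have $\mathcal{J}^-_\lambda(u_n)=\mathcal{E}_\lambda(u_n)$, so coercivity of $\mathcal{J}^-_\lambda$ forces $\{u_n\}$ to be bounded. Up to a subsequence $u_n\rightharpoonup u_\lambda$ in $X$ and $u_n\to u_\lambda$ in $L^q(\Omega)$ for every $q\in[1,2^*_s)$; in particular $\int_\Omega f|u_n|^\gamma\,dx\to\int_\Omega f|u_\lambda|^\gamma\,dx$ and $\int_\Omega g|u_n|^p\,dx\to\int_\Omega g|u_\lambda|^p\,dx$.

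Next I would locate the weak limit inside $\mathcal{C}^+$. For $u\in\mathcal{N}^-_\lambda$ the condition $\psi''_{\lambda,u}(1)<0$, after eliminating $\int_\Omega f|u|^\gamma\,dx$ through the Nehari identity $\psi'_{\lambda,u}(1)=0$, reads $a(2-\gamma)\|u\|^2_X+b(2\theta-\gamma)\|u\|^{2\theta}_X<(p-\gamma)\int_\Omega g|u|^p\,dx$. Since $2\theta-\gamma>0$ in both admissible ranges of $\gamma$, the embedding $\int_\Omega g|u|^p\,dx\le\|g\|_\infty S^{-p/2}\|u\|^p_X$ together with $p>2\theta$ produces a uniform lower bound $\|u\|_X\ge c_0>0$ on $\mathcal{N}^-_\lambda$, and hence $\int_\Omega g|u|^p\,dx\ge c_1>0$ there. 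Passing to the limit with the strong $L^p$ convergence gives $\int_\Omega g|u_\lambda|^p\,dx\ge c_1>0$, so $u_\lambda\ne 0$ and $u_\lambda\in\mathcal{C}^+$. Because $\lambda<\lambda^*_{a,b}=\inf_{X\cap\mathcal{C}^+}\lambda_{a,b}\le\lambda_{a,b}(u_\lambda)$, Proposition \ref{p,2.2}(ii)(a) yields a unique $t^-_\lambda(u_\lambda)>0$ with $t^-_\lambda(u_\lambda)u_\lambda\in\mathcal{N}^-_\lambda$, so $\hat{\mathcal{J}}^-_\lambda\le\mathcal{J}^-_\lambda(u_\lambda)=\mathcal{E}_\lambda(t^-_\lambda(u_\lambda)u_\lambda)$.

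The crux is the strong convergence $u_n\to u_\lambda$ in $X$, and this is the step I expect to be the main obstacle. Suppose it fails; then $\|u_\lambda\|_X<\liminf_n\|u_n\|_X$. Writing $s=t^-_\lambda(u_\lambda)$ and using that the norm terms of $\psi'_{\lambda,u}(s)$ carry the positive coefficients $a,b$ while the weight integrals converge, the strict drop of the norm gives $\liminf_n\psi'_{\lambda,u_n}(s)>\psi'_{\lambda,u_\lambda}(s)=0$. By the one-hump structure of $\Phi_u$ from Proposition \ref{p,2.2}, the fibering map $\psi_{\lambda,u_n}$ is increasing exactly on $(t^+_\lambda(u_n),1)$ and decreasing on $(1,\infty)$, so $\psi'_{\lambda,u_n}(s)>0$ for large $n$ forces $s<1$ and $\psi_{\lambda,u_n}(s)<\psi_{\lambda,u_n}(1)=\mathcal{J}^-_\lambda(u_n)$. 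Combining the (here strict) weak lower semicontinuity of the norm with the convergence of the weight integrals then yields $\mathcal{J}^-_\lambda(u_\lambda)=\mathcal{E}_\lambda(su_\lambda)<\liminf_n\mathcal{E}_\lambda(su_n)=\liminf_n\psi_{\lambda,u_n}(s)\le\hat{\mathcal{J}}^-_\lambda$, contradicting $\mathcal{J}^-_\lambda(u_\lambda)\ge\hat{\mathcal{J}}^-_\lambda$. Hence $u_n\to u_\lambda$ strongly.

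Finally, strong convergence gives $\psi'_{\lambda,u_n}(1)\to\psi'_{\lambda,u_\lambda}(1)=0$ and $\psi''_{\lambda,u_n}(1)\to\psi''_{\lambda,u_\lambda}(1)\le 0$; since $\mathcal{N}^0_\lambda=\emptyset$ for $\lambda<\lambda^*_{a,b}$ the limiting inequality is strict, whence $u_\lambda\in\mathcal{N}^-_\lambda$, i.e. $t^-_\lambda(u_\lambda)=1$. Therefore $\mathcal{J}^-_\lambda(u_\lambda)=\mathcal{E}_\lambda(u_\lambda)=\lim_n\mathcal{E}_\lambda(u_n)=\hat{\mathcal{J}}^-_\lambda$, which is the desired minimizer. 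The delicate point throughout is extracting $s<1$ from the sign of $\psi'_{\lambda,u_n}(s)$, which hinges both on the precise monotonicity intervals of $\psi_{\lambda,u_n}$ and on $\lambda<\lambda^*_{a,b}$ keeping the projections $t^\pm_\lambda$ well separated and $\mathcal{N}^0_\lambda$ empty.
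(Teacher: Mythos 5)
Your proof is correct and follows essentially the route the paper intends: the paper's own proof of Lemma \ref{l3.2} is the single line ``similar to Lemma \ref{l3.1}'', and your argument is the faithful transcription of that scheme (minimizing sequence, boundedness via coercivity, nonzero weak limit, strict weak lower semicontinuity of the norm forcing a fibering-map contradiction, then $\mathcal{N}^0_\lambda=\emptyset$ to place the limit in $\mathcal{N}^-_\lambda$). The one point where ``similar'' genuinely needs modification --- nontriviality of the weak limit --- you handled correctly: Lemma \ref{l3.1} uses $\hat{\mathcal{J}}^{+}_{\lambda}<0$, which is unavailable on $\mathcal{N}^{-}_{\lambda}$ (there the constrained energy is typically nonnegative), and your replacement via the inequality $a(2-\gamma)\|u\|^2_X+b(2\theta-\gamma)\|u\|^{2\theta}_X<(p-\gamma)\int_\Omega g|u|^p\,dx$ on $\mathcal{N}^{-}_{\lambda}$, the Sobolev embedding, and strong $L^p$ convergence is exactly the right fix; it also yields $u_\lambda\in\mathcal{C}^+$ directly, which is what legitimizes the projection $t^{-}_{\lambda}(u_\lambda)$ and the subsequent monotonicity argument giving $s<1$.
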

	\begin{proof}  The proof is similar to Lemma \ref{l3.1}. 
	\end{proof}
	In order to show that the minimizers obtained in Lemma \ref{l3.1} and Lemma \ref{l3.2} are the weak solutions of \eqref{p}, we make use of Theorem 2.3 of \cite{kj} as for $\lambda<\lambda_{a,b}$, the set  $\mathcal{N}^{0}_{\lambda}=\emptyset$. Therefore the above  non-trival minimizers of $\mathcal{E}_{\lambda}$ are critical points of $\mathcal{E}_{\lambda}$ in $X$, equivalently weak solutions of \eqref{p}.
	\section{Existence of solutions when  $\lambda=\lambda^*_{a,b}$}
	In this section, we consider the case when the parameter $\lambda$ takes the extremal value, that is, $\lambda=\lambda^*_{a,b}$. Contrary to the case of $\lambda<\lambda^*_{a,b}$, the set $\mathcal N^0_{\lambda^*_{a,b}}$ is non-empty. Hence the study of $\mathcal N^0_{\lambda^*_{a,b}}$ becomes important. In fact, we also explore the intersection of solution set of $(P_{\lambda^*_{a,b}})$ with $\mathcal N^0_{\lambda^*_{a,b}}$ in this section.
	
	\begin{proposition} The set $\mathcal{N}_{\lambda^*_{a,b}}^0\neq\emptyset$  and $\mathcal{N}_{\lambda^*_{a,b}}^0=\{u\in \mathcal{N}_{\lambda^*_{a,b}}\cap \mathcal C^+:\lambda_{a,b}(u)=\lambda^*_{a,b}\}$ . Moreover, $\mathcal{N}_{\lambda}^0\neq\emptyset$ for all $\lambda\geq \lambda^*_{a,b}$.
	\end{proposition}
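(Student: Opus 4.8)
The plan is to read the entire statement off the fibering-map trichotomy of Proposition~\ref{p,2.2} together with the attainment and unboundedness recorded in Proposition~\ref{luch}. First, for non-emptiness at the extremal value, Proposition~\ref{luch} supplies $w\in\mathcal{C}^+$ with $\lambda_{a,b}(w)=\lambda^*_{a,b}$; since the value $\lambda=\lambda^*_{a,b}$ equals $\lambda_{a,b}(w)$, alternative (ii)(b) of Proposition~\ref{p,2.2} produces a unique $t^0_{\lambda^*_{a,b}}(w)>0$ with $t^0_{\lambda^*_{a,b}}(w)\,w\in\mathcal{N}^0_{\lambda^*_{a,b}}$, whence $\mathcal{N}^0_{\lambda^*_{a,b}}\neq\emptyset$.

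Next I would establish the set identity by proving both inclusions through the uniqueness of critical points of the fibering map. For $\supseteq$, let $u\in\mathcal{N}_{\lambda^*_{a,b}}\cap\mathcal{C}^+$ with $\lambda_{a,b}(u)=\lambda^*_{a,b}$. Then $\psi'_{\lambda^*_{a,b},u}(1)=0$, so $t=1$ is a critical point of $\psi_{\lambda^*_{a,b},u}$; but when $\lambda=\lambda_{a,b}(u)$, alternative (ii)(b) guarantees a \emph{unique} critical point, lying in $\mathcal{N}^0$, so this critical point is $t=1$ and therefore $\psi''_{\lambda^*_{a,b},u}(1)=0$, i.e.\ $u\in\mathcal{N}^0_{\lambda^*_{a,b}}$. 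For $\subseteq$, let $u\in\mathcal{N}^0_{\lambda^*_{a,b}}$. If $u\in\mathcal{C}^-$, then by (i) the fibering map has its unique critical point in $\mathcal{N}^+$, forcing $\psi''_{\lambda^*_{a,b},u}(1)>0$ and contradicting $u\in\mathcal{N}^0$; hence $u\in\mathcal{C}^+$. Now $t=1$ is a \emph{degenerate} critical point, and comparing against the three alternatives (ii)(a)--(c) --- non-degenerate critical points for $\lambda<\lambda_{a,b}(u)$, none for $\lambda>\lambda_{a,b}(u)$, and exactly one degenerate critical point when $\lambda=\lambda_{a,b}(u)$ --- forces $\lambda^*_{a,b}=\lambda_{a,b}(u)$, giving the identity.

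Finally, for the range statement it suffices, again by (ii)(b), to produce for each $\lambda>\lambda^*_{a,b}$ some $u\in\mathcal{C}^+$ with $\lambda_{a,b}(u)=\lambda$, since then $t^0_\lambda(u)u\in\mathcal{N}^0_\lambda$. By Proposition~\ref{luch} the map $\lambda_{a,b}$ is continuous, attains its infimum $\lambda^*_{a,b}$ at $w$, and is unbounded above, so there is $v\in\mathcal{C}^+$ with $\lambda_{a,b}(v)>\lambda$; connecting $w$ to $v$ by a continuous path inside $\mathcal{C}^+$ and applying the intermediate value theorem to $\lambda_{a,b}$ along it yields the required $u$, and hence $\mathcal{N}^0_\lambda\neq\emptyset$ for every $\lambda\geq\lambda^*_{a,b}$. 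The main obstacle is precisely the construction of this path, i.e.\ the path-connectedness of the relevant part of $\mathcal{C}^+$: the straight segment from $w$ to $v$ may leave $\mathcal{C}^+$ where $\int_\Omega g|u|^p$ vanishes, so I would instead route through functions along which $\int_\Omega g|u|^p$ stays positive while $\int_\Omega g|u|^p/\|u\|^{2\theta}_X\to 0^+$ (using the explicit formula for $\lambda_{a,b}$ to see $\lambda_{a,b}\to\infty$ there). Everything else is a direct reading of the fibering analysis already in place.
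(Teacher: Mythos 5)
Your proposal is correct and follows essentially the same route as the paper: non-emptiness at $\lambda^*_{a,b}$ comes from attainment of the infimum (Proposition \ref{luch}) combined with the fibering trichotomy (Proposition \ref{p,2.2}(ii)(b)), the set identity is read off the same trichotomy, and the case $\lambda\geq\lambda^*_{a,b}$ uses continuity and unboundedness of $\lambda_{a,b}$. If anything you are more careful than the paper, which writes out neither the two inclusions of the set identity nor the intermediate-value/connectedness argument you correctly flag as the real content behind the assertion that every $\lambda\geq\lambda^*_{a,b}$ is a value of $\lambda_{a,b}$.
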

	\begin{proof}
		From Proposition \ref{luch}, the minimization problem in \eqref{lmini} has a solution in $X_0\cap \mathcal C^+$, that is, 
		$\lambda_{a,b}(u)=\lambda^*_{a,b}$ for some $u\in X\cap \mathcal C^+$ . Using Proposition \ref{p,2.2} there exists $t_{\lambda^*_{a,b}}^0=t_{\lambda^*_{a,b}}^0(u)>0$ such that $t_{\lambda^*_{a,b}}^0u\in \mathcal{N}^{0}_{\lambda^*_{a,b}=\lambda_{a,b}(u)}$ and hence it's nonempty.
		Since $\lambda_{a,b}(u)$ is unbounded above, therefore for all $\lambda \geq \lambda^*_{a,b}$ there exists $u\in \mathcal{C}^+$ such that $\lambda=\lambda_{a,b}(u)$ and by definition of $\lambda_{a,b}(u)$,  $t(u)u\in \mathcal{N}^0_{\lambda_{a,b}(u)=\lambda}$, hence $\mathcal{N}^{0}_{\lambda_{a,b}} \neq \emptyset$ for all $\lambda\geq \lambda^*_{a,b}$. 
	\end{proof}
	Next we are going to prove an important Lemma which will help us to conclude that the minimizers of the energy functional are not in $\mathcal{N}^{0}_{\lambda^*_{a,b}}$, in spite of $\mathcal{N}^{0}_{\lambda^*_{a,b}}$ being non-empty. 
	\begin{proposition}\label{cor,2.7} For any $a>0$ and $b>0$,	the problem $(P_{\lambda^*_{a,b}})$ has no solution in $ \mathcal{N}^{0}_{\lambda^*_{a,b}}$. 
	\end{proposition}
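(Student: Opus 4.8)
The plan is to argue by contradiction. Suppose $u_*\in\mathcal N^0_{\lambda^*_{a,b}}$ is a weak solution of $(P_{\lambda^*_{a,b}})$. By the characterization $\mathcal N^0_{\lambda^*_{a,b}}=\{u\in\mathcal N_{\lambda^*_{a,b}}\cap\mathcal C^+:\lambda_{a,b}(u)=\lambda^*_{a,b}\}$ established just above, such a $u_*$ lies in $\mathcal C^+$ and realizes the infimum in \eqref{lmini}. Since $\mathcal C^+$ is open and $\lambda_{a,b}$ is $0$-homogeneous and $C^1$ (the fibering maximizer $t_{a,b}(\cdot)$ depends $C^1$ on its argument by the implicit function theorem, as in Lemma \ref{p:2.10}, because $\Phi''_{u_*}(t_{a,b}(u_*))<0$ by the fibering analysis of Proposition \ref{p,2.2}), the function $u_*$ is an interior global minimizer of $\lambda_{a,b}$, whence $\lambda'_{a,b}(u_*)=0$ in $X^*$. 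This extra equation, beyond the Euler--Lagrange equation, is what I will play off against the PDE.

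First I would extract two functional identities, writing $\langle\cdot,\cdot\rangle$ for the inner product of $X$ (so that $\|u\|^2_X=\langle u,u\rangle$). Since $u_*\in\mathcal N^0_{\lambda^*_{a,b}}$ forces $t_{a,b}(u_*)=1$ (because $\psi''_{\lambda^*_{a,b},u_*}(1)=0$ is equivalent to $\Phi'_{u_*}(1)=0$, the maximum of the fibering function $\Phi_{u_*}$ of Proposition \ref{p,2.2}), the envelope theorem applied to $\lambda_{a,b}(u)=\Phi_u(t_{a,b}(u))/\int_\Omega f|u|^\gamma dx$ annihilates the $t$-variation at the maximizer and, with $c_2:=2a+2\theta b\|u_*\|^{2\theta-2}_X$, yields
\[
c_2\langle u_*,\phi\rangle=\lambda^*_{a,b}\gamma\int_\Omega f|u_*|^{\gamma-2}u_*\phi\,dx+p\int_\Omega g|u_*|^{p-2}u_*\phi\,dx\qquad\forall\,\phi\in X.
\]
On the other hand, the weak-solution identity of Definition \ref{def} gives, with $c_1:=a+b\|u_*\|^{2\theta-2}_X$,
\[
c_1\langle u_*,\phi\rangle=\lambda^*_{a,b}\int_\Omega f|u_*|^{\gamma-2}u_*\phi\,dx+\int_\Omega g|u_*|^{p-2}u_*\phi\,dx\qquad\forall\,\phi\in X.
\]

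Next I would read these as a $2\times2$ linear system in the two unknown functionals $\phi\mapsto\int_\Omega f|u_*|^{\gamma-2}u_*\phi\,dx$ and $\phi\mapsto\int_\Omega g|u_*|^{p-2}u_*\phi\,dx$, whose determinant equals $\lambda^*_{a,b}(p-\gamma)\neq0$ (using $\lambda^*_{a,b}>0$ from Proposition \ref{luch} and $p>\gamma$). Solving expresses both weight-functionals as multiples of $\phi\mapsto\langle u_*,\phi\rangle$, with coefficients $\frac{(p-2)a+(p-2\theta)b\|u_*\|^{2\theta-2}_X}{\lambda^*_{a,b}(p-\gamma)}$ and $\frac{(2-\gamma)a+(2\theta-\gamma)b\|u_*\|^{2\theta-2}_X}{p-\gamma}$ respectively, both strictly positive because $1<\gamma<2<2\theta<p$ and $a,b>0$. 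Eliminating $\langle u_*,\cdot\rangle$ shows the two weight-functionals are proportional with a positive constant $c>0$; testing against arbitrary $\phi\in C_0^\infty(\Omega)$ gives the pointwise identity $g|u_*|^{p-2}u_*=c\,f|u_*|^{\gamma-2}u_*$ a.e.\ in $\Omega$, that is, $g\,|u_*|^{p-\gamma}=c\,f$ on the set $\{u_*\neq0\}$.

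Finally I would reach the contradiction from the weight hypotheses. Since $c>0$ and $f>f_0>0$ near $\partial\Omega$ by (F), the identity forces $g>0$ at every point near $\partial\Omega$ where $u_*\neq0$; but by hypothesis $g<0$ near $\partial\Omega$, so $u_*$ would have to vanish on a neighbourhood of $\partial\Omega$, which is impossible for a nontrivial solution (the solutions under consideration being positive in $\Omega$ by the strong maximum principle). The main obstacle is the middle step: rigorously justifying that $\lambda_{a,b}$ is differentiable at $u_*$ and that the envelope computation is legitimate, which rests on the strict nondegeneracy $\Phi''_{u_*}(1)<0$ supplied by Proposition \ref{p,2.2}; once the two identities are in hand, the elimination and the boundary sign contradiction are routine.
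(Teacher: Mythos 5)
Your proof is correct, and it shares the paper's overall skeleton: extract a \emph{second} functional identity from the fact that every $u_*\in\mathcal{N}^{0}_{\lambda^*_{a,b}}$ is a global minimizer of the $0$-homogeneous quotient $\lambda_{a,b}$ on the open cone $\mathcal{C}^+$, play it against the weak form of $(P_{\lambda^*_{a,b}})$, and contradict the boundary behaviour of the weights. Indeed, your envelope computation and the paper's ``differentiate the Nehari identity in $u$'' produce the same identity with coefficients $2a+2\theta b\|u_*\|^{2\theta-2}_X$, $\lambda^*_{a,b}\gamma$ and $p$; note that the paper silently drops the term containing $\lambda_{a,b}'(u)$, which is legitimate only because of the minimality of $u_*$ that you state explicitly (together with the $C^1$-dependence of $t_{a,b}(\cdot)$ guaranteed by $\Phi''_{u_*}(t_{a,b}(u_*))<0$), so your justification of this step is actually more complete. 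Where you genuinely diverge is the elimination. The paper eliminates $\|u_*\|^{2\theta-2}_X$, reads the resulting relation pointwise to conclude $-(-\Delta)^s u_*\geq 0$ near $\partial\Omega$ (this is where $g<0$ near $\partial\Omega$ enters), and then, using $\theta>1$ and the equation once more, derives the inequality $\frac{p-2}{2-\gamma}\,g\,u_*^{p-\gamma}\geq \lambda^*_{a,b}f>0$ near the boundary, contradicted via $u_*(x)\to 0$ as $x\to\partial\Omega$ and (F). You instead solve the two identities as a $2\times 2$ linear system in the functionals $\phi\mapsto\int_\Omega f|u_*|^{\gamma-2}u_*\phi\,dx$ and $\phi\mapsto\int_\Omega g|u_*|^{p-2}u_*\phi\,dx$ (determinant a nonzero multiple of $\lambda^*_{a,b}(p-\gamma)$), obtaining the exact pointwise identity $g|u_*|^{p-\gamma}=c f$, $c>0$, on $\{u_*\neq 0\}$. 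This buys two things: it avoids any pointwise sign analysis of the nonlocal operator, replacing the paper's inequality chain by an equality; and it reproduces for $a>0$ precisely the structure the paper only exhibits in the degenerate case $a=0$ (equation \eqref{eq4.4} in the Remark), so both cases are handled uniformly. Two caveats, neither of which is a gap relative to the paper: the positivity of your two coefficients uses $1<\gamma<2$, i.e.\ the standing assumption of the nondegenerate regime $a>0$; and your final contradiction leans on positivity of the solution via the maximum principle, where the paper leans on $u_*\to 0$ at $\partial\Omega$ --- both rest on regularity and nonnegativity facts that the paper itself leaves implicit.
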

	\begin{proof}
		In order to give a proof of this Proposition, we first prove that for each $u\in \mathcal{N}^{0}_{\lambda^*_{a,b}}$, we have
		\begin{equation*}
			\begin{aligned}
				& 	-2(a+\theta b \|u\|^{2\theta-2}_{X})(-\Delta)^su-\lambda^*_{a,b} \gamma f |u|^{\gamma-2} u-pg|u|^{p-2}u=0.
			\end{aligned}
		\end{equation*}
		In fact, for $t(u)u\in \mathcal{N}_{\lambda}$, we have
		\[at^2\|u\|^2_{X}+bt^{2\theta}\|u\|^{2\theta}_{X}-{\lambda}t^\gamma\int_\Omega f  |u|^{\gamma} dx - t^{p}\int_\Omega g|u|^p dx=0.\]
		Differentiating above equation with respect to $u$ then for all $w\in X$, we 
		\begin{align*}t'(u)&(
			2at(u)\|u\|^2_{X}+2\theta b (t(u))^{2\theta-1}\|u\|^{2\theta}_{X}-\lambda (u)\gamma(t(u))^{\gamma-1}\int_{ \Omega}f|u|^{\gamma}dx-p(t(u))^{p-1}\int_\Omega g|u|^p dx)\\
			&-2a(t(u))^2 (-\Delta)^suw-2\theta b (t(u))^{2\theta}\|u\|^{2\theta-2}_{X}(-\Delta)^suw\\&-\lambda(u)\gamma(t(u))^{\gamma}\int_{ \Omega}f|u|^{\gamma-2}uwdx-(t(u))^{p}p\int_{ \Omega}g|u|^{p-2}uwdx=0.
		\end{align*}
		As $u\in \mathcal{N}^{0}_{\lambda^*_{a,b}=\lambda(u)}$ implies $t(u)=1$, we get
		\[-2(a+\theta b \|u\|^{2\theta-2}_{X})(-\Delta)^su-\lambda^*_{a,b} \gamma f |u|^{\gamma-2} u-pg|u|^{p-2}u=0 .\]
		Now if $u\in X$ solves the problem $(P_{\lambda^*_{a,b}})$, we have 
		\[-(a+b\|u\|^{2\theta-2}_{X})(-\Delta)^su-\lambda^*_{a,b} f|u|^{\gamma-2}u-g|u|^{p-2}u=0.\] Now solving last two equations by eliminating $\|u\|^{2\theta-2}_{X}$, we get
		\[- 2a(\theta-1)(-\Delta)^su=\lambda^*_{a,b}(2\theta-\gamma)f |u|^{\gamma-2}u+(2\theta-p)g|u|^{p-2}u,\]
		which implies,\[-(-\Delta)^s u=\lambda^*_{a,b}\frac{2\theta-\gamma}{2a(\theta-1)}f|u|^{\gamma-2}u+\frac{2\theta-p}{2a(\theta-1)}g|u|^{p-2}u.\]
		Therefore near $\partial\Omega$, we have $-(-\Delta)^s u\geq 0$ leading to 
		\begin{align*}
			\lambda^*_{a,b}\gamma f|u|^{\gamma-2}u+p g |u|^{p-2}u&=- \left(2a+2b\theta\|u\|^{2\theta-2}_{X}\right)(-\Delta)^s u\\
			&\geq -(2a+2b\|u\|^{2\theta-2}_{X})(-\Delta)^s u\\&=2 \lambda^*_{a,b}f |u|^{\gamma-2}u+2g|u|^{p-2}u
		\end{align*}
		or
		\[\left(\frac{p-2}{2-\gamma}\right)g|u|^{p-\gamma}\geq\lambda^*_{a,b} f>0.\] Now using $u(x)\rightarrow 0$ as $x\rightarrow\partial \Omega$, left hand side of above inequality goes to zero while right hand side is bounded away from zero near boundary in the light of the assumption $(F)$ to give an absurd.
	\end{proof}
	\begin{remark}
		In degenerate Kirchhoff  case, after elimination of $(-\Delta)^s$ term, we get  \begin{equation}\label{eq4.4}
			{ g|u|^{p-\gamma}=\lambda^*\frac{2\theta-\gamma}{p-2\theta} f .}
		\end{equation}	Now, using $(F)$ and choosing  $x$ very close to boundary of $\Omega$, one can observe  that left hand side of \eqref{eq4.4} goes to 0, while the right hand side of \eqref{eq4.4} is finite and bounded away from zero, which gives us an absurd.
	\end{remark}
	As a consequence of above Proposition, we have the following corollary.
	\begin{corollary}
		The set $\mathcal{N}_{\lambda^*_{a,b}}^0$ is compact.
	\end{corollary}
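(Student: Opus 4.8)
The plan is to establish sequential compactness of $\mathcal{N}^{0}_{\lambda^*_{a,b}}$ in $X$: given an arbitrary sequence $\{u_n\}\subset \mathcal{N}^{0}_{\lambda^*_{a,b}}$, I would extract a subsequence converging \emph{strongly} to some $u\in \mathcal{N}^{0}_{\lambda^*_{a,b}}$. The first step is a pair of uniform two-sided norm bounds $0<c_1\le \|u_n\|_X\le c_2<\infty$. For this I exploit the two identities characterizing membership in $\mathcal{N}^{0}_{\lambda^*_{a,b}}$, namely $m_u(1)=0$, i.e. $a(2-\gamma)\|u\|^2_X+b(2\theta-\gamma)\|u\|^{2\theta}_X=(p-\gamma)\int_\Omega g|u|^p dx$, together with $\lambda_{a,b}(u)=\lambda^*_{a,b}$. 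Inserting the Sobolev embeddings $X\hookrightarrow L^p(\Omega), L^\gamma(\Omega)$ and the bounds $f,g\in L^\infty(\Omega)$ into these identities gives the lower bound from $m_u(1)=0$ (because $p>2$ in the non-degenerate case and $p>2\theta$ in the degenerate case, the $\|u\|^p$ term must dominate) and the upper bound from $\lambda_{a,b}(u)=\lambda^*_{a,b}$ (because $\gamma<2$, resp. $\gamma<2\theta$, the subcritical power $\|u\|^\gamma$ controls the leading norm term). Boundedness then yields, along a subsequence, $u_n\rightharpoonup u$ in $X$, $u_n\to u$ in $L^q(\Omega)$ for $q\in[1,2^*_s)$, and hence $\int_\Omega f|u_n|^\gamma\to\int_\Omega f|u|^\gamma$ and $\int_\Omega g|u_n|^p\to\int_\Omega g|u|^p$.

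Next I would check that the weak limit is admissible and nontrivial. Passing to the limit in $m_{u_n}(1)=0$ and using $\|u_n\|_X\ge c_1$ forces $\int_\Omega g|u|^p dx\ge \tfrac{b(2\theta-\gamma)}{p-\gamma}c_1^{2\theta}>0$, so $u\not\equiv 0$ and, in fact, $u\in\mathcal{C}^+$. Consequently $\lambda_{a,b}(u)$ is well defined, and by the variational definition \eqref{lmini} of the extremal value one has $\lambda_{a,b}(u)\ge \lambda^*_{a,b}$.

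\textbf{The main obstacle is upgrading weak convergence to strong convergence}, since $X$ is infinite dimensional and the energy terms are only weakly lower semicontinuous. Here the plan is to use the weak lower semicontinuity of $\lambda_{a,b}$ from Remark \ref{lwsc} as a squeeze. Suppose, for contradiction, that $u_n\not\to u$ in $X$; after relabeling, $\|u\|_X<\lim_n\|u_n\|_X$. The argument in Remark \ref{lwsc} then delivers the \emph{strict} inequality $\lambda_{a,b}(u)<\lambda_{a,b}(u_n)$ for all large $n$. But $u_n\in\mathcal{N}^{0}_{\lambda^*_{a,b}}$ means $\lambda_{a,b}(u_n)=\lambda^*_{a,b}$, whence $\lambda_{a,b}(u)<\lambda^*_{a,b}$, contradicting $\lambda_{a,b}(u)\ge\lambda^*_{a,b}$ from the previous step. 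Therefore $\|u_n\|_X\to\|u\|_X$, and since $X$ is a Hilbert space, $u_n\to u$ strongly. Finally, strong convergence gives $\lambda_{a,b}(u)=\lim_n\lambda_{a,b}(u_n)=\lambda^*_{a,b}$ and $t_{a,b}(u)=\lim_n t_{a,b}(u_n)=1$ via the continuity statements in Proposition \ref{luch} and Remark \ref{lwsc}, so $u\in\mathcal{N}^{0}_{\lambda^*_{a,b}}$ by its characterization. This shows every sequence in $\mathcal{N}^{0}_{\lambda^*_{a,b}}$ has a subsequence converging within $\mathcal{N}^{0}_{\lambda^*_{a,b}}$, i.e. the set is compact. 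The degenerate case $a=0$ runs verbatim, using the explicit formulas for $t(u)$ and $\lambda(u)$.
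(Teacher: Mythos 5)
Your proof is correct, and it is in fact more than the paper provides: the paper states this corollary with no proof at all, asserting only that it is ``a consequence of the above Proposition'' (the non-existence of solutions of $(P_{\lambda^*_{a,b}})$ in $\mathcal{N}^{0}_{\lambda^*_{a,b}}$). That proposition alone does not yield compactness, so the real content is exactly what you supply: the two-sided norm bounds extracted from the system \eqref{one} via the Sobolev embedding, and the weak-to-strong convergence upgrade obtained by playing the weak lower semicontinuity of $\lambda_{a,b}$ (Remark \ref{lwsc}) against the constraint $\lambda_{a,b}(u_n)=\lambda^*_{a,b}$ and the minimality $\lambda_{a,b}(u)\geq\lambda^*_{a,b}$. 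This is also the argument the authors implicitly rely on elsewhere: in Proposition \ref{prml} they conclude strong subsequential convergence to a point of $\mathcal{N}^{0}_{\lambda^*_{a,b}}$ ``by following the same argument as in proof of Proposition \ref{luch}'', which is precisely your squeeze. Two minor points, neither affecting correctness. First, your final step can be shortened: once $u_n\to u$ strongly with $\|u\|_X\geq c_1>0$, passing to the limit in $\psi'_{\lambda^*_{a,b},u_n}(1)=0$ and $\psi''_{\lambda^*_{a,b},u_n}(1)=0$ places $u$ in $\mathcal{N}^{0}_{\lambda^*_{a,b}}$ directly, without invoking continuity of $t_{a,b}$ and $\lambda_{a,b}$. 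Second, in the contradiction step one should, strictly speaking, work along subsequences: pick a subsequence with $\|u_n\|_X\to L>\|u\|_X$, apply the Remark \ref{lwsc} argument to it, and conclude that every subsequence of the original sequence admits a further subsequence converging strongly to $u$; this is routine, but worth making explicit since Remark \ref{lwsc} is itself stated rather loosely in the paper.
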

	In order to find solutions of $(P_{\lambda^*})$ we take a sequence $\lambda_n$ such that $\lambda_{n}\uparrow\lambda^*_{a,b}$. As $(P_{\lambda_{n}})$ has solutions $u_{\lambda_n}\in \mathcal{N}^{-}_{\lambda_{n}}$ and $v_{\lambda_{n}}\in \mathcal{N}^{+}_{\lambda_{n}}$ we expect  strong limits of these solution sequences to give rise to solutions of $(P_{\lambda^*_{a,b}})$.
	Based on the definition in equation \eqref{capnl}, we have the following couple of propositions which are required to study the limiting behaviour in case of $\lambda\uparrow \lambda^*_{a,b}$. 
	
	\begin{proposition}\label{prop 4.2} There holds,
		\[\overline{\mathcal{N}^{+}_{\lambda^*_{a,b}}\cup \mathcal{N}^{-}_{\lambda^*_{a,b}}}=\mathcal{N}^{+}_{\lambda^*_{a,b}}\cup\mathcal{N}^{-}_{\lambda^*_{a,b}}\cup\mathcal{N}^{0}_{\lambda^*_{a,b}}\cup \{0\}.\]
	\end{proposition}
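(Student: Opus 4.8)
The plan is to prove the stated equality by a double inclusion, with all the real content in the reverse inclusion. Throughout, write $N(u)=a\|u\|_X^2+b\|u\|_X^{2\theta}-\lambda^*_{a,b}\int_\Omega f|u|^\gamma\,dx-\int_\Omega g|u|^p\,dx$, so that by Definition \ref{def} one has $u\in\mathcal N_{\lambda^*_{a,b}}$ if and only if $u\neq0$ and $N(u)=0$, whence $\mathcal N^+_{\lambda^*_{a,b}}\cup\mathcal N^-_{\lambda^*_{a,b}}\cup\mathcal N^0_{\lambda^*_{a,b}}=\{u\neq0:N(u)=0\}$. For the inclusion $\subseteq$ I would argue by continuity: if $u_n\in\mathcal N^+_{\lambda^*_{a,b}}\cup\mathcal N^-_{\lambda^*_{a,b}}$ and $u_n\to u$ in $X$, then since the embeddings $X\hookrightarrow L^\gamma(\Omega)$ and $X\hookrightarrow L^p(\Omega)$ are compact (as $\gamma,p<2^*_s$) the maps $u\mapsto\int_\Omega f|u|^\gamma\,dx$ and $u\mapsto\int_\Omega g|u|^p\,dx$ are continuous for the strong topology, and the norm terms are plainly continuous; hence $N$ is continuous and $N(u)=\lim_n N(u_n)=0$. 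Thus either $u=0$, or $u\in\mathcal N_{\lambda^*_{a,b}}=\mathcal N^+_{\lambda^*_{a,b}}\cup\mathcal N^-_{\lambda^*_{a,b}}\cup\mathcal N^0_{\lambda^*_{a,b}}$, and in both cases $u$ lies in the right-hand side.

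For the reverse inclusion, the points of $\mathcal N^+_{\lambda^*_{a,b}}\cup\mathcal N^-_{\lambda^*_{a,b}}$ are in the closure trivially, so it remains to place $0$ and the whole of $\mathcal N^0_{\lambda^*_{a,b}}$ there. To see $0\in\overline{\mathcal N^+_{\lambda^*_{a,b}}}$, I would exhibit unit directions along which the $\mathcal N^+$-projection shrinks to zero. Choosing functions $v_n$ that concentrate at a point $x_0\in\Omega$ with $g(x_0)<0$ (so $v_n\in\mathcal C^-$ for large $n$ and $t^+_{\lambda^*_{a,b}}(v_n)$ is well defined by Proposition \ref{p,2.2}(i)) and normalising $\|v_n\|_X=1$, the subcriticality $\gamma<2^*_s$ forces $\int_\Omega f|v_n|^\gamma\,dx\le\|f\|_\infty\|v_n\|_{L^\gamma}^\gamma\to0$. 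Since for $v\in\mathcal C^-$ the projection $t^+_{\lambda^*_{a,b}}(v)$ solves $\Phi_{v}(t)=\lambda^*_{a,b}\int_\Omega f|v|^\gamma\,dx$ with $\Phi_{v}(t)\to0$ as $t\to0^+$ (the relevant powers $2-\gamma,\,2\theta-\gamma$ being positive in both the non-degenerate and degenerate regimes), this yields $t^+_{\lambda^*_{a,b}}(v_n)\to0$, hence $t^+_{\lambda^*_{a,b}}(v_n)v_n\to0$ in $X$ with $t^+_{\lambda^*_{a,b}}(v_n)v_n\in\mathcal N^+_{\lambda^*_{a,b}}$.

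The main obstacle is $\mathcal N^0_{\lambda^*_{a,b}}\subseteq\overline{\mathcal N^+_{\lambda^*_{a,b}}\cup\mathcal N^-_{\lambda^*_{a,b}}}$. Fix $u\in\mathcal N^0_{\lambda^*_{a,b}}$; by the earlier Proposition $u\in\mathcal C^+$, $\lambda_{a,b}(u)=\lambda^*_{a,b}$, and since $u$ itself lies on $\mathcal N^0$ its projection is $t^0_{\lambda^*_{a,b}}(u)=1$. The idea is to perturb the direction of $u$ so as to raise $\lambda_{a,b}$ strictly above its minimum, thereby splitting the degenerate projection into two. I would first establish that there exist $w_n\to u$ in $X$ with $w_n\in\mathcal C^+$ and $\lambda_{a,b}(w_n)>\lambda^*_{a,b}$: the minimising cone $K=\{w\in\mathcal C^+:\lambda_{a,b}(w)=\lambda^*_{a,b}\}$ has empty interior, because its intersection with the unit sphere is carried homeomorphically by the radial projection $w\mapsto t^0_{\lambda^*_{a,b}}(w)w$ (with continuous inverse $v\mapsto v/\|v\|_X$) onto the compact set $\mathcal N^0_{\lambda^*_{a,b}}$ (compactness is the Corollary above), and a compact set has empty interior in the unit sphere of an infinite-dimensional space; since $\mathcal C^+$ is open and $\lambda_{a,b}\geq\lambda^*_{a,b}$ there, every neighbourhood of $u$ contains directions with $\lambda_{a,b}>\lambda^*_{a,b}$. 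For such $w_n$, Proposition \ref{p,2.2}(ii)(a) applied at the fixed parameter $\lambda^*_{a,b}<\lambda_{a,b}(w_n)$ gives $t^+_{\lambda^*_{a,b}}(w_n)<t^-_{\lambda^*_{a,b}}(w_n)$ with $t^\pm_{\lambda^*_{a,b}}(w_n)w_n\in\mathcal N^\pm_{\lambda^*_{a,b}}$. Finally, because $\lambda_{a,b}$ is continuous (Proposition \ref{luch}) one has $\lambda_{a,b}(w_n)\downarrow\lambda^*_{a,b}$, so the horizontal level $\lambda^*_{a,b}\int_\Omega f|w_n|^\gamma\,dx$ approaches the maximum of $\Phi_{w_n}$; the delicate step, which I would make quantitative, is the coalescence $t^\pm_{\lambda^*_{a,b}}(w_n)\to t^0_{\lambda^*_{a,b}}(u)=1$, obtained from the uniform convergence $\Phi_{w_n}\to\Phi_u$ on compact $t$-intervals together with the nondegeneracy $\Phi_u''(t_*)=t_*^{1-\gamma}\mathcal H_u'(t_*)<0$ of the maximiser, which forces $t^\pm_{\lambda^*_{a,b}}(w_n)-1=O\big((\lambda_{a,b}(w_n)-\lambda^*_{a,b})^{1/2}\big)$. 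Consequently $t^\pm_{\lambda^*_{a,b}}(w_n)w_n\to u$, exhibiting $u$ as a limit of points of $\mathcal N^+_{\lambda^*_{a,b}}\cup\mathcal N^-_{\lambda^*_{a,b}}$ and completing the proof.
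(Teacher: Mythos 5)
Your proposal is correct in substance and does considerably more than the paper itself. The paper's entire proof is one sentence: take $v_n\in\mathcal{N}^{-}_{\lambda^*_{a,b}}$ converging strongly to $v$ and use compact Sobolev embeddings --- that is, only the inclusion $\subseteq$, which is exactly your first paragraph (strong continuity of the Nehari functional $N$, so limits land in $\mathcal{N}_{\lambda^*_{a,b}}\cup\{0\}$). The reverse inclusion, which is where the actual content of the stated \emph{equality} lies (one must exhibit $0$ and every point of $\mathcal{N}^{0}_{\lambda^*_{a,b}}$ as strong limits of points of $\mathcal{N}^{+}_{\lambda^*_{a,b}}\cup\mathcal{N}^{-}_{\lambda^*_{a,b}}$), is not addressed by the paper at all. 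Your treatment of it is sound and is close in spirit to the arguments of \cite{Silva}, from which the paper imports Proposition \ref{p5.1}: for $u\in\mathcal{N}^{0}_{\lambda^*_{a,b}}$ you use the compactness of $\mathcal{N}^{0}_{\lambda^*_{a,b}}$ (the Corollary, which precedes this proposition, so no circularity) together with the fact that compact subsets of the unit sphere have empty relative interior in infinite dimensions, to produce $w_n\to u$ in $\mathcal{C}^+$ with $\lambda_{a,b}(w_n)>\lambda^*_{a,b}$; then Proposition \ref{p,2.2}(ii)(a) at the fixed parameter $\lambda^*_{a,b}$ gives the two projections, and they coalesce at $t^0_{\lambda^*_{a,b}}(u)=1$ because $\mathcal{H}_u'(t_*)<0$ (which indeed holds, $t_*$ lying strictly to the right of the unique maximum point of $\mathcal{H}_u$). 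The quantitative rate $O\bigl((\lambda_{a,b}(w_n)-\lambda^*_{a,b})^{1/2}\bigr)$ is not needed; the qualitative coalescence already finishes the argument.

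One step, however, fails under the paper's standing hypotheses: the choice of $x_0\in\Omega$ with $g(x_0)<0$. Assumption (G) only requires $g^+\not\equiv 0$; the sign condition $g<0$ near $\partial\Omega$ belongs to Theorem \ref{T1} alone, and in the degenerate setting of Theorem \ref{T2} the weight $g$ may be nonnegative on all of $\Omega$, in which case $\mathcal{C}^-$ contains no usable directions and your construction of $0$ as a limit of $\mathcal{N}^{+}_{\lambda^*_{a,b}}$-points collapses. The repair is straightforward: concentrate at an arbitrary point of $\Omega$, normalising $\|v_n\|_X=1$, so that $\int_\Omega f|v_n|^\gamma dx\to 0$ by compactness of the embedding. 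If $v_n\in\mathcal{C}^+$, one checks from the expression for $\lambda_{a,b}$ that $\lambda_{a,b}(v_n)\to\infty$, so $t^{+}_{\lambda^*_{a,b}}(v_n)$ is eventually defined; in either case $t^{+}_{\lambda^*_{a,b}}(v_n)$ lies on the increasing branch of $\Phi_{v_n}$, where $\Phi_{v_n}(t)\geq \frac{a}{2}t^{2-\gamma}$ (resp.\ $\frac{b}{2}t^{2\theta-\gamma}$ when $a=0$) for $t\leq t_0$ with $t_0$ independent of $n$, since $\int_\Omega g|v_n|^p dx$ is bounded by the Sobolev embedding. Then $\Phi_{v_n}\bigl(t^{+}_{\lambda^*_{a,b}}(v_n)\bigr)=\lambda^*_{a,b}\int_\Omega f|v_n|^\gamma dx\to 0$ forces $t^{+}_{\lambda^*_{a,b}}(v_n)\to 0$, hence $t^{+}_{\lambda^*_{a,b}}(v_n)v_n\to 0$, as desired, with no sign hypothesis on $g$.
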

	\begin{proof}  The proof of this proposition follows by taking a sequence
		$v_n \in \mathcal{N}^{-}_{\lambda^*_{a,b}}$ converging strongly to $v$ in $X$ and using compact sobolev embeddings. 
	\end{proof}
	\begin{remark}  Remark \ref{remc} together with Proposition \ref{prop 4.2} leads to the following observation
		\[\overline{\mathcal{\hat{N}}_{\lambda^*_{a,b}}\cup \hat {\mathcal{N}}^{+}_{\lambda^*_{a,b}}}=\mathcal{\hat{N}}_{\lambda^*_{a,b}}\cup \hat {\mathcal{N}}^{+}_{\lambda^*_{a,b}}\cup \{tu:t>0, t\in \mathcal{N}^{0}_{\lambda^*_{a,b}}\}\cup \{0\}.\]
	\end{remark}

	Now we have necessary mathematical background to show the existence of at least two positive solutions of the problem $(P_{\lambda^*_{a,b}})$ taking advantage of multiplicity of solutions \eqref{p} when $\lambda<\lambda^*_{a,b}$ via limiting behaviour.

	Define the map ${t_{\lambda^*_{a,b}}}:\overline{\hat{\mathcal{N}}_{\lambda^*_{a,b}}}\setminus\{0\}\rightarrow\mathbb{R}$ as
	
	\[t_{\lambda^*_{a,b}}(u)=
	\begin{cases} t^{-}_{\lambda^*_{a,b}}(u)& u\in \hat{\mathcal{N}}_{\lambda^*_{a,b}} \\ t^{0}_{\lambda^*_{a,b}}(u) & \textrm{otherwise}
	\end{cases}\] 
	
	and $s_{\lambda^*_{a,b}}:\overline{\hat{\mathcal{N}}_{\lambda^*_{a,b}}\cup\hat{\mathcal{N}}^{+}_{\lambda^*_{a,b}}}\rightarrow\mathbb{R}$ by
	\[s_{\lambda^*_{a,b}}(v)=
	\begin{cases} t^{+}_{\lambda^*_{a,b}}(v)& v\in \hat{\mathcal{N}}_{\lambda^*_{a,b}}\cup\hat{\mathcal{N}}^{+}_{\lambda^*_{a,b}} \\ t^{0}_{\lambda^*_{a,b}}(v) & \textrm{otherwise}.
	\end{cases}.\]
	The following result can be adopted from \cite{Silva}.
	
	\begin{proposition}\label{p5.1} The following conclusions hold:
		
		\begin{enumerate}
			\item [(i)]
			The functions $s_{\lambda^*_{a,b}}$ and $t_{\lambda^*_{a,b}}$ are continuous. Moreover,
			once $u\notin \hat{\mathcal{N}}^{+}_{\lambda^*_{a,b}}$, we have 
			\[\lim_{\lambda \uparrow \lambda^*_{a,b}}t^{-}_{\lambda}(u)=t_{\lambda^*_{a,b}}(u), \ \  \lim_{\lambda \uparrow \lambda^*_{a,b}}t^{+}_{\lambda}(u)=s_{\lambda^*_{a,b}}(u) \]
			and
			\[\lim_{\lambda \uparrow \lambda^*_{a,b}} \mathcal E_{\lambda}(t^{-}_\lambda(u)u)=\mathcal E_{{\lambda}^{*}_{a,b}}(t_{\lambda^*_{a,b}}(u)u), \ \   \lim_{\lambda \uparrow \lambda^*_{a,b}} \mathcal E_{\lambda}(t^{+}_\lambda(u)u)=\mathcal E_{{\lambda}^{*}_{a,b}}(s_{\lambda^*_{a,b}}(u)u).\]
			
			\item[(ii)] The set $\mathcal{N}^{0}_{\lambda^*_{a,b}}$ has empty interior in $\mathcal{N}_{\lambda^*_{a,b}}$.
		\end{enumerate}
		
	\end{proposition}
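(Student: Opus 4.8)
The plan is to run the whole argument through the scalar reformulation of Proposition \ref{p,2.2}. For $u\in\mathcal C^+$ the projections $t^{\pm}_\lambda(u)$ are precisely the two roots of $\Phi_u(t)=\lambda\int_\Omega f|u|^\gamma\,dx$ lying on either side of the unique maximiser $t_*(u)$ of $\Phi_u$, and these two roots coalesce at $t_*(u)$ exactly when $\lambda=\lambda_{a,b}(u)$, the common value then being $t^0_{\lambda_{a,b}(u)}(u)$. By the remark following Proposition \ref{prop 4.2}, the second (\emph{otherwise}) branch in the definitions of $t_{\lambda^*_{a,b}}$ and $s_{\lambda^*_{a,b}}$ is supported exactly on the cone over $\mathcal N^0_{\lambda^*_{a,b}}$, which forms the boundary of $\hat{\mathcal N}_{\lambda^*_{a,b}}$; the first branch corresponds to the open region $\lambda^*_{a,b}<\lambda_{a,b}(u)$ (and, for $s_{\lambda^*_{a,b}}$, also to $\hat{\mathcal N}^+_{\lambda^*_{a,b}}$). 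Thus the proof of \textbf{(i)} splits into the behaviour on these open regions and the behaviour across the coalescence boundary.

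On the open regions I would invoke the implicit function theorem. There $\psi''_{\lambda^*_{a,b},u}\bigl(t^{\pm}_{\lambda^*_{a,b}}(u)\bigr)\neq 0$, equivalently $\Phi'_u\neq 0$ at the relevant root, so applying the implicit function theorem to $\psi'_{\lambda,u}(t)=0$ shows that $(\lambda,u)\mapsto t^{\pm}_\lambda(u)$ is $C^1$; this gives continuity of $s_{\lambda^*_{a,b}}$ and $t_{\lambda^*_{a,b}}$ on $\hat{\mathcal N}_{\lambda^*_{a,b}}$ and $\hat{\mathcal N}^+_{\lambda^*_{a,b}}$, and, combined with Lemma \ref{p:2.10}, the limits $t^{\pm}_\lambda(u)\to t^{\pm}_{\lambda^*_{a,b}}(u)$ as $\lambda\uparrow\lambda^*_{a,b}$ whenever $\lambda^*_{a,b}<\lambda_{a,b}(u)$. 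The energy limits then follow at once from the joint continuity of $(\lambda,t)\mapsto\psi_{\lambda,u}(t)=\mathcal E_\lambda(tu)$.

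The delicate case of \textbf{(i)} is a boundary point $u$ with $\lambda_{a,b}(u)=\lambda^*_{a,b}$, where the implicit function theorem degenerates because the two roots merge. For the limits along $\lambda\uparrow\lambda^*_{a,b}$ I would use the monotonicity in Lemma \ref{p:2.10}: $\lambda\mapsto t^+_\lambda(u)$ is increasing and $\lambda\mapsto t^-_\lambda(u)$ is decreasing on $(0,\lambda^*_{a,b})=(0,\lambda_{a,b}(u))$, so both are monotone and mutually squeezed, hence convergent; passing to the limit in $\psi'_{\lambda,u}=0$ and using uniqueness of the critical point $t_*(u)$ of $\Phi_u$ forces the common limit to be $t^0_{\lambda^*_{a,b}}(u)=t_{\lambda^*_{a,b}}(u)=s_{\lambda^*_{a,b}}(u)$. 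For continuity of $t_{\lambda^*_{a,b}}$ and $s_{\lambda^*_{a,b}}$ across this boundary I would take $u_n\to u$ and use Proposition \ref{luch}: since $\lambda_{a,b}$ is continuous, $\lambda_{a,b}(u_n)\to\lambda^*_{a,b}$, and because the maximum of $\Phi_{u_n}$ at $t_*(u_n)$ is non-degenerate, the bracket containing $t^{\pm}_{\lambda^*_{a,b}}(u_n)$ shrinks to $0$ around $t_*(u_n)\to t_*(u)$; hence both $t^{-}_{\lambda^*_{a,b}}(u_n)$ and $t^{+}_{\lambda^*_{a,b}}(u_n)$ converge to $t^0_{\lambda^*_{a,b}}(u)$, matching the value of the limiting map.

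Finally, \textbf{(ii)} follows immediately from Proposition \ref{prop 4.2}, which gives $\mathcal N^0_{\lambda^*_{a,b}}\subseteq\overline{\mathcal N^+_{\lambda^*_{a,b}}\cup\mathcal N^-_{\lambda^*_{a,b}}}$. Thus every $u\in\mathcal N^0_{\lambda^*_{a,b}}$ is a strong limit of points of $\mathcal N^+_{\lambda^*_{a,b}}\cup\mathcal N^-_{\lambda^*_{a,b}}\subseteq\mathcal N_{\lambda^*_{a,b}}\setminus\mathcal N^0_{\lambda^*_{a,b}}$, so no relative neighbourhood of $u$ in $\mathcal N_{\lambda^*_{a,b}}$ can be contained in $\mathcal N^0_{\lambda^*_{a,b}}$, which is precisely the empty-interior statement. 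I expect the only genuine obstacle to be the boundary continuity in \textbf{(i)}: because the implicit function theorem fails exactly on the cone over $\mathcal N^0_{\lambda^*_{a,b}}$, continuity there cannot be read off directly and must be recovered from the monotonicity of Lemma \ref{p:2.10} together with the continuity and compactness supplied by Proposition \ref{luch}; the remaining routine point is to check that the one-sided limits produced by the squeeze indeed agree with the prescribed values of $t_{\lambda^*_{a,b}}$ and $s_{\lambda^*_{a,b}}$.
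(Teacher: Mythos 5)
Your argument is essentially correct, but it takes a genuinely different route from the paper's treatment, because the paper does not prove Proposition \ref{p5.1} at all: it simply remarks that the result ``can be adopted from'' Silva and Macedo \cite{Silva}, where the analogous statement is established for a local concave--convex problem. You instead give a self-contained proof through the scalar fibering reduction of Proposition \ref{p,2.2}: the implicit function theorem applies wherever $\psi''_{\lambda,u}(t^{\pm}_{\lambda}(u))\neq0$, i.e.\ exactly off the cone over $\mathcal{N}^{0}_{\lambda^*_{a,b}}$; at a coalescence point ($\lambda_{a,b}(u)=\lambda^*_{a,b}$) the monotonicity of Lemma \ref{p:2.10} gives convergence of $t^{\pm}_{\lambda}(u)$ as $\lambda\uparrow\lambda^*_{a,b}$, and --- as you rightly stress, since monotone convergence alone would allow two distinct limits --- it is the uniqueness of the critical point of $\psi_{\lambda^*_{a,b},u}$ (tangency of the level $\lambda^*_{a,b}\int_\Omega f|u|^{\gamma}dx$ at the maximum of $\Phi_u$) that forces both limits to equal $t^{0}_{\lambda^*_{a,b}}(u)$; and item (ii) follows from Proposition \ref{prop 4.2} together with the disjointness of the Nehari decomposition. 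The citation buys the authors brevity; your route buys an actual verification that the Silva--Macedo arguments survive the passage to the fractional Kirchhoff setting with the extra term $\frac{b}{2\theta}t^{2\theta}\|u\|^{2\theta}_{X}$ in the fibering maps, which the paper leaves entirely to the reader. One step you should still tighten: in the continuity argument at the boundary, replace ``the bracket shrinks to $0$'' by a compactness/uniqueness argument --- the roots $t^{\pm}_{\lambda^*_{a,b}}(u_n)$ stay in a compact subset of $(0,\infty)$ because $\int_\Omega g|u_n|^{p}dx\to\int_\Omega g|u|^{p}dx>0$ and $\Phi_{u_n}\to\Phi_u$ locally uniformly, so every limit point of these roots solves $\Phi_u(t)=\lambda^*_{a,b}\int_\Omega f|u|^{\gamma}dx$, whose unique solution is $t^{0}_{\lambda^*_{a,b}}(u)$ when $\lambda_{a,b}(u)=\lambda^*_{a,b}$; this formulation also covers approximating sequences $u_n$ lying in the cone over $\mathcal{N}^{0}_{\lambda^*_{a,b}}$ itself, a case your bracket picture does not address.
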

	
	Now we are ready to discuss following minimization problems
	\[\hat{\mathcal E}^{-}_{\lambda^*_{a,b}}=\inf\{\mathcal E_{\lambda^*_{a,b}}(t_{\lambda^*_{a,b}}(u)u):u\in \mathcal{N}^{-}_{\lambda^*_{a,b}} \cup \mathcal{N}^{0}_{\lambda^*_{a,b}} \}, \]
	{and}\[
	\hat{\mathcal E}^{+}_{\lambda^*_{a,b}}=\inf\{\mathcal E_{\lambda^*_{a,b}}(s_{\lambda^*_{a,b}}(v)v):v\in \mathcal{N}^{+}_{\lambda^*_{a,b}} \cup \mathcal{N}^{0}_{\lambda^*_{a,b}} \}.\]

	From Proposition \ref{p5.1} it follows that $\hat{\mathcal{J}}^{\pm}_{\lambda^*_{a,b}}=\hat{\mathcal E}^{\pm}_{\lambda^*_{a,b}}$. Next we observe  behaviour of $(0,\lambda^*_{a,b}]\ni\lambda\mapsto \hat{\mathcal{J}}^{\pm}_{\lambda}$    in following proposition when $\lambda\uparrow\lambda^*_{a,b}$.
	
	\begin{proposition}\label{p5.2}
		The function $\lambda \mapsto \hat{\mathcal{J}}^{\pm}_{\lambda}$ is decreasing for all $\lambda\in(0,\lambda^*_{a,b}].$ Moreover,
		\[\lim_{\lambda \uparrow \lambda^*_{a,b}} \hat{\mathcal{J}}^{\pm}_{\lambda}=\hat{\mathcal{J}}^{\pm}_{\lambda^*_{a,b}}.\]
	\end{proposition}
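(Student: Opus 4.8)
The plan is to read both infima as coming from pointwise monotone families and to exploit the defining property of the extremal value. First I would reduce the infima to the cones of \eqref{capnl}. Since $\lambda_{a,b}$ is $0$-homogeneous (Proposition \ref{luch}) and the projections satisfy $t^{\pm}_\lambda(su)=t^{\pm}_\lambda(u)/s$, the maps $\mathcal J^{\pm}_\lambda$ are $0$-homogeneous; every $u\in\hat{\mathcal N}_\lambda$ projects to $t^{\pm}_\lambda(u)u$ with the same energy, while every element of $\mathcal N^{\pm}_\lambda$ already has $t^{\pm}_\lambda=1$. Hence
\[
\hat{\mathcal J}^{-}_\lambda=\inf_{u\in\hat{\mathcal N}_\lambda}\mathcal J^{-}_\lambda(u),\qquad
\hat{\mathcal J}^{+}_\lambda=\inf_{u\in\hat{\mathcal N}_\lambda\cup\hat{\mathcal N}^{+}_\lambda}\mathcal J^{+}_\lambda(u).
\]
The decisive fact, coming straight from the definition \eqref{lmini}, is that $\lambda_{a,b}(u)\ge\lambda^*_{a,b}$ for every $u\in\mathcal C^+$. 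Thus for any $\lambda<\lambda^*_{a,b}$ and any $u\in\mathcal C^+$ one has $\lambda<\lambda^*_{a,b}\le\lambda_{a,b}(u)$, so $u\in\hat{\mathcal N}_\lambda$: the feasible sets only shrink as $\lambda$ grows, yet any fixed competitor stays admissible all the way up to $\lambda^*_{a,b}$.

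For the monotonicity, fix $\lambda_1<\lambda_2\le\lambda^*_{a,b}$ and let $u_1$ be the minimizer of $\hat{\mathcal J}^{-}_{\lambda_1}$ furnished by Lemma \ref{l3.2} (and by Lemma \ref{l3.1} for the $+$ branch), which exists since $\lambda_1<\lambda^*_{a,b}$. As $u_1\in\mathcal C^+$ we have $\lambda_2\le\lambda^*_{a,b}\le\lambda_{a,b}(u_1)$, so its projection at $\lambda_2$ is well defined (in the boundary case $\lambda_{a,b}(u_1)=\lambda_2=\lambda^*_{a,b}$ one replaces $t^{-}_{\lambda^*_{a,b}}$ by the extended map $t_{\lambda^*_{a,b}}$). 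Using $t^{-}_{\lambda_1}(u_1)=1$ and the strict decrease of $\lambda\mapsto\mathcal J^{-}_\lambda(u_1)$ from Lemma \ref{p:2.10}(ii),
\[
\hat{\mathcal J}^{-}_{\lambda_2}\le\mathcal J^{-}_{\lambda_2}(u_1)<\mathcal J^{-}_{\lambda_1}(u_1)=\mathcal E_{\lambda_1}(u_1)=\hat{\mathcal J}^{-}_{\lambda_1}.
\]
The same computation handles $\hat{\mathcal J}^{+}$: if the minimizer lies in $\mathcal C^-$ it belongs to $\hat{\mathcal N}^{+}_\lambda$ for all $\lambda$, and if it lies in $\mathcal C^+$ the inequality $\lambda_2\le\lambda_{a,b}$ again keeps its projection admissible. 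This yields strict monotonicity on $(0,\lambda^*_{a,b}]$.

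For the limit, monotonicity already gives $\lim_{\lambda\uparrow\lambda^*_{a,b}}\hat{\mathcal J}^{\pm}_\lambda\ge\hat{\mathcal J}^{\pm}_{\lambda^*_{a,b}}$, so only the reverse inequality remains. Fix $\epsilon>0$ and choose $w$ admissible for $\hat{\mathcal E}^{-}_{\lambda^*_{a,b}}=\hat{\mathcal J}^{-}_{\lambda^*_{a,b}}$ with $\mathcal E_{\lambda^*_{a,b}}(t_{\lambda^*_{a,b}}(w)w)<\hat{\mathcal J}^{-}_{\lambda^*_{a,b}}+\epsilon$. Since $w\in\mathcal C^+$, every $\lambda<\lambda^*_{a,b}$ satisfies $w\in\hat{\mathcal N}_\lambda$, whence $\hat{\mathcal J}^{-}_\lambda\le\mathcal J^{-}_\lambda(w)=\mathcal E_\lambda(t^{-}_\lambda(w)w)$. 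Letting $\lambda\uparrow\lambda^*_{a,b}$ and invoking the fiber-continuity of the energy from Proposition \ref{p5.1}(i) gives
\[
\lim_{\lambda\uparrow\lambda^*_{a,b}}\hat{\mathcal J}^{-}_\lambda\le\mathcal E_{\lambda^*_{a,b}}(t_{\lambda^*_{a,b}}(w)w)<\hat{\mathcal J}^{-}_{\lambda^*_{a,b}}+\epsilon.
\]
Sending $\epsilon\to0$, and running the identical argument on the $+$ branch with $s_{\lambda^*_{a,b}}$ in place of $t_{\lambda^*_{a,b}}$, closes the equality.

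I expect the main obstacle to be precisely that the constraint set of each minimization problem depends on $\lambda$, so two infima cannot be compared competitor-by-competitor in the naive way. The resolution is the extremality bound $\lambda_{a,b}(u)\ge\lambda^*_{a,b}$ on $\mathcal C^+$, which certifies that any fixed test function remains admissible throughout $(0,\lambda^*_{a,b}]$, combined with the fiber-continuity of Proposition \ref{p5.1}(i) that is needed to pass to the endpoint, where the $t^{-}$ (resp. $t^{+}$) projection may degenerate into the $t^{0}$ projection.
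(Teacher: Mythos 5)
Your proof is correct and takes essentially the same route as the paper's: pointwise strict monotonicity of $\lambda\mapsto\mathcal J^{\pm}_{\lambda}(u)$ from Lemma \ref{p:2.10}, transferred to the infima by the admissibility bound $\lambda_{a,b}(u)\ge\lambda^*_{a,b}$ on $\mathcal C^+$, the identification $\hat{\mathcal J}^{\pm}_{\lambda^*_{a,b}}=\hat{\mathcal E}^{\pm}_{\lambda^*_{a,b}}$ together with the fiber-continuity of Proposition \ref{p5.1} at the endpoint, and a near-minimizer comparison for the limit. The differences are only presentational: you invoke the minimizers of Lemmas \ref{l3.1}--\ref{l3.2} where the paper compares arbitrary competitors, and you prove the limit directly with an $\epsilon$-near-minimizer where the paper argues by contradiction; in fact, your explicit appeal to the extremality bound $\lambda_{a,b}\ge\lambda^*_{a,b}$ on $\mathcal C^+$ spells out a step the paper leaves implicit.
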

	\begin{proof}
		We know from Lemma \ref{p:2.10} that $\mathcal{J}^{\pm}_{\lambda}$ is decreasing   on $(0,\lambda^*_{a,b})$, from there we can conclude that $\hat{\mathcal{J}}^{\pm}_{\lambda}$ is decreasing on $(0,\lambda^*_{a,b})$. To  show  that $\mathcal{J}^{\pm}_{\lambda}$  is decreasing  on $(0,\lambda^*_{a,b}]$ take $\lambda\in(0,\lambda^*_{a,b})$ arbitrary. Then for all $u\in \mathcal{N}^{-}_{\lambda^*_{a,b}}\cup \mathcal{N}^{0}_{\lambda^*_{a,b}}$ using decreasing behaviour of $\mathcal{J}^{-}_{\lambda}$ and Proposition \ref{p5.1}, we have 
		\begin{align*}
			\hat{\mathcal{J}}^{-}_{\lambda^*_{a,b}}=\hat{\mathcal E}^{-}_{\lambda^*_{a,b}}\leq \mathcal E_{\lambda^*_{a,b}}(t_{\lambda^*_{a,b}}(u)u)&={\lim_{{\Lambda} \downarrow \lambda^*_{a,b}}}\mathcal E_{\Lambda}(t^{-}_{{\Lambda}}(u)u)=\lim_{{\Lambda} \downarrow \lambda^*_{a,b}}\mathcal{J}^{-}_{{\Lambda}}(u)\\
			&<\lim_{{\Lambda} \downarrow \lambda^*_{a,b}}\mathcal{J}^{-}_{\lambda^*_{a,b}}(u)=\mathcal{J}^{-}_{\lambda^*_{a,b}}(u)<\mathcal{J}^{-}_{\lambda}(u),
		\end{align*}
		and hence $\hat{\mathcal{J}}^{-}_{\lambda^*_{a,b}}\leq \hat{\mathcal{J}}^-_{\lambda}$. Similarly it follows that $\hat{\mathcal{J}}^{+}_{\lambda^*_{a,b}}\leq \hat{\mathcal{J}}^+_{\lambda}$.
		
		To prove remaining  part take $\lambda_{n}\in(0,\lambda^*_{a,b}]$ such that $\lambda_n \uparrow \lambda^*_{a,b}$. Then $\hat{\mathcal{J}}^{-}_{\lambda_{n}}\geq \hat{\mathcal{J}}^{-}_{\lambda^*_{a,b}}$. Assume on contrary $\hat{\mathcal{J}}^{-}_{\lambda_{n}} \rightarrow J> \hat{\mathcal{J}}^{-}_{\lambda^*_{a,b}}$
		and suppose that there exists $\delta>0$ such that,   $J-\hat{\mathcal{J}}^{-}_{\lambda^*_{a,b}}\geq \delta$. Choose $\beta>0$ such that $2\beta<\delta$ and  $u(\beta)\in \mathcal{N}^{-}_{\lambda^*_{a,b}}$ such that $\mathcal{J}^{-}_{\lambda^*_{a,b}}(u(\beta))-\hat{\mathcal{J}}^{-}_{\lambda^*_{a,b}}\leq \beta$. Now, using continuity of $\mathcal{J}^{-}_{\lambda}$  when $\lambda<\lambda^*_{a,b}$, we have
		\[0\leq \mathcal{J}^{-}_{\lambda_{n}}(u(\beta))-\mathcal{J}^{-}_{\lambda^*_{a,b}}(u(\beta))\leq \beta\]
		for large  values of $n$. Thus,
		\begin{align*}
			\hat{\mathcal{J}}^{-}_{\lambda_{n}}&\leq \mathcal{J}^{-}_{\lambda_{n}}(u(\beta)) 
			\leq \mathcal{J}^{-}_{\lambda^*_{a,b}}(u(\beta))+\beta  
			\leq \hat{\mathcal{J}}^{-}_{\lambda^*_{a,b}}+2\beta 
			\leq J-\delta+2\beta < J. 
		\end{align*}
		
		Therefore,  for $n$  sufficiently large we have  $J<J$, an absurd. Hence $J=\hat{\mathcal{J}}^{-}_{\lambda^*_{a,b}}$. {Similarly, we can show that 	$\lim_{\lambda \uparrow \lambda^*_{a,b}} \hat{\mathcal{J}}^{+}_{\lambda}=\hat{\mathcal{J}}^{+}_{\lambda^*_{a,b}}.$}
	\end{proof}
	
	\begin{lemma}\label{prop5.6}
		There exists $u_{\lambda^*_{a,b}}\in \mathcal{N}^{-}_{\lambda^*_{a,b}}$ and $v_{\lambda^*_{a,b}}\in \mathcal{N}^{+}_{\lambda^*_{a,b}}$ such that $\hat{\mathcal{J}}^{-}_{\lambda^*_{a,b}}=\mathcal{J}^{-}_{\lambda^*_{a,b}}(u_{\lambda^*_{a,b}})$ and $\hat{\mathcal{J}}^{+}_{\lambda^*_{a,b}}=\mathcal{J}^{+}_{\lambda^*_{a,b}}(v_{\lambda^*_{a,b}})$.
	\end{lemma}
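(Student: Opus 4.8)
The plan is to realize $u_{\lambda^*_{a,b}}$ and $v_{\lambda^*_{a,b}}$ as strong limits of the sub-threshold minimizers produced in Lemmas \ref{l3.1} and \ref{l3.2}. Fix a sequence $\lambda_n \uparrow \lambda^*_{a,b}$. By Lemma \ref{l3.2} (resp. Lemma \ref{l3.1}) choose $u_{\lambda_n}\in\mathcal{N}^{-}_{\lambda_n}$ and $v_{\lambda_n}\in\mathcal{N}^{+}_{\lambda_n}$ with $\mathcal{J}^{-}_{\lambda_n}(u_{\lambda_n})=\hat{\mathcal{J}}^{-}_{\lambda_n}$ and $\mathcal{J}^{+}_{\lambda_n}(v_{\lambda_n})=\hat{\mathcal{J}}^{+}_{\lambda_n}$; since these are already projected onto their Nehari sets we have $\mathcal{E}_{\lambda_n}(u_{\lambda_n})=\hat{\mathcal{J}}^{-}_{\lambda_n}$ and $\mathcal{E}_{\lambda_n}(v_{\lambda_n})=\hat{\mathcal{J}}^{+}_{\lambda_n}$. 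By Proposition \ref{p5.2} these levels converge to $\hat{\mathcal{J}}^{-}_{\lambda^*_{a,b}}$ and $\hat{\mathcal{J}}^{+}_{\lambda^*_{a,b}}$, hence are uniformly bounded, and the coercivity of $\mathcal{J}^{\pm}_{\lambda}$ (uniform for $\lambda$ in the bounded range $(0,\lambda^*_{a,b}]$) forces $\{u_{\lambda_n}\}$ and $\{v_{\lambda_n}\}$ to be bounded in $X$. Passing to a subsequence, $u_{\lambda_n}\rightharpoonup u_{\lambda^*_{a,b}}$ and $v_{\lambda_n}\rightharpoonup v_{\lambda^*_{a,b}}$ weakly in $X$ and strongly in $L^q(\Omega)$ for $q\in[1,2^*_s)$, by the compact fractional Sobolev embedding.

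Next I would rule out trivial limits. For the $\mathcal{N}^{+}$ branch, recall $\hat{\mathcal{J}}^{+}_{\lambda^*_{a,b}}<0$ from Lemma \ref{l3.1}; if $v_{\lambda^*_{a,b}}\equiv0$, the strong $L^q$-convergence makes $\int_\Omega f|v_{\lambda_n}|^\gamma$ and $\int_\Omega g|v_{\lambda_n}|^p$ vanish, whence $\liminf_{n\to\infty}\mathcal{E}_{\lambda_n}(v_{\lambda_n})\ge 0$, contradicting $\mathcal{E}_{\lambda_n}(v_{\lambda_n})\to\hat{\mathcal{J}}^{+}_{\lambda^*_{a,b}}<0$. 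For the $\mathcal{N}^{-}$ branch, there is a uniform lower bound $\|u_{\lambda_n}\|_X\ge\rho>0$ valid on $\mathcal{N}^{-}_\lambda$ for $\lambda\in(0,\lambda^*_{a,b}]$; were $u_{\lambda^*_{a,b}}\equiv0$, then the Nehari identity $a\|u_{\lambda_n}\|^2_X+b\|u_{\lambda_n}\|^{2\theta}_X=\lambda_n\int_\Omega f|u_{\lambda_n}|^\gamma+\int_\Omega g|u_{\lambda_n}|^p$ would send the norm to $0$ (the right side vanishes by compactness), contradicting the lower bound. Thus both weak limits are nontrivial.

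The heart of the argument, and the step I expect to be hardest, is upgrading weak to strong convergence; here the varying parameter and the fact that the limit may a priori touch $\mathcal{N}^{0}_{\lambda^*_{a,b}}$ (so that the Nehari set is not a manifold) are the real obstacles. I would argue by contradiction in the spirit of Lemma \ref{l3.1}: if $u_{\lambda_n}\rightharpoonup u_{\lambda^*_{a,b}}$ with $\|u_{\lambda^*_{a,b}}\|_X<\liminf_{n\to\infty}\|u_{\lambda_n}\|_X$, then, using the strong convergence of the weighted nonlinear terms and the energy/projection continuity of Proposition \ref{p5.1}, the local-maximum projection $t_{\lambda^*_{a,b}}(u_{\lambda^*_{a,b}})$ furnished by Proposition \ref{p,2.2} (available because the uniform estimates on $\mathcal{N}^{-}_{\lambda_n}$ keep $u_{\lambda^*_{a,b}}\in\mathcal{C}^+$ in the limit) satisfies
\[
\mathcal{E}_{\lambda^*_{a,b}}\big(t_{\lambda^*_{a,b}}(u_{\lambda^*_{a,b}})\,u_{\lambda^*_{a,b}}\big)<\liminf_{n\to\infty}\mathcal{E}_{\lambda_n}(u_{\lambda_n})=\hat{\mathcal{J}}^{-}_{\lambda^*_{a,b}},
\]
which contradicts the identity $\hat{\mathcal{J}}^{-}_{\lambda^*_{a,b}}=\hat{\mathcal{E}}^{-}_{\lambda^*_{a,b}}$ (the infimum over $\mathcal{N}^{-}_{\lambda^*_{a,b}}\cup\mathcal{N}^{0}_{\lambda^*_{a,b}}$). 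Hence $u_{\lambda_n}\to u_{\lambda^*_{a,b}}$ in $X$; the same scheme, with $s_{\lambda^*_{a,b}}$ and the local-minimum projection, gives $v_{\lambda_n}\to v_{\lambda^*_{a,b}}$.

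Finally I would identify the branches. Each $u_{\lambda_n}$ solves $(P_{\lambda_n})$ weakly (being a critical point of $\mathcal{E}_{\lambda_n}$ for $\lambda_n<\lambda^*_{a,b}$, where $\mathcal{N}^{0}_{\lambda_n}=\emptyset$), so strong convergence lets me pass to the limit in Definition \ref{def} and conclude that $u_{\lambda^*_{a,b}}$ solves $(P_{\lambda^*_{a,b}})$; by Proposition \ref{cor,2.7} it cannot lie in $\mathcal{N}^{0}_{\lambda^*_{a,b}}$. On the other hand, passing to the limit (licit by strong convergence) in $\psi''_{\lambda_n,u_{\lambda_n}}(1)<0$ gives $\psi''_{\lambda^*_{a,b},u_{\lambda^*_{a,b}}}(1)\le0$, so $u_{\lambda^*_{a,b}}\in\mathcal{N}^{-}_{\lambda^*_{a,b}}\cup\mathcal{N}^{0}_{\lambda^*_{a,b}}$ (consistent with the closure relation of Proposition \ref{prop 4.2}); ruling out $\mathcal{N}^{0}_{\lambda^*_{a,b}}$ forces $u_{\lambda^*_{a,b}}\in\mathcal{N}^{-}_{\lambda^*_{a,b}}$, and then $\mathcal{J}^{-}_{\lambda^*_{a,b}}(u_{\lambda^*_{a,b}})=\mathcal{E}_{\lambda^*_{a,b}}(u_{\lambda^*_{a,b}})=\lim_{n\to\infty}\hat{\mathcal{J}}^{-}_{\lambda_n}=\hat{\mathcal{J}}^{-}_{\lambda^*_{a,b}}$. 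The identical reasoning yields $v_{\lambda^*_{a,b}}\in\mathcal{N}^{+}_{\lambda^*_{a,b}}$ with $\mathcal{J}^{+}_{\lambda^*_{a,b}}(v_{\lambda^*_{a,b}})=\hat{\mathcal{J}}^{+}_{\lambda^*_{a,b}}$.
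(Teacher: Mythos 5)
Your proposal follows the same overall scheme as the paper's proof: take $\lambda_n\uparrow\lambda^*_{a,b}$, pick the minimizers $u_{\lambda_n}\in\mathcal{N}^-_{\lambda_n}$, $v_{\lambda_n}\in\mathcal{N}^+_{\lambda_n}$ from Lemmas \ref{l3.1}--\ref{l3.2}, use Proposition \ref{p5.2} and coercivity for boundedness, extract weak limits, rule out the trivial limit, upgrade to strong convergence, and identify the branch by excluding $\mathcal{N}^0_{\lambda^*_{a,b}}$ via Proposition \ref{cor,2.7}. The one place you genuinely diverge is the weak-to-strong convergence step. The paper exploits the fact that each $u_{\lambda_n}$ is a weak solution of $(P_{\lambda_n})$ (established at the end of Section 4): it tests the equation with $u_{\lambda_n}-u_{\lambda^*_{a,b}}$ and uses compactness of the lower-order terms to get $\|u_{\lambda_n}\|_X\to\|u_{\lambda^*_{a,b}}\|_X$ directly; this also delivers, essentially for free, that the limit solves $(P_{\lambda^*_{a,b}})$, which is exactly what is needed to invoke Proposition \ref{cor,2.7}. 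You instead run the fibering/weak-lower-semicontinuity contradiction in the spirit of Lemma \ref{l3.1}: if the norm drops, the projection $t^*u_{\lambda^*_{a,b}}$, $t^*=t_{\lambda^*_{a,b}}(u_{\lambda^*_{a,b}})$, would have energy strictly below $\hat{\mathcal{E}}^-_{\lambda^*_{a,b}}=\hat{\mathcal{J}}^-_{\lambda^*_{a,b}}$, a contradiction. This route works (it is the one the paper itself uses in Lemma \ref{prop6.1} for $\lambda>\lambda^*_{a,b}$), but note the step you left implicit: your chain requires $\mathcal{E}_{\lambda_n}(t^*u_{\lambda_n})\le\mathcal{E}_{\lambda_n}(u_{\lambda_n})$ for large $n$, and since $1=t^-_{\lambda_n}(u_{\lambda_n})$ maximizes $\psi_{\lambda_n,u_{\lambda_n}}$ only on $[t^+_{\lambda_n}(u_{\lambda_n}),\infty)$ — and $\hat{\mathcal{J}}^-_\lambda$ need not be nonnegative near the extremal parameter — you must first place $t^*$ in the increasing portion of the fibers, e.g.\ by showing $\liminf_{n\to\infty}\psi'_{\lambda_n,u_{\lambda_n}}(t^*)>\psi'_{\lambda^*_{a,b},u_{\lambda^*_{a,b}}}(t^*)=0$, so that $t^*\in\bigl(t^+_{\lambda_n}(u_{\lambda_n}),1\bigr)$ for large $n$. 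With that inserted your argument is complete; afterwards, strong convergence lets you pass to the limit in the weak formulation of $(P_{\lambda_n})$, so Proposition \ref{cor,2.7} applies and the endgame coincides with the paper's. A minor bonus of your write-up: your nontriviality arguments (negativity of $\hat{\mathcal{J}}^+_{\lambda^*_{a,b}}$ for the plus branch, a uniform norm lower bound on $\mathcal{N}^-_\lambda$ for the minus branch) are more explicit than the paper's one-line dismissal.
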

	\begin{proof}
		We already  know from Lemma \ref{l3.1}, \ref{l3.2} that we have $u_{\lambda_n}\in \mathcal{N}^{-}_{\lambda_n}$  and $v_{\lambda_n}\in \mathcal{N}^{+}_{\lambda_n}$ such that $\mathcal{J}^{-}_{\lambda}(u_{\lambda_n})=\hat{\mathcal{J}}^{-}_{\lambda_n}$ and $\mathcal{J}^{+}_{\lambda_n}(v_{\lambda_n})=\hat{\mathcal{J}}^{+}_{\lambda_n}$ respectively when $\lambda_n<\lambda^*_{a,b}$  for each $n\in \mathbb{N}$. To show the existence of minimizers for $\mathcal E_{\lambda^*_{a,b}}$ we are looking to pass the limit $\lambda_n\uparrow\lambda^*_{a,b}$ in $\hat {\mathcal J}_{\lambda_n}^\pm$.  We know for each $\lambda_{n} <\lambda^*_{a,b}$, we can get a sequence   $\{u_{\lambda_n}\}\subset \mathcal{N}_{\lambda_{n}}^{-}$ such that $\hat{\mathcal{J}}^{-}_{\lambda_{n}}=\mathcal{J}^{-}_{\lambda_{n}}(u_{\lambda_n})$. Moreover, $\{u_{\lambda_n}\}$ is bounded, otherwise, if $\|u_{\lambda_{n}}\|_X \rightarrow\infty$, then  
		$\infty>\lim \hat{\mathcal{J}}^{-}_{\lambda_{n}}\geq \infty$, which is an absurd.
		Therefore, up to a subsequence, we get
		\[ \left\{
		\begin{array}{ll}
			u_{\lambda_n} \rightharpoonup u_{\lambda^*_{a,b}} \hspace{1.3cm}   \ \textrm{in} \ X, \\
			u_{\lambda_n} \rightarrow u_{\lambda^*_{a,b}} \ \textrm{in} \ \ \ \  \ \ \  \ \textrm{in} \ L^q(\Omega)  \  \forall \ {q\in[1,2^*_{ s})},\\
			u_{\lambda_n} \rightarrow u_{\lambda^*_{a,b}} \hspace{1.5cm}  \textrm{a.e.} \ \textrm{in} \ \Omega\\
		\end{array}
		\right.
		\] 
		with $u_{\lambda^*_{a,b}}\geq0$. Our claim is $u_{\lambda^*_{a,b}}\not\equiv0$ and $u_{\lambda^*_{a,b}}\in \mathcal C^+$. Otherwise, we would have $\|u_{\lambda_n}\|_{X}\rightarrow0$, an absurd.   {Now, once $u_{\lambda_{n}}$ is solution of $(P_{\lambda_{n}})$, for all $\phi\in X$, we have
			\[		(a+b\|u_{\lambda_n}\|^{2(\theta-1)}_{X})\iint_{\mathbb R^{2N}}  \frac{(u_{\lambda_n}(x)-u_{\lambda_n}(y))(\phi(x)-\phi(y))}{|x-y|^{N+2s}}dxdy\]\[-\lambda_{n}\int_{ \Omega}f u_{\lambda_{n}}^{\gamma-1}\phi dx-\int_{ \Omega}gu^{p-1}_{\lambda_n}\phi dx=0.\]Taking $u_{\lambda_n} - u_{\lambda^*_{a,b}}$ as a text function in above equation and suppose $\|u_{\lambda_n}\|_{X}\rightarrow \alpha>0$, we get
			\begin{align*}
				(a+&b\|u_{\lambda_n}\|^{2(\theta-1)}_{X})\iint_{\mathbb R^{2N}}  \frac{|u_{\lambda_n}(x)-u_{\lambda_n}(y)|^2}{|x-y|^{N+2s}}dxdy\\ &-(a+b\|u_{\lambda_n}\|^{2(\theta-1)}_{X})\iint_{\mathbb R^{2N}}  \frac{(u_{\lambda_n}(x)-u_{\lambda_n}(y))(u_{\lambda^*_{a,b}}(x)-u_{\lambda^*_{a,b}}(y))}{|x-y|^{N+2s}}dxdy\\&-\lambda_{n}\int_{ \Omega}f u_{\lambda_{n}}^{\gamma-1}(u_{\lambda_n}(x)-u_{\lambda^*_{a,b}}(x)) dx-\int_{ \Omega}gu^{p-1}_{\lambda_n}(u_{\lambda_n}-u_{\lambda^*_{a,b}})(x)dx=0.\end{align*}
			Using weak lower semi-continuity of norms, we get
			\begin{align*}
				&	(a+b\alpha^{2(\theta-1)})(\lim_{n\rightarrow\infty}\|u_{\lambda_n}\|_{X}^2-\|u_{\lambda^*_{a,b}}\|_{X}^2)=0.\end{align*}
			Therefore, $\lim_{n\rightarrow\infty} \|u_{\lambda_n}\|^{2}_{X}=\|u_{\lambda^*_{a,b}}\|^2_{X}$ ( because $a+b\alpha^{2\theta-2}>0$), which implies that $u_{\lambda_n}\rightarrow u_{\lambda^*_{a,b}}$ in $X$}.
		From strong convergence, we have 
		\[\psi'_{\lambda^*,u_{\lambda^*_{a,b}}}(1)=
		\lim_{n\to \infty}\psi'_{\lambda_{n},u_{\lambda_{n}}}(1)=0 \ \ \textrm{and} \ \ \psi''_{\lambda^*_{a,b},u_{\lambda^*_{a,b}}}(1)=\lim_{n\to \infty}\psi''_{\lambda_{n},u_{\lambda_{n}}}(1)\leq0.\]
		Also when $u_{\lambda_n}\in \mathcal{N}^{-}_{\lambda_n}$, we have
		\[0<b(2\theta-\gamma)\|u_{\lambda^*_{a,b}}\|^{2\theta}_{X}=b(2\theta-\gamma)\lim_{n\rightarrow\infty}\|u_{\lambda_n}\|^{2\theta}_{X}\leq(p-\gamma)\int_{ \Omega}gu^p_{\lambda^*_{a,b}}dx,\]
		thus, $u_{\lambda^*_{a,b}}\in \mathcal{N}^{-}_{\lambda^*_{a,b}}\cup \mathcal{N}^{0}_{\lambda^*_{a,b}}. $ From strong convergence of $\{u_{\lambda_{n}}\}$ and Proposition \ref{p5.2}, we have
		\[\mathcal E_{\lambda^*_{a,b}}(u_{\lambda^*_{a,b}})=\lim_{n\rightarrow\infty}\mathcal E_{\lambda_{n}} (u_{\lambda_{n}})=\lim_{\lambda_{n}\uparrow\lambda^*_{a,b}}\hat{\mathcal{J}}_{\lambda_{n}}^-=\hat{\mathcal{J}}^{-}_{\lambda^*_{a,b}}.\] We claim that $u_{\lambda^*_{a,b}}\not\in \mathcal{N}^{0}_{\lambda^*_{a,b}}$. If not, we get a contradiction from  Proposition \ref{cor,2.7}.
		Therefore $u_{\lambda^*_{a,b}}$ is a constrained minimizer of $\mathcal{E}_{\lambda^*_{a,b}}$ forced to be in $\mathcal{N}^{-}_{\lambda^*_{a,b}}$.
		
		In order to achieve second minimizer $v_{\lambda^*_{a,b}}$ of the energy functional $\mathcal{E}_{\lambda^*_{a,b}}$ , taking sequence $\{v_{\lambda_{n}}\} \subset \mathcal{N}^{+}_{\lambda_{n}}$  where $\lambda_{n}\uparrow\lambda^*_{a,b}$ and following same arguments as above we get the second  minimizer $ v_{\lambda^*_{a,b}}$  of $\mathcal{E}_{\lambda^*_{a,b}}$ such that 
		\[\mathcal E_{\lambda^*_{a,b}}(v_{\lambda^*_{a,b}})=\lim_{n\rightarrow\infty} \mathcal E_{\lambda}(v_{\lambda_n})=\lim_{n\rightarrow\infty} \hat{{\mathcal{J}}}^{+}_{\lambda_{n}}=\hat{\mathcal{J}}^{+}_{\lambda^*_{a,b}},\]
		forced to be in $\mathcal{N}^{+}_{\lambda^*_{a,b}}$ following the same argument as above. This completes the proof.
		
	\end{proof} 
	In this case, i.e., $\lambda=\lambda^*_{a,b}$  the non-trivial minimizers $u_{\lambda^*_{a,b}},v_{\lambda^*_{a,b}}$ obtained in Lemma \ref{prop5.6} are avoided to be in $\mathcal{N}^{0}_{\lambda^*_{a,b}}$ as in Proposition \ref{cor,2.7} and hence again from Theorem 2.3 of \cite{kj} are non trivial weak solutions of $(P_{\lambda^*_{a,b}})$. 
	
	\section{\textbf{Existence of  solutions when $\lambda> \lambda^*_{a,b}$}} Since, $\mathcal N_{\lambda^*_{a,b}}^0\not\equiv \emptyset$, the minimizers in $\mathcal N_{\lambda}^\pm$ may not be the critical points of the associated energy functional of the problem \eqref{p}. Therefore, we look for the minimizers of associated energy functional over  subsets of $\mathcal{N}^{+}_{\lambda^*_{a,b}}$ and $\mathcal{N}^{-}_{\lambda^*_{a,b}}$ (defined below) which are strictly separated from  $\mathcal{N}^{0}_{\lambda^*_{a,b}}$. Subsequently, projections of  these minimizers lying in $\mathcal N_{\lambda}^\pm$ for $\lambda\in (\lambda^*_{a,b}, \lambda^*_{a,b}+\epsilon)$ for sufficiently small $\epsilon>0$ turn out to be the desired critical points.

	\begin{proposition} \label{prml}	Let $0<c^+<c^-$, $a\geq0, b>0$ and $\lambda_{n}\downarrow\lambda^*_{a,b}$
		\begin{itemize}
			\item [(i)]
			if $u_{n}\in \mathcal{N}^{-}_{\lambda^*_{a,b}}$ which satisfy $c^+\leq \|u_n\|_{X}\leq c^-$ for all $n\in \mathbb N$ and
			\begin{align*}
				a(t_{\lambda_{n}}^{-}(u_n))^{2}\|u_n\|^{2}_X&+b	(2\theta-1)(t_{\lambda_{n}}^{-}(u_n))^{2\theta}\|u_n\|_{X}^{2\theta}-\lambda(\gamma-1)(t_{\lambda_{n}}^{-}(u_n))^{\gamma}\\
				&\lambda_n \int_{ \Omega}f|u_{n}|^{\gamma}dx-(p-1)(t_{\lambda_{n}}^{-}(u_n))^{p}\int_\Omega g|u_n|^pdx\rightarrow 0 \end{align*} 
			as $n\rightarrow\infty$, then $\textrm{dist}(u_n,\mathcal{N}^{0}_{\lambda^*_{a,b}})\rightarrow0$.
			\item [(ii)]
			if $v_{n}\in \mathcal{N}^{+}_{\lambda^*_{a,b}}$ which satisfy $c^+\leq \|v_n\|_{X}\leq c^-$ for all $n\in \mathbb N$ and
			\begin{align*}
				a(t_{\lambda_{n}}^{+}(v_n))^{2}\|v_n\|^{2}_X&+b(2\theta-1)(t_{\lambda_{n}}^{+}(v_n))^{2\theta}\|v_n\|_{X}^{2\theta}-\lambda(\gamma-1)(t_{\lambda_{n}}^{+}(v_n))^{\gamma}\\&\lambda_n \int_{ \Omega}f|v_{n}|^{\gamma}dx-(p-1)(t_{\lambda_{n}}^{-}(v_n))^{p}\int_\Omega g|v_n|^pdx\rightarrow 0 \end{align*} 
			as $n\rightarrow\infty$, then $\textrm{dist}(v_n,\mathcal{N}^{0}_{\lambda^*_{a,b}})\rightarrow0$.
		\end{itemize}
	\end{proposition}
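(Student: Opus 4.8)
The plan is to argue by contradiction and prove part (i); part (ii) is symmetric. Suppose the conclusion fails, so that along a subsequence $\textrm{dist}(u_n,\mathcal N^0_{\lambda^*_{a,b}})\ge\delta>0$. Since $c^+\le\|u_n\|_X\le c^-$, the sequence is bounded, so up to a subsequence $u_n\rightharpoonup u_0$ in $X$, and the compact fractional Sobolev embeddings (used throughout Section~4) give $\int_\Omega f|u_n|^\gamma\to\int_\Omega f|u_0|^\gamma=:A$ and $\int_\Omega g|u_n|^p\to\int_\Omega g|u_0|^p=:B$. The first observation is that the hypothesised expression equals $(t^-_{\lambda_n}(u_n))^2\psi''_{\lambda_n,u_n}(t^-_{\lambda_n}(u_n))=\psi''_{\lambda_n,w_n}(1)$, where $w_n:=t^-_{\lambda_n}(u_n)u_n\in\mathcal N^-_{\lambda_n}$; thus the hypothesis reads $\psi''_{\lambda_n,w_n}(1)\to0$ while $\psi'_{\lambda_n,w_n}(1)=0$. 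Writing $t_n:=t^-_{\lambda_n}(u_n)$, Lemma~\ref{p:2.10} together with $\lambda_n\downarrow\lambda^*_{a,b}$ gives $t_n<t^-_{\lambda^*_{a,b}}(u_n)=1$, while the inequality $b(2\theta-\gamma)\|w_n\|^{2\theta}_X\le(p-\gamma)\int_\Omega g|w_n|^pdx$, valid on $\mathcal N^-$ (obtained from $\psi''-(\gamma-1)\psi'$ at $t=1$), keeps $\|w_n\|_X$ and hence $t_n$ bounded away from $0$; so $t_n\to t_0\in(0,1]$. Since $\lambda_{a,b}$ is $0$-homogeneous this rescaling leaves it unchanged, and it is convenient to track $u_n$ directly.

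The next step is to identify the weak limit as an extremal function. The same lower bound forces $B\ge b(2\theta-\gamma)(c^+)^{2\theta}/(p-\gamma)>0$, so $u_0\ne0$ and $u_0\in\mathcal C^+$; in particular $\lambda_{a,b}(u_0)$ is defined and, by the infimum property, $\lambda_{a,b}(u_0)\ge\lambda^*_{a,b}$. Crucially, along the $\mathcal N^-$ branch the quantity $\psi''_{\lambda,u}(t^-_\lambda(u))$ stays uniformly negative on $\{c^+\le\|u\|_X\le c^-\}$ and tends to $0$ precisely as $\lambda\uparrow\lambda_{a,b}(u)$ (the two fibering critical points merge at $t_*(u)$); hence $\psi''_{\lambda_n,w_n}(1)\to0$ forces $\lambda_{a,b}(u_n)-\lambda_n\to0$, i.e. $\lambda_{a,b}(u_n)\to\lambda^*_{a,b}$. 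Combined with the weak lower semicontinuity of $\lambda_{a,b}$ (Remark~\ref{lwsc}), $\lambda_{a,b}(u_0)\le\liminf_n\lambda_{a,b}(u_n)=\lambda^*_{a,b}$, so $\lambda_{a,b}(u_0)=\lambda^*_{a,b}$ and $u_0$ attains the extremal value.

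The heart of the argument, and the step I expect to be the main obstacle, is upgrading weak convergence to strong convergence, i.e. ruling out a loss of mass $\ell:=\lim_n\|u_n\|^2_X>\|u_0\|^2_X$. The key structural fact is that $\lambda_{a,b}$ depends on $u$ only through the triple $\big(\|u\|^2_X,\int_\Omega f|u|^\gamma,\int_\Omega g|u|^p\big)$ and is strictly increasing in the first slot: this is transparent from the closed form for $\lambda(u)$ recorded for the degenerate case, and in the non-degenerate case it follows from the defining system \eqref{one}, whose root $t_{a,b}(u)$, and hence $\lambda_{a,b}(u)$, is determined by that same triple. Since the lower order terms converge strongly, $\lambda^*_{a,b}=\lim_n\lambda_{a,b}(u_n)$ is the value of this function at $(\ell,A,B)$; were $\ell>\|u_0\|^2_X$, strict monotonicity would give $\lambda^*_{a,b}>\lambda_{a,b}(u_0)\ge\lambda^*_{a,b}$, a contradiction. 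Hence $\|u_n\|_X\to\|u_0\|_X$ and $u_n\to u_0$ in $X$.

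Finally, passing to the strong limit in $\psi'_{\lambda^*_{a,b},u_n}(1)=0$ and $\psi''_{\lambda^*_{a,b},u_n}(1)<0$ (recall $u_n\in\mathcal N^-_{\lambda^*_{a,b}}$, so $t^-_{\lambda^*_{a,b}}(u_n)=1$) shows $u_0\in\mathcal N_{\lambda^*_{a,b}}\cap\mathcal C^+$; together with $\lambda_{a,b}(u_0)=\lambda^*_{a,b}$ and the characterization $\mathcal N^0_{\lambda^*_{a,b}}=\{u\in\mathcal N_{\lambda^*_{a,b}}\cap\mathcal C^+:\lambda_{a,b}(u)=\lambda^*_{a,b}\}$, this yields $u_0\in\mathcal N^0_{\lambda^*_{a,b}}$. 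Then $u_n\to u_0$ forces $\textrm{dist}(u_n,\mathcal N^0_{\lambda^*_{a,b}})\to0$, contradicting $\textrm{dist}(u_n,\mathcal N^0_{\lambda^*_{a,b}})\ge\delta$. For part (ii) the only extra point is that a sequence $v_n\in\mathcal N^+_{\lambda^*_{a,b}}$ might a priori lie in $\mathcal C^-$, where $\Phi_v$ is strictly increasing (fig.~1(a)) and the unique $\mathcal N^+$-projection has $\psi''$ bounded below by a positive constant on $\{c^+\le\|v\|_X\le c^-\}$; the hypothesis $\psi''\to0$ therefore pushes $v_n$ into $\mathcal C^+$, after which the argument above applies verbatim.
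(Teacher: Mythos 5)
Your proof follows the same route as the paper's --- show that the vanishing of $t_n^2\psi''_{\lambda_n,u_n}(t_n)$, $t_n=t^-_{\lambda_n}(u_n)$, forces $\lambda_{a,b}(u_n)\to\lambda^*_{a,b}$, then pass to a strong limit and identify it as a point of $\mathcal N^0_{\lambda^*_{a,b}}$ --- but the first of these two steps, which is the heart of the proposition, is asserted rather than proved. Concretely, the sentence claiming that $\psi''_{\lambda,u}(t^-_\lambda(u))$ ``stays uniformly negative on $\{c^+\le\|u\|_X\le c^-\}$ and tends to $0$ precisely as $\lambda\uparrow\lambda_{a,b}(u)$'', from which you conclude $\lambda_{a,b}(u_n)-\lambda_n\to0$, is exactly what has to be established. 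For a fixed $u$ the merging of the two fibering roots as $\lambda\uparrow\lambda_{a,b}(u)$ is clear from Proposition \ref{p,2.2}, but you need the statement uniformly along the sequence $(u_n)$: nothing you write excludes degenerations such as $\int_\Omega f|u_n|^\gamma dx\to0$, in which case $\lambda_{a,b}(u_n)\to\infty$ while the fibering maps converge to one whose maximum is nondegenerate, so no contradiction with the hypothesis is immediate. Ruling out such degenerations is where the work lies. The paper does this by explicit computation: it writes the Nehari identities for both projections $t^{\pm}_{\lambda_n}(u_n)$ together with the vanishing second-order relation (system \eqref{three}), eliminates the two integrals to obtain \eqref{nm}, concludes $t^{+}_{\lambda_n}(u_n)/t^{-}_{\lambda_n}(u_n)\to1$ because $1$ is the only zero of the explicit function $m(t)$, and only then reads off $\lambda_{a,b}(u_n)\to\lambda^*_{a,b}$ from the defining system \eqref{one}. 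Your own finite-dimensional observation (all relevant quantities depend on $u$ only through the triple $\bigl(\|u\|^2_X,\int_\Omega f|u|^\gamma dx,\int_\Omega g|u|^p dx\bigr)$) could be turned into a compactness-based proof of the missing claim, but that argument --- including a positive lower bound on $\int_\Omega f|u_n|^\gamma dx$, which the paper also needs and explicitly flags --- is absent from your write-up.

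The remainder of your argument is sound, and in one place tighter than the paper: your strong-convergence step, via continuity and strict monotonicity of $\lambda_{a,b}$ as a function of $\|u\|^2_X$ with the two integrals held fixed (transparent from the tangency characterization $\lambda_{a,b}(u)=\sup_{t>0}\Phi_u(t)\big/\int_\Omega f|u|^\gamma dx$ encoded in \eqref{one}), is a clean replacement for the paper's terse appeal to ``the same argument as in the proof of Proposition \ref{luch}'' and Remark \ref{lwsc}. Note, however, that part (ii) inherits the same gap as part (i): the claim that $\psi''$ at the $\mathcal N^+$-projection of $\mathcal C^-$-elements is uniformly bounded below by a positive constant on the annulus is again asserted without proof, and it is of exactly the same nature as the uniform statement missing in part (i).
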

	\begin{proof} $(i)$  For $u_n\in \mathcal{N}^{-}_{\lambda^*_{a,b}}$, we have $ \displaystyle \int_\Omega g|u_n|^pdx\geq c^+$. Our claim is $\int_{ \Omega}f|u_n|^{\gamma}dx\geq c^+$. 
		Therefore   from Proposition {\ref{p,2.2}}, there exist two critical points   $t_{\lambda_n}^{-}=t_{\lambda_n}^{-}(u_n)$ and $t_{\lambda_{n}}^{+}(u_n)=t_{\lambda_n}^{+}$  for $\psi_{\lambda_{n},u_n}$ for large $n$  
		satisfying $t_{\lambda_{n}}^{+}(u_n)<t_{\lambda_{n}}^{-}(u_n)$ such that $t_{\lambda_{n}}^{+}u_n\in \mathcal{N}_{\lambda_{n}}^{+}$ and $t_{\lambda_{n}}^{-}u_n\in \mathcal{N}_{\lambda_{n}}^{-}$. That is,
		\begin{equation}\label{three}	\left\{
			\begin{aligned}&a(t_{\lambda_{n}}^{-})^{2}\|u_n\|^{2}_X+b(t_{\lambda_{n}}^{-})^{2\theta}\|u_n\|_{X}^{2\theta}-\lambda_n(t_{\lambda_{n}}^{-})^{\gamma}\int_{ \Omega}f|u_n|^{\gamma}dx\\&\quad-(t_{\lambda_{n}}^{-})^{p}\int_{ \Omega}g|u_n|^pdx=0,\\
				&a(t_{\lambda_{n}}^{-})^{2}\|u_n\|^{2}_X+b(2\theta-1)(t_{\lambda_{n}}^{-})^{2\theta}\|u_n\|_{X}^{2\theta}- \lambda_n(\gamma-1)(t_{\lambda_{n}}^{-})^{\gamma}\int_{ \Omega}f|u_n|^\gamma dx\\&\quad - (t_{\lambda_{n}}^{-})^{p}(p-1)\int_{ \Omega}g(x)|u_n|^pdx=o(1),\\&
				a(t_{\lambda_{n}}^{+})^{2}\|u_n\|^{2}_X+b(t_{\lambda_{n}}^{+})^2\|u_n\|_{X}^2-\lambda_n(t_{\lambda_{n}}^{+})^{\gamma}\int_{ \Omega}f|u_n|^{\gamma}dx\\&\quad-(t_{\lambda_{n}}^{+})^{p}\int_{ \Omega}|u_n|^pdx=0.\end{aligned}
			\right. 
		\end{equation}
		Getting the values of $\int_{ \Omega}f |u_n|^{\gamma}dx$ and $\displaystyle \int_\Omega g(x)|u_n|^pdx$   by solving first and third equations of (\ref{three}) and substituting the values in second equation of \eqref{three} and using simple calculus, we get
		\begin{equation}\label{nm}
			\begin{aligned}
				a\|u_n\|_{X}^{2}(t^-_{\lambda_n})^{2}&\left(1-(\gamma-1)\left(\frac{1-\left(\frac{t^{+}_{\lambda_{n}}}{t^{-}_{\lambda_{n}}}\right)^{2-p} }{1-\left(\frac{t^{+}_{\lambda_{n}}}{t^{-}_{\lambda_{n}}}\right)^{\gamma-p}}\right)-{(p-1)}\left(\frac{1-\left(\frac{t^{+}_{\lambda_{n}}}{t^{-}_{\lambda_{n}}}\right)^{2-\gamma} }{1+\left(\frac{t^{+}_{\lambda_{n}}}{t^{-}_{\lambda_{n}}}\right)^{p-\gamma}}\right)\right)\\&+
				b\|u_n\|_{X}^{2\theta}(t^-_{\lambda_n})^{2\theta}\left((2\theta-1)-(\gamma-1)\left(\frac{1-\left(\frac{t^{+}_{\lambda_{n}}}{t^{-}_{\lambda_{n}}}\right)^{2\theta-p} }{1-\left(\frac{t^{+}_{\lambda_{n}}}{t^{-}_{\lambda_{n}}}\right)^{\gamma-p}}\right)\right.\\&\left.\hspace{3.5cm}-{(p-1)}\left(\frac{1-\left(\frac{t^{+}_{\lambda_{n}}}{t^{-}_{\lambda_{n}}}\right)^{2\theta-\gamma} }{1-\left(\frac{t^{+}_{\lambda_{n}}}{t^{-}_{\lambda_{n}}}\right)^{p-\gamma}}\right)\right) =o(1).
			\end{aligned}
		\end{equation}
		{Since both term in left hand side of above equation are positive and entire term converges to zero implies both the term will converge to zero separately.} {Suppose $t^-_{\lambda_n}\rightarrow\alpha$, $t^+_{\lambda_n}\rightarrow\beta$. Taking limit $n\rightarrow\infty$ in \eqref{nm}}  {we get, $t_{\lambda_{n}}^{-}(u_n)$$\setminus$  $t_{\lambda_{n}}^{+}(u_n)\rightarrow 1$,i.e. $\alpha=\beta$, as $1$ is the only zero of 
			\[m(t)=(2p-2\theta-2)t^{\gamma-p}+(\gamma-p)t^{2-p}+(\gamma-p)t^{2\theta-p}+2-2\gamma+2\theta.\]} Once $t^{+}_{\lambda_{n}} u_n\in \mathcal{N}^{+}_{\lambda_n}$, {from \eqref{bound} we have  $\int_{ \Omega}f|u_n|^{\gamma}\geq c$}.  Thus
		\begin{align*}a\|{\alpha}u_n\|^2_{X}+b\|{\alpha}u_n\|_{X}^{2\theta}-\lambda^*_{a,b}\int_{\Omega}f| {\alpha}u_n|^{\gamma}dx-\displaystyle \int_{ \Omega}g(x)|{\alpha}u_n|^pdx=o(1),\\
			a\|{\alpha}u_n\|^2_{X}+b(2\theta-1)\|{\alpha}u_n\|_{X}^{2\theta}-\lambda^*_{a,b}(\gamma-1) \int_{ \Omega}f| {\alpha} u_{n}|^{\gamma}dx&\\
			\quad-(p-1) \displaystyle \int_{ \Omega}g(x)|{\alpha} u_n|^pdx
			&=o(1). \end{align*}
		Solving these two we get
		\[\frac{a(p-2)\|{\alpha} u_n\|^2_{X}+b(p-2\theta)\|{\alpha} u_n\|^{2\theta}_{X}}{(p-\gamma)f |{\alpha} u_n|^{\gamma}dx}=\lambda^*_{a,b}+o(1), \]\[ 
		\frac{a(\gamma-2)\|{\alpha} u_n\|^2_{X}+b(\gamma-2\theta)\|{\alpha} u_n\|^{2\theta}_{X}}{ (\gamma-p)\displaystyle \int_{ \Omega}g(x)|{\alpha} u_n|^pdx}=1+o(1).\]
		Therefore, from expression of $\lambda(u)$, we have
		\[\lambda({\alpha} u_n)=\lambda(u_n)=(1+o(1))^\frac{2\theta-\gamma}{p-2\theta}(\lambda^*+o(1))\]
		leading to $\lambda(u_n)\rightarrow\lambda^*$. Hence $u_n$ is a bounded minimizing sequence for $\lambda^*$. Now, by following same argument as in proof of Proposition \ref{luch}, up to a subsequence,  we obtain $u_n\rightarrow u \in \mathcal{N}^{0}_{\lambda^*}$ and thus $\textrm{dist}(u_n,\mathcal{N}^0_{\lambda^*})\rightarrow0$ as $n\rightarrow\infty$. Similarly from expression of $\lambda_{a,b}(u_n)$, we have $\lambda_{a,b}(u_n)\rightarrow\lambda^*_{a,b}$ and thus $\textrm{dist}(u_n, \mathcal{N}^0_{\lambda^*_{a,b}})\rightarrow0$.
		
		$(ii)$ Follows similarly from item $(i)$.
	\end{proof}
	Consider the sets
	\[\mathcal{N}^{-}_{\lambda^*_{a,b},d,c^-}=\{u\in \mathcal{N}^{-}_{\lambda^*_{a,b}}:\textrm{dist}(u,\mathcal{N}^{0}_{\lambda^*_{a,b}})\geq d,\|u\|_{X}\leq c^-\},\] and
	\[\mathcal{N}^{+}_{\lambda^*_{a,b},d,c^+}=\{v\in \mathcal{N}^{+}_{\lambda^*_{a,b}}:\textrm{dist}(v,\mathcal{N}^{0}_{\lambda^*_{a,b}})\geq d,\|v\|_{X}\geq c^+\},\]
	where $c^\pm \ \textrm{and} \ d$ are positive constants.

	In light of above proposition one can observe that    elements of $\mathcal{N}^{\pm}_{\lambda,d,c^\pm}$ can be projected over $\mathcal{N}^{\pm}_{\lambda}$ when $\lambda\downarrow\lambda^*_{a,b}$ i.e., if $u\in \mathcal{N}^\pm_{\lambda,d,c^\pm}$ there exists $\epsilon>0$ such that $u\in \hat{\mathcal{N}}_{\lambda}$ {or in $\hat{\mathcal{N}}_{\lambda} \cup \hat{\mathcal{N}}^{+}_{\lambda}$} for all $\lambda\in(\lambda^*_{a,b},\lambda^*_{a,b}+\epsilon)$.    
	Also it is important to observe that $\textrm{dist}(\mathcal{M}^{\pm}_{\lambda^*_{a,b}}, \mathcal{N}^{0}_{\lambda^*_{a,b}})>0$, where
	\[\mathcal{M}^{\pm}_{\lambda}=\{u\in \mathcal{N}^{\pm}_{\lambda}: \mathcal{J}^{\pm}_{\lambda}(u)=\hat{\mathcal{J}}^{\pm}_{\lambda}\}.\]
	In fact, $\textrm{dist}(\mathcal{M}^{\pm}_{\lambda^*_{a,b}}, \mathcal{N}^{0}_{\lambda^*_{a,b}})\to 0$ we can get a contradiction to the fact that no solution of $(P_{\lambda^*_{a,b}}) $ can be in $\mathcal{N}^{0}_{\lambda^*_{a,b}}$. 
	
	Clearly the sets $\mathcal{M}^{\pm}_{\lambda}\neq \emptyset$ for all $\lambda\in(0,\lambda^*_{a,b}]$. Now define 
	
	\[d^{+}_{\lambda^*_{a,b}}=\textrm{dist}(\mathcal{M}^{+}_{\lambda^*_{a,b}}, \mathcal{N}^0_{\lambda^*_{a,b}})  \  \ \textrm{and} \ \ d^{-}_{\lambda^*_{a,b}}=\textrm{dist}(\mathcal{M}^{-}_{\lambda^*_{a,b}}, \mathcal{N}^0_{\lambda^*_{a,b}}).\]
	
	Now choose $c^-_{\lambda^*_{a,b}}<c^-$ such that $\|u\|_{X}\leq c^-_{\lambda^*_{a,b}}$ for all $u\in \mathcal{M}^{-}_{\lambda^*_{a,b}}$ and $d^{-}\in (0,d^{-}_{\lambda^*_{a,b}})$.  With such controls for all $\lambda\in(\lambda^*_{a,b},\lambda^*_{a,b}+\epsilon)$ we will study following minimization problem
	\begin{equation}\label{eq 6.1}
		\hat{{\mathcal{J}}}^{-}_{\lambda,d^-,c^-}=\textrm{inf} \{\mathcal{J}^{-}_{\lambda}(u):u\in \mathcal{N}^{-}_{\lambda^*_{a,b},d^-,c^-}\}.
	\end{equation}
	In a similar way, we define with the choice of $d^+<d^{+}_{\lambda^*_{a,b}}$ and $c^+<c^+_{\lambda^*_{a,b}}$ where  $\|u\|_{X}\geq c^+_{\lambda^*_{a,b}}$ for all $u\in \mathcal{M}^{+}_{\lambda^*_{a,b}}$, 
	\begin{equation*}
		\hat{{\mathcal{J}}}^{+}_{\lambda,d^+,c^+}=\textrm{inf} \{\mathcal{J}^{+}_{\lambda}(v):v\in \mathcal{N}^{+}_{\lambda^*_{a,b},d^+,c^+}\}
	\end{equation*}
	for all $\lambda\in(\lambda^*_{a,b},\lambda^*_{a,b}+\epsilon)$. With such choice of $c^\pm,d^{\pm}$ we can observe that  $\mathcal{M}^{-}_{\lambda^*_{a,b}}\subset\mathcal{N}^{-}_{\lambda^*_{a,b},d^{-},c^-}$ as, if $u\in\mathcal{M}^{-}_{\lambda^*_{a,b}} $ we have $\textrm{dist}(u,\mathcal{N}^{0}_{\lambda^*_{a,b}})=d^{-}_{\lambda^*}> d^-$ implies $u\in \mathcal{N}^{-}_{\lambda^*_{a,b},d^{-},c^-}$. Similarly $\mathcal{M}^{+}_{\lambda^*_{a,b}}\subset\mathcal{N}^{+}_{\lambda^*_{a,b},d^{+},c^+}$.\\

	\begin{lemma}\label{prop6.1}
		For the above choices of $c^-,d^{-}$ there exits $\epsilon^{-}>0$ such that the functional $\mathcal{J}^{-}_{\lambda}$ constrained to $ \mathcal{N}^{-}_{\lambda^*_{a,b},d^-,c^-}$ has a minimizer ${u({\lambda})}\in$  $\mathcal{N}^{-}_{\lambda^*_{a,b},d^-,c^-}$ for all $\lambda\in(\lambda^*_{a,b},\lambda^*_{a,b}+\epsilon^-)$.
	\end{lemma}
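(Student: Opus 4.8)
The plan is to apply the direct method to $\mathcal J^{-}_{\lambda}$ on the closed, norm-bounded set $K:=\mathcal{N}^{-}_{\lambda^*_{a,b},d^-,c^-}$; the only feature beyond Lemma~\ref{l3.1} is that $K$ is anchored at the extremal level while the energy is read at $\lambda>\lambda^*_{a,b}$, so I must first make the projections $t^{-}_{\lambda}(u)$ available on $K$. This is exactly where Proposition~\ref{prml}(i) enters, as already noted just before the statement: if some sequence in $K$ had $\lambda_{a,b}(u_n)\downarrow\lambda^*_{a,b}$, it would be a bounded minimizing sequence for $\lambda^*_{a,b}$ and hence satisfy $\textrm{dist}(u_n,\mathcal{N}^{0}_{\lambda^*_{a,b}})\to0$, contradicting $\textrm{dist}(\cdot,\mathcal{N}^{0}_{\lambda^*_{a,b}})\ge d^-$ on $K$. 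Thus $\inf_{u\in K}\lambda_{a,b}(u)\ge\lambda^*_{a,b}+\delta_0$ for some $\delta_0>0$, and taking $\epsilon^-\le\delta_0$ forces $\lambda<\lambda_{a,b}(u)$, i.e. $u\in\hat{\mathcal{N}}_{\lambda}$, for all $u\in K$ and all $\lambda\in(\lambda^*_{a,b},\lambda^*_{a,b}+\epsilon^-)$. Hence $\mathcal J^{-}_{\lambda}$ is well defined on $K$; since $\mathcal{M}^{-}_{\lambda^*_{a,b}}\subset K$ gives $K\neq\emptyset$ and $\mathcal J^{-}_{\lambda}$ is coercive, $\hat{\mathcal J}^{-}_{\lambda,d^-,c^-}$ is finite.

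Fix such a $\lambda$ and let $\{u_n\}\subset K$ be a minimizing sequence for $\mathcal J^{-}_{\lambda}$. Since $\|u_n\|_{X}\le c^-$, up to a subsequence $u_n\rightharpoonup u$ in $X$, $u_n\to u$ in $L^q(\Omega)$ for $q\in[1,2^*_s)$ and a.e., so by compactness $\int_\Omega f|u_n|^\gamma\to\int_\Omega f|u|^\gamma$ and $\int_\Omega g|u_n|^p\to\int_\Omega g|u|^p$. Using that $u_n\in\mathcal{N}_{\lambda^*_{a,b}}$ together with the usual positive lower bound for the norm on the Nehari manifold, I obtain $u\not\equiv0$; and combining the defining $\mathcal{N}^{-}$-inequality with $\textrm{dist}(u_n,\mathcal{N}^{0}_{\lambda^*_{a,b}})\ge d^-$ yields $\int_\Omega g|u|^p>0$, so $u\in\mathcal{C}^+$.

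The decisive, and hardest, step is to promote weak to strong convergence, the difficulty being that the two constraints tying $K$ to the extremal manifold, namely $u\in\mathcal{N}^{-}_{\lambda^*_{a,b}}$ and $\textrm{dist}(\cdot,\mathcal{N}^{0}_{\lambda^*_{a,b}})\ge d^-$, are not stable under weak convergence alone. I would set $w_n:=t^{-}_{\lambda}(u_n)u_n\in\mathcal{N}^{-}_{\lambda}$, which is bounded with $t^{-}_{\lambda}(u_n)$ bounded away from $0$ and $\infty$, so that $\mathcal J^{-}_{\lambda}(u_n)=\mathcal E_{\lambda}(w_n)$ and $t^{-}_{\lambda}(w_n)=1$, reducing matters to a minimizing sequence for $\mathcal E_{\lambda}$ on $\mathcal{N}^{-}_{\lambda}$ exactly as in Lemma~\ref{l3.1}. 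If $w_n\not\to w$ strongly then $\|w\|_{X}<\liminf_n\|w_n\|_{X}$, and evaluating $\liminf_n\psi'_{\lambda,w_n}$ at the projection $t^{-}_{\lambda}(w)$ against $\psi'_{\lambda,w}(t^{-}_{\lambda}(w))=0$, using the strict weak lower semicontinuity of the norm terms and the convergence of the weighted integrals, produces a strictly lower competitor on the ray of $w$, contradicting the definition of $\hat{\mathcal J}^{-}_{\lambda,d^-,c^-}$. Two points make this contradiction legitimate: Proposition~\ref{prml} keeps the projected points uniformly non-degenerate (bounded away from $\mathcal{N}^{0}_{\lambda^*_{a,b}}$), so the fibering picture of Proposition~\ref{p,2.2} holds uniformly; and the slack in the choice $c^-_{\lambda^*_{a,b}}<c^-$, $d^-<d^-_{\lambda^*_{a,b}}$ (under which $\mathcal{M}^{-}_{\lambda^*_{a,b}}\subset K$ strictly) guarantees that the $\mathcal{N}^{-}_{\lambda^*_{a,b}}$-representative of the competitor still lies in $K$, i.e. the minimizer does not stick to the boundary of the constraint region. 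This admissibility issue is the genuine obstacle and the reason the auxiliary constants $c^\pm,d^\pm$ were introduced.

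Once $w_n\to w$ strongly, dividing by $t^{-}_{\lambda}(u_n)\to\tau>0$ gives $u_n\to u$ in $X$. Passing to the limit in $\psi'_{\lambda^*_{a,b},u_n}(1)=0$ and $\psi''_{\lambda^*_{a,b},u_n}(1)<0$ yields $\psi'_{\lambda^*_{a,b},u}(1)=0$ and $\psi''_{\lambda^*_{a,b},u}(1)\le0$; strong convergence makes $\textrm{dist}(\cdot,\mathcal{N}^{0}_{\lambda^*_{a,b}})$ continuous, so $\textrm{dist}(u,\mathcal{N}^{0}_{\lambda^*_{a,b}})\ge d^-$ excludes $\psi''_{\lambda^*_{a,b},u}(1)=0$ and places $u\in\mathcal{N}^{-}_{\lambda^*_{a,b}}$. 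The remaining constraints $\|u\|_{X}\le c^-$ and $\textrm{dist}(u,\mathcal{N}^{0}_{\lambda^*_{a,b}})\ge d^-$ pass to the limit, so $u\in K$, and continuity of $\mathcal J^{-}_{\lambda}$ along the strongly convergent sequence gives $\mathcal J^{-}_{\lambda}(u)=\hat{\mathcal J}^{-}_{\lambda,d^-,c^-}$. Setting $u(\lambda):=u$ completes the argument.
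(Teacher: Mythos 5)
Your setup (projectability of $K:=\mathcal{N}^{-}_{\lambda^*_{a,b},d^-,c^-}$ into $\hat{\mathcal{N}}_{\lambda}$ for $\lambda$ slightly above $\lambda^*_{a,b}$, boundedness, compact embeddings, $u\in\mathcal{C}^+$) is sound and matches the paper's preliminaries, but your ``decisive step'' has a genuine gap, and it sits exactly where the paper's proof does its real work. Your contradiction compares the projected competitor built from the weak limit $u$ against the \emph{constrained} infimum $\hat{\mathcal{J}}^{-}_{\lambda,d^-,c^-}$. For this to be a contradiction you need two things that weak convergence does not give: first, the projection $t^{-}_{\lambda}(u)$ must exist at all, i.e.\ $\lambda<\lambda_{a,b}(u)$; second, some representative of $u$ on its ray must lie in $K$, since an element of $\mathcal{N}^{-}_{\lambda}$ with energy below $\hat{\mathcal{J}}^{-}_{\lambda,d^-,c^-}$ contradicts nothing unless it is admissible for the constrained problem. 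Neither is available: the constraints $\mathrm{dist}(\cdot,\mathcal{N}^{0}_{\lambda^*_{a,b}})\ge d^-$, $\|\cdot\|_X\le c^-$ and membership in $\mathcal{N}^{-}_{\lambda^*_{a,b}}$ are not weakly closed, and the weak lower semicontinuity of $\lambda_{a,b}$ (Remark \ref{lwsc}) goes the wrong way --- it gives $\lambda_{a,b}(u)\le\liminf_n\lambda_{a,b}(u_n)$, so your step-one bound $\lambda_{a,b}\ge\lambda^*_{a,b}+\delta_0$ on $K$ does not transfer to the weak limit. Your stated remedy, that the slack $c^-_{\lambda^*_{a,b}}<c^-$, $d^-<d^{-}_{\lambda^*_{a,b}}$ ``guarantees that the $\mathcal{N}^{-}_{\lambda^*_{a,b}}$-representative of the competitor still lies in $K$'', is an assertion rather than an argument: that slack only guarantees $\mathcal{M}^{-}_{\lambda^*_{a,b}}\subset K$, and the weak limit of a minimizing sequence at a parameter $\lambda>\lambda^*_{a,b}$ is not known to be near $\mathcal{M}^{-}_{\lambda^*_{a,b}}$ --- proving something of that sort is precisely the content of the lemma.

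The paper closes this gap by a different mechanism, which also explains why the conclusion is only claimed for $\lambda$ in a small right-neighbourhood of $\lambda^*_{a,b}$. It argues by contradiction along a sequence $\lambda_k\downarrow\lambda^*_{a,b}$ for which the weak limits $u(\lambda_k)$ would fail to be in $\hat{\mathcal{N}}_{\lambda_k}$, and proves the Claim \eqref{claim}: the constrained infima $\hat{\mathcal{J}}^{-}_{\lambda,d^-,c^-}$ decrease and converge, as $\lambda\downarrow\lambda^*_{a,b}$, to the \emph{full} Nehari infimum $\hat{\mathcal{J}}^{-}_{\lambda^*_{a,b}}$ at the extremal value (this uses the monotonicity of Lemma \ref{p:2.10}, the inclusion $\mathcal{M}^{-}_{\lambda^*_{a,b}}\subset K$, and crucially Lemma \ref{prop5.6}, i.e.\ the existence of a minimizer at $\lambda=\lambda^*_{a,b}$ from Section 5). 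Hence the projected double sequence $u_{n,k}=t^{-}_{\lambda_k}(u_n(\lambda_k))u_n(\lambda_k)$ is a minimizing sequence for $\hat{\mathcal{J}}^{-}_{\lambda^*_{a,b}}$ itself, and against \emph{that} infimum any projected competitor is automatically admissible, so the weak-lower-semicontinuity argument legitimately yields strong convergence and then the contradiction with $u(\lambda_k)\notin\hat{\mathcal{N}}_{\lambda_k}$. In short: your fixed-$\lambda$ direct method cannot be closed as written; the missing idea is the reduction of the constrained problems at $\lambda\downarrow\lambda^*_{a,b}$ to the unconstrained extremal problem, where admissibility of competitors is free.
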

	
	\begin{proof}
		For each $\lambda\in(\lambda^*_{a,b},\lambda^*_{a,b}+\epsilon)$, take a minimizing sequence $u_n(\lambda)\in \mathcal{N}^{-}_{\lambda^*_{a,b},d^{-},c^-}$ for $\hat{\mathcal{J}}^{-}_{\lambda,d^-,c^-}$. From Proposition \ref{prml}, we have $u_n(\lambda)\in \hat{\mathcal{N}}_{\lambda}$  and we can get $\delta<0$ such that $(t^{-}_{\lambda}(u_n(\lambda)))^2\psi_{\lambda,u_n}(t^{-}_{\lambda}(u_n(\lambda)))<\delta$ for all $\lambda\in(\lambda^*_{a,b},\lambda^*_{a,b}+\epsilon)$. Since $u_n(\lambda)$ is bounded, hence up to a subsequence $u_n(\lambda)\rightharpoonup u(\lambda)$ for some ${u(\lambda)}\not\equiv 0$ in $X$. 
		We target to show that, the weak limit ${u(\lambda)}$ belongs to $ \hat{\mathcal{N}}_{\lambda}$ for all $\lambda\in(\lambda^*_{a,b},\lambda^*_{a,b}+\epsilon^-)$.
		As $u_n(\lambda)\in \hat{\mathcal{N}}_{\lambda}$ for all $\lambda\in(\lambda^*_{a,b},\lambda^*_{a,b}+\epsilon^-)$, there exists unique scalar $t^{-}_{\lambda}(u_n(\lambda))>0$  for all $\lambda\in(\lambda^*_{a,b},\lambda^*_{a,b}+\epsilon^-)$. Moreover, since $t^{-}_{\lambda}(u_n(\lambda))<t^{-}_{\lambda^*_{a,b}}(u_n(\lambda))=1$   we get, $t^{-}_{\lambda}(u_n(\lambda))\rightarrow t({\lambda})\in (0,1)$. We claim that there exists $\epsilon^->0$ such that  $u(\lambda)\in \hat{\mathcal{N}}_{\lambda}$ for all $\lambda\in(\lambda^*_{a,b},\lambda^*_{a,b}+\epsilon^-)$. Suppose on contrary $u(\lambda)\notin \hat{\mathcal{N}}_{\lambda}$, thus there exists sequence $\lambda_k\downarrow \lambda^*_{a,b}$ such that $u(\lambda_k)\notin \hat{\mathcal{N}}_{\lambda_k}$ for large value of $k$. To get contradiction for this fact, assuming a minimizing  sequence $u_{n,k}\equiv t^{-}_{\lambda_k}(u_n(\lambda_k))u_n(\lambda_k)$ for $\hat{\mathcal{J}}^{-}_{\lambda_k,d^{-},c^-}$ for large $k\in \mathbb N$ (using $\eqref{eq 6.1}$) and  we show that it is also a minimizing sequence for $\hat{\mathcal{J}}^{-}_{\lambda^*_{a,b}}$. In the squeal we first prove following claim.
		
		\textit{Claim-} When $\lambda\downarrow\lambda^*_{a,b}$ function $\hat{\mathcal{J}}^{-}_{\lambda,d^{-},c^-}$ is decreasing and 
		\begin{equation}\label{claim}
			\displaystyle \lim_{\lambda\downarrow \lambda^*_{a,b}}\hat{\mathcal{J}}^{-}_{\lambda,d^{-},c^-}=\hat{\mathcal{J}}^{-}_{\lambda^*_{a,b}}.
		\end{equation}
		In order to prove this claim, using decreasing behaviour of $\mathcal{J}^{-}_{\lambda}(u)$ from  Proposition \eqref{p:2.10} $(ii)$ for all $u\in \hat{\mathcal{N}}^{-}_{\lambda^*_{a,b},d^{-},c^-}$, we have
		$\hat{\mathcal{J}}^{-}_{\lambda,d^{-},c^-}\leq\hat{\mathcal{J}}^{-}_{\lambda',d^{-},c^-}  $, where $\lambda^*_{a,b}<\lambda'<\lambda<\lambda^*_{a,b}+\epsilon$. Also when  $u_{\lambda^*_{a,b}}\in \mathcal{M}^-_{\lambda^*_{a,b}}$ we have $\hat{\mathcal{J}}^{-}_{\lambda,d^{-},c^-}\leq \mathcal{J}^{-}_{\lambda}(u_{\lambda^*_{a,b}})<\mathcal{J}^{-}_{\lambda^*_{a,b}}(u_{\lambda^*_{a,b}})=\hat{\mathcal{J}}^{-}_{\lambda^*_{a,b}}$ for all $\lambda\in (\lambda^*_{a,b},\lambda^*_{a,b}+\epsilon)$. 
		To prove \eqref{claim} suppose on contrary that there exists a sequence  $\lambda_{n}\downarrow \lambda^*_{a,b}$ or $\lambda_{n}\in (\lambda^*_{a,b},\lambda^*_{a,b}+\epsilon)$  for large $n$, such that
		\begin{equation*}
			\lim_{n\rightarrow\infty}\hat{\mathcal{J}}^{-}_{\lambda_n,d^{-},c^-}=\mathcal{J}<\hat{\mathcal{J}}^{-}_{\lambda^*_{a,b}}.
		\end{equation*}
		Also from equation \eqref{eq 6.1} we can have a sequence $\{u_k\}\subset \mathcal{N}^{-}_{\lambda^*_{a,b},d^-,c^-}$ such that for a given $\epsilon>0$ there exists $n_0,k_0\in \mathbb{N}$ such that 
		\begin{equation}\label{eq 6.4}
			|\mathcal{J}^{-}_{\lambda_n}(u_k)-\hat{{\mathcal{J}}}^-_{\lambda_{n},d^{-},c^-} |<\frac{\epsilon}{2},  \ \ \textrm{for all} \  n\geq n_0, \ k\geq k_0 . 
		\end{equation}
		Using continuity of $t^{-}_{\lambda}$ for $\lambda\in(\lambda^*_{a,b},\lambda^*_{a,b}+\epsilon)$ following Lemma \ref{p:2.10} $(i)$,
		we have
		\begin{equation}\label{eq 6.5}
			|\mathcal{J}^{-}_{\lambda_n}(u_k)-\mathcal{J}^{-}_{\lambda^*_{a,b}}(u_k) |<\frac{\epsilon}{2},  \ \ \textrm{for all} \  n\geq n_1 . 
		\end{equation}
		Taking  $n\geq n_2=\textrm{max}\{n_0,n_1\}$ and using \eqref{eq 6.4} and \eqref{eq 6.5}, we get
		\begin{align*}\hat{\mathcal{J}}^{-}_{\lambda^*_{a,b}}-\hat{\mathcal{J}}^{-}_{\lambda_n,d^-,c^-}&<\mathcal{J}^{-}_{\lambda^*_{a,b}}(u_k)-\hat{\mathcal{J}}^{-}_{\lambda_n,d^-,c^-}\\
			&\leq |\mathcal{J}^{-}_{\lambda^*_{a,b}}(u_k)-\mathcal{J}^{-}_{\lambda_n}(u_k)|+|\mathcal{J}^{-}_{\lambda_n}(u_k)-\hat{\mathcal{J}}^{-}_{\lambda_n,d^-,c^-}|\\
			&\leq \frac{\epsilon}{2}+\frac{\epsilon}{2}=\epsilon,
		\end{align*}
		implies, 
		\[\hat{\mathcal{J}}^{-}_{\lambda^*_{a,b}}< \hat{\mathcal{J}}^{-}_{\lambda_n,d^-,c^-}+\epsilon \ \textrm{for} \  \textrm{all} \ n\geq n_2.\] 
		Therefore, $\hat{\mathcal{J}}^{-}_{\lambda^*_{a,b}}\leq \mathcal{J}$ as $n\rightarrow \infty$  a contradiction. Thus $\lim_{\lambda\downarrow \lambda^*_{a,b}}\hat{\mathcal{J}}^{-}_{\lambda,d^{-},c^-}=\hat{\mathcal{J}}^{-}_{\lambda^*_{a,b}}$, which complete the proof of the claim.
		
		As a consequence of this claim, we have \begin{equation}\label{eq 5.61}
			|\hat{\mathcal{J}}^{-}_{\lambda^*_{a,b}}-\mathcal{J}^{-}_{\lambda_k}(u_{n,k})| \leq |\hat{\mathcal{J}}^{-}_{\lambda^*_{a,b}}-\hat{\mathcal{J}}^{-}_{\lambda_k,d^-,c^-}| +|\mathcal{J}^{-}_{\lambda_k}(u_{n,k})-\hat{\mathcal{J}}^{-}_{\lambda_k,d^-,c^-}|\rightarrow0
		\end{equation}
		
		as $n\rightarrow\infty$ followed by $k\to \infty$.  Therefore, up to a subsequence, $	u_{{n,k}} \rightharpoonup u$ in $X\setminus\{0\}$. 
		Now we claim that $u_{n,k}\rightarrow u$ in $X\setminus\{0\}$. Suppose on contrary,  by weak lower semi continuity of norm, we get
		\[\liminf_{n,k\rightarrow\infty}\psi'_{\lambda_{k},u_{n,k}}(t_{\lambda^*_{a,b}}(u))>\psi'_{\lambda^*_{a,b},u}(t_{\lambda^*_{a,b}}(u))=0.\]
		Hence $ t_{\lambda^*_{a,b}}(u)<t^{-}_{\lambda_{k}}(u_{n,k})$ for sufficiently large $n,k$. Therefore, from (\ref{eq 5.61}), we get
		\begin{align*}
			\mathcal E_{\lambda^*_{a,b}}(t_{\lambda^*_{a,b}}(u)u)&<\liminf_{n, k\to \infty}\mathcal E_{\lambda_k}(t_{\lambda^*_{a,b}}(u)u_{n, k}\\&<\liminf_{n, k\to \infty}\mathcal E_{\lambda_k}(t^{-}_{\lambda_k}(u_{n,k}))u_{n, k}=\hat{\mathcal{J}}^{-}_{\lambda^*_{a,b}}\end{align*}
		which is an absurd, therefore $u_{n,k}\rightarrow u$ in $X\setminus\{0\}$.
		Now,
		$u_{n,k}-u(\lambda) \rightharpoonup u(\lambda_k)-u(\lambda)$, implies
		\[\|u(\lambda_k)-u(\lambda)\|_{X}\leq \liminf_{n\rightarrow\infty} \|u_{n,k}-u(\lambda)\|_{X},\]
		for large $k$. Thus  $u(\lambda_k)\to u(\lambda)\in \mathcal{N}^{-}_{\lambda^*_{a,b},d^-,c^-}$ and consequently  $u(\lambda_k)\in \hat{\mathcal{N}}_{\lambda_k}$ for large $k$, which is a contradiction. Thus $u_n(\lambda)\rightarrow u(\lambda)$ in $X$ and $ u(\lambda)\in \hat{\mathcal{N}}_{\lambda}$ for all $\lambda\in(\lambda^*_{a,b},\lambda^*_{a,b}+\epsilon^-)$.
		{Subsequently, $t^{-}_{\lambda}(u_n(\lambda))u_n(\lambda)\rightarrow t(\lambda)u(\lambda)$  and $u(\lambda)\in \mathcal{N}^{-}_{\lambda^*_{a,b},d^-,c^-}$} with
		\[\mathcal{J}^{-}_{\lambda,d^-,c^-}=\mathcal{J}^{-}_{\lambda}(u(\lambda))\]
		that is, $u(\lambda)$ is minimizer for $\mathcal{J}^{-}_{\lambda,d^-,c^-}$ for all $\lambda\in(\lambda^*_{a,b},\lambda^*_{a,b}+\epsilon^-)$.
		
	\end{proof}

	\begin{lemma}
		For the above choices of $c^+,d^{+}$ there exits $\epsilon^{+}>0$ such that the functional $\mathcal{J}^{+}_{\lambda}$ constrained to $ \mathcal{N}^{+}_{\lambda^*_{a,b},d^+,c^+}$ has a minimizer ${v({\lambda})}\in$  $\mathcal{N}^{+}_{\lambda^*_{a,b},d^+,c^+}$ for all $\lambda\in(\lambda^*_{a,b},\lambda^*_{a,b}+\epsilon^+)$.
	\end{lemma}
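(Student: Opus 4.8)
The argument is the exact analogue of Lemma~\ref{prop6.1}, with the minus decomposition replaced by the plus one and the projection $t^{-}_\lambda$ replaced by $t^{+}_\lambda$ (hence by $s_{\lambda^*_{a,b}}$ in the limit). First I would fix $\lambda\in(\lambda^*_{a,b},\lambda^*_{a,b}+\epsilon)$ and pick a minimizing sequence $v_n(\lambda)\in\mathcal{N}^{+}_{\lambda^*_{a,b},d^+,c^+}$ for $\hat{\mathcal{J}}^{+}_{\lambda,d^+,c^+}$. By the coercivity of $\mathcal{J}^{+}_\lambda$ on the Nehari set this sequence is bounded, so up to a subsequence $v_n(\lambda)\rightharpoonup v(\lambda)$ with $v(\lambda)\not\equiv0$ (the lower bound $\|v_n\|_X\ge c^+$ rules out the trivial limit once strong convergence is in hand). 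Since $\textrm{dist}(v_n(\lambda),\mathcal{N}^{0}_{\lambda^*_{a,b}})\ge d^+>0$, Proposition~\ref{prml}$(ii)$ guarantees that for $\epsilon$ small each $v_n(\lambda)$ lies in $\hat{\mathcal{N}}_\lambda\cup\hat{\mathcal{N}}^{+}_\lambda$, so the projection $t^{+}_\lambda(v_n(\lambda))$ is well defined; because $\lambda\mapsto t^{+}_\lambda$ is increasing (Lemma~\ref{p:2.10}$(i)$) and $t^{+}_{\lambda^*_{a,b}}(v_n)=1$, these projections satisfy $t^{+}_\lambda(v_n(\lambda))>1$, which is the one sign difference from the minus case.

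Next I would establish the plus analogue of the claim in Lemma~\ref{prop6.1}: the map $\lambda\mapsto\hat{\mathcal{J}}^{+}_{\lambda,d^+,c^+}$ is decreasing on $(\lambda^*_{a,b},\lambda^*_{a,b}+\epsilon)$ with
\[
\lim_{\lambda\downarrow\lambda^*_{a,b}}\hat{\mathcal{J}}^{+}_{\lambda,d^+,c^+}=\hat{\mathcal{J}}^{+}_{\lambda^*_{a,b}}.
\]
Monotonicity is immediate from the decreasing behaviour of $\mathcal{J}^{+}_\lambda$ in Lemma~\ref{p:2.10}$(ii)$ together with the inclusion $\mathcal{M}^{+}_{\lambda^*_{a,b}}\subset\mathcal{N}^{+}_{\lambda^*_{a,b},d^+,c^+}$. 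For the limit I would run the same $\epsilon/2$ contradiction argument as in the minus case: pick a sequence $\lambda_n\downarrow\lambda^*_{a,b}$ with $\lim\hat{\mathcal{J}}^{+}_{\lambda_n,d^+,c^+}=\mathcal J<\hat{\mathcal{J}}^{+}_{\lambda^*_{a,b}}$, choose an admissible $u_k$ nearly realizing $\hat{\mathcal{J}}^{+}_{\lambda_n,d^+,c^+}$, and use the continuity of $t^{+}_\lambda$ (Lemma~\ref{p:2.10}$(i)$) to get $|\mathcal{J}^{+}_{\lambda_n}(u_k)-\mathcal{J}^{+}_{\lambda^*_{a,b}}(u_k)|<\epsilon/2$, producing the contradiction $\hat{\mathcal{J}}^{+}_{\lambda^*_{a,b}}\le\mathcal J$.

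With the claim in hand I would argue by contradiction that $v(\lambda)\in\hat{\mathcal{N}}_\lambda\cup\hat{\mathcal{N}}^{+}_\lambda$ for all $\lambda$ in some $(\lambda^*_{a,b},\lambda^*_{a,b}+\epsilon^+)$. If not, along some $\lambda_k\downarrow\lambda^*_{a,b}$ the double sequence $v_{n,k}:=t^{+}_{\lambda_k}(v_n(\lambda_k))\,v_n(\lambda_k)$ is, by the claim and an estimate analogous to \eqref{eq 5.61}, a minimizing sequence for $\hat{\mathcal{J}}^{+}_{\lambda^*_{a,b}}$. I would then upgrade $v_{n,k}\rightharpoonup v$ to strong convergence: were the convergence not strong, weak lower semicontinuity of $\|\cdot\|_X$ would give $\liminf\psi'_{\lambda_k,v_{n,k}}(s_{\lambda^*_{a,b}}(v))>\psi'_{\lambda^*_{a,b},v}(s_{\lambda^*_{a,b}}(v))=0$, forcing $s_{\lambda^*_{a,b}}(v)<t^{+}_{\lambda_k}(v_{n,k})$ and hence $\mathcal{E}_{\lambda^*_{a,b}}(s_{\lambda^*_{a,b}}(v)v)<\hat{\mathcal{J}}^{+}_{\lambda^*_{a,b}}$, an absurdity. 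Strong convergence then places the limit in the closed set $\mathcal{N}^{+}_{\lambda^*_{a,b},d^+,c^+}$ and, passing to $v(\lambda_k)$, contradicts $v(\lambda_k)\notin\hat{\mathcal{N}}_{\lambda_k}\cup\hat{\mathcal{N}}^{+}_{\lambda_k}$; thus $v_n(\lambda)\to v(\lambda)$ strongly and $v(\lambda)$ is the required minimizer.

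The main obstacle, as in the minus case, is controlling the weak limit so that it does not collapse either onto $\mathcal{N}^{0}_{\lambda^*_{a,b}}$ or onto $0$: the constraint $\textrm{dist}(\cdot,\mathcal{N}^{0}_{\lambda^*_{a,b}})\ge d^+$ and the lower bound $\|\cdot\|_X\ge c^+$ must survive the limit. This is exactly where the strict separation $\textrm{dist}(\mathcal{M}^{+}_{\lambda^*_{a,b}},\mathcal{N}^{0}_{\lambda^*_{a,b}})>0$ (ultimately Proposition~\ref{cor,2.7}) and the strong convergence just proved are essential, together with the selection of the correct fibering branch $s_{\lambda^*_{a,b}}$ dictated by the increasing monotonicity of $t^{+}_\lambda$.
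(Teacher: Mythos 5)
Your proposal follows exactly the route the paper takes: the paper's own proof of this lemma is a one-line deferral to Lemma~\ref{prop6.1}, and your write-up is precisely that minus-case argument transplanted to the plus decomposition, with the correct observation that $\lambda\mapsto t^{+}_{\lambda}$ is increasing, so the projections satisfy $t^{+}_{\lambda}(v_n(\lambda))>1$ rather than $<1$.

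One step is stated backwards, although the overall structure survives. For the plus branch the fibering map $\psi_{\lambda_k,v_{n,k}}$ is decreasing on $(0,t^{+})$ and increasing on $(t^{+},t^{-})$, so the conclusion $\liminf\psi'_{\lambda_k,v_{n,k}}(s_{\lambda^*_{a,b}}(v))>0$ forces $s_{\lambda^*_{a,b}}(v)>t^{+}_{\lambda_k}(v_{n,k})=1$, not $s_{\lambda^*_{a,b}}(v)<t^{+}_{\lambda_k}(v_{n,k})$ as you wrote; your inequality is the one appropriate to the minus branch, where $t^{-}$ is a fiber maximum. Moreover, with the corrected inequality the absurdity does not follow from the projection comparison alone (the literal minus-case chain of inequalities reverses direction, since $t^{+}$ minimizes rather than maximizes the fiber map); it is completed instead as in the plus-branch argument of Lemma~\ref{l3.1}, by using that $s_{\lambda^*_{a,b}}(v)$ is a fiber minimum for $\psi_{\lambda^*_{a,b},v}$: one gets $\hat{\mathcal{J}}^{+}_{\lambda^*_{a,b}}\le \mathcal{E}_{\lambda^*_{a,b}}(s_{\lambda^*_{a,b}}(v)v)\le \mathcal{E}_{\lambda^*_{a,b}}(v)<\liminf_{n,k\to\infty}\mathcal{E}_{\lambda_k}(v_{n,k})=\hat{\mathcal{J}}^{+}_{\lambda^*_{a,b}}$, where the strict inequality is weak lower semicontinuity of the norm under the assumption that the convergence is not strong. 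With this repair your proof is complete and coincides with what the paper intends.
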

	\begin{proof} The proof of the Lemma is similar to Lemma \ref{prop6.1} above.
	\end{proof}
	Now we choose  $\epsilon=\textrm{min}\{\epsilon^{-},\epsilon^+\}$ and rename the minimizers obtained above as $u_\lambda=u(\lambda)$ and $v_\lambda=v(\lambda)$ for all $\lambda\in (\lambda^*_{a,b}, \lambda^*_{a,b}+\epsilon)$.  For $\lambda\in (\lambda^*_{a,b},\lambda^*_{a,b}+\epsilon)$ the minimization problem posed in  $\mathcal{N}^{\pm}_{\lambda^*_{a,b},d^\pm,c^{\pm}}$ ensures that  minimizers $u_\lambda$ and $v_\lambda$ are  separated from $\mathcal{N}^{0}_{\lambda^*_{a,b}}$. Therefore, for $\epsilon>0$ sufficiently small chosen above,  $u_\lambda\in \mathcal{N}^-_{\lambda}$ and  $v_\lambda\in \mathcal{N}^+_{\lambda}$ for $\lambda\in (\lambda^*_{a,b},\lambda^*_{a,b}+\epsilon)$. Finally, invoking Theorem 2.3 of \cite{kj}, we get nontrivial weak solutions of \eqref{p} for $\lambda\in (\lambda^*_{a,b},\lambda^*_{a,b}+\epsilon)$. 
	
	\textbf{Proof of Theorem \ref{T1} and Theorem \ref{T2} :} The proof of Theorem  \ref{T1} and Theorem \ref{T2} follows by  combining the arguments of previous sections. Eventually, we get the nontrivial weak solutions of \eqref{p} for $\lambda\in (0,\lambda^*_{a,b}+\epsilon)$. Next, we show that the weak solutions are positive. In fact, due to the presence of nonlocal fractional operator, we have $\|u\|\neq \| \ |u| \ \|$ in $X$. As a consequence, $\mathcal{E}_{\lambda}(|u|)\neq \mathcal{E}_{\lambda}(u)$. To overcome this situation, we define the perturbed problem with nonlinearity as $\lambda f(x)(u^+)^{\gamma-1}+g(x)(u^+)^{p-1}$ and corresponding energy functional as follows
	\[\mathcal{E}^+_{\lambda}(u):=\frac{a}{2}\|u\|^2_{X}+\frac{b}{2\theta}\|u\|^{2\theta}_{X}-\frac{\lambda}{\gamma}\int_\Omega f(u^+)^{\gamma} dx -\frac{1}{p}\int_\Omega g(u^+)^p dx.\]
	It is easy to see that the critical points of $\mathcal{E}_{\lambda}$ are also the critical points of $\mathcal{E}^+_{\lambda}$. Consequently the weak solutions of the perturbed problem. Now testing the solutions of perturbed problem with test function $\phi=u^{-}$ and using
	\[(u(x)-u(y))(u^{-}(x)-u^{-}(y))\leq -|u^{-}(x)-u^{-}(y)|^2\]
	we get, $\|u^-\|^2_{X}=0$, thus $u$ is a non-negative solution of \eqref{p}. The positivity of the solutions follows via maximum principle (see \cite{silv}, Proposition 2.2.8) which completes the proof.
	
	\section*{\small Acknowledgments}
		The research of the first author is supported by Science and Engineering Research Board, Govt. of India,   grant SRG/2021/001076.	
	\section*{\small
		Conflict of interest} 
	
	{\small
		The authors declare that they have no conflict of interest.}
	
	
	
	
%


\begin{thebibliography} {99}
		\normalsize 
		\bibitem{Ambrosetti}  Ambrosetti, A., Brezis, H.,  Cerami, G.: {Combined effects of concave and convex nonlinearities in some elliptic problems.} {J. Funct. Anal.} \textbf{122,} 519-543 (1994)
		
		\bibitem{Azorero} Ambrosetti, A., Garcia-Azorero J., Peral, T.: Existence and multiplicity results for some nonlinear elliptic equations. A survey. {Rend. Mat. Appl.}  \textbf{20,} 167–198 (2000)
		
		\bibitem{Autuori}  Autuori, G.,  Fiscella, A.,  Pucci, P.: Stationary Kirchhoff problems involving a
		fractional elliptic operator and a critical nonlinearity. {Nonlinear Anal.} \textbf{125,}  699–714 (2015)
		\bibitem{Barrios} Barrios, B.,  Colorado, E.,  Servadei, R.,  Soria, F.: A critical fractional
		equation with concave-convex power nonlinearities. {Ann. Inst. H. Poincar\'{e}.} \textbf{32,} 875–900 (2015)
		
		
		\bibitem{Binlin}  Binlin, Z.,  Fiscella A.,   Liang S.: Infinitely many solutions for critical degenerate Kirchhoff type equations involving the fractional $p-$Laplacian. {Appl. Math. Optim.} \textbf{80,}  63–80 (2019) 
		
		\bibitem{kj}  Brown, K. J.,   Zhang, Y.: The Nehari manifold for a semilinear elliptic problem with a sign changing weight function. {J. Differential Equations.} \textbf{193,} 481-499  (2003)
		
		\bibitem{chen}  Chen, C.,  Kuo, Y.,  Wu, T.: {The Nehari manfiold for Kirchhoff problem involving sign-changing weight functions}. {J. Differential Equations}.  \textbf{250,} 1876-1908 (2011)
		
		
		\bibitem{NA2} Fiscella, A.,  Mishra, P.K.: The Nehari manifold for fractional Kirchhoff problems
		involving singular and critical terms. {Nonlinear Anal.}  \textbf{186,} 6–32 (2019)
		
		\bibitem{Valdinoci}  Fiscella, A.,  Valdinoci, E.: A critical Kirchhoff type problem involving a nonlocal operator.  {Nonlinear Anal.} \textbf{94,} 156–170 (2014)
		
		
		\bibitem{sarika} Goyal, S., Sreenadh, K.: Nehari manifold for non-local elliptic operator with concave–convex nonlinearities and sign-changing weight functions.  {Proc. Indian Acad. Sci. Math. Sci.} \textbf{125,} 545–558 (2015)
		
		\bibitem{Poho}  Pokhozhaev, S. I.: The fibration method for solving nonlinear boundary value problems. {Tr. Mat. Inst. Steklova.}  \textbf{192,} 146-163 (1990)
		
		\bibitem{IL2}  Il’yasov, Y.: On extreme values of Nehari manifold method via nonlinear Rayleigh’s quotient.  {Topol. Methods Non- linear Anal.} \textbf{49,} 683–714 (2017)
		\bibitem{Il'yasov}  Il'yasov, Y.,  Silva, K.: {On the branch of positive solution for $p-$Laplacian problem at the extreme value of the Nehari manifold method.}  {Proc. Amer. Math. Soc.}  \textbf{146,}  2924-2935 (2018)
		
		\bibitem{Kirchhoff}  Kirchhoff,  G.: Mechanik, Teubner, Leipzig, 1883
		
		\bibitem{N1} Nehari, Z.: Characteristic values associated with a class of non-linear second-order differential equations.  {Acta Math.}  \textbf{105,} 141–175 (1961)  https://doi .org /10 .1007 /BF02559588
		
		\bibitem{N2}  Nehari, Z.: On a class of nonlinear second-order differential equations. {Trans. Amer. Math. Soc.} \textbf{95,} 101–123 (1960)
		
		\bibitem{Hitchhker}  Nezza, E. D.,  Palatucci, G.,  Valdinoci, E.: Hitchhiker's guide to the fractional Sobolev space.  {Bull. Sci. Math.} \textbf{136,} 521-573 (2012)
		
		\bibitem{mana}  do \'{O}, J. M.,  He, X.,   Mishra, P.K.: {Fractional Kirchhoff problem with critical indefinite nonlinearity.}  {Math. Nachr.}  \textbf{292,} 615-632 (2019)
		.
		
		\bibitem{PUCCI}  Pucci, P.,   Rădulescu, V.D.: {Progress in Nonlinear Kirchhoff Problems [Editorial].} {Nonlinear Anal.} \textbf{186,} 1–5 (2019)
		
		\bibitem{saldi}  Pucci, P., Saldi, S.: Critical stationary Kirchhoff equations in $\mathbb{R}^N$ involving nonlocal operators. {Rev. Mat. Iberoam.} \textbf{32,} 1–22 (2016)
		
		\bibitem{pucci}  Pucci, P.,  Xiang, M., Zhang, B.: Existence and multiplicity of entire solutions for fractional p-Kirchhoff equations. {Adv. Nonlinear Anal.} \textbf{5,} 27–55 (2016)
		
		
		
		\bibitem{Val1}	 Servadei, R.,   Valdinoci, E.: The Brezis-Nirenberg result for the fractional Laplacian. {Trans.Amer. Math. Soc.} \textbf{367,} 67-102 (2015)
		
		\bibitem{Silva}  Silva, K., Macedo, A.: {Local minimizer over the Nehari manifold  for a class of concave-convex probelms with sign changing nonlinearity. } {J. Differential Equations.} \textbf{265,}  1894-1921 (2018)
		
		
		\bibitem{silv}  Silvestre, L.: Regularity of the obstacle problem for a fractional power of the Laplace operator. {Comm. Pure Appl. Math.} \textbf{60,} 67-112 (2007)
		
		
		
		\bibitem{tfw} Wu, T. F.: Multiplicity results for a semilinear elliptic equation involving sign-changing weight function. {Rocky Mountain J. Math.} \textbf{39,}  995–1012 (2009)
		
		\bibitem{Wu}   Wu, T. F.: On semilinear elliptic equations involving concave-convex nonlinearities and sign-changing weight function. {J. Math. Anal. Appl.} \textbf{318,} 253–270 (2006)
		
		
		\bibitem{Molica}  Xiang, M.Q.,  Molica Bisci, G.,   Tian, G.H.,  Zhang, B.L.: Infinitely many solutions for the stationary Kirchhoff problems involving the fractional $p-$Laplacian.
		{Nonlinearity.} \textbf{29,} 357–374 (2016)
		%
		%
		%
		%
		%
		%
		%
		
		
	\end{thebibliography}
\end{document}